\newtheorem{Theorem}{Theorem}[section]
\newtheorem{Lemma}[Theorem]{Lemma}
\newtheorem{Corollary}[Theorem]{Corollary}
\newtheorem{Proposition}[Theorem]{Proposition}
\newcommand\supp{\mathop{\rm supp}}
\theoremstyle{definition}
\newtheorem{Remark}[Theorem]{Remark}
\begin{document}
	
	\title{On K\"othe duals of Orlicz-Lorentz spaces}
	\keywords{Orlicz-Lorentz space, K\"othe dual, level function, separability, Banach function space}
	\subjclass[2010]{Primary 46B04; Secondary 46B20, 46E30, 47B38}
	
	\author{Anna Kami\'{n}ska}
	\address{Department of Mathematical Sciences,
		The University of Memphis, TN 38152-3240}
	\email{kaminska@memphis.edu}
	
	\author{Hyung-Joon Tag}
	\address{Department of Mathematics Education, Dongguk University, Seoul, Republic of Korea}
	\email{hjtag4@gmail.com}
	\date{\today}
	
	\maketitle

	\begin{abstract}
		In this article, we study a number of properties of the K\"othe duals $\mathcal{M}_{\varphi,w}$ of Orlicz-Lorentz spaces. An explicit description of the order-continuous subspace of $\mathcal{M}_{\varphi,w}$ is provided. Moreover, the separability of these spaces are characterized by the growth condition $\Delta_2$. Consequently, the K\"othe dual space $\mathcal{M}_{\varphi,w}$ has the Radon-Nikod\'ym property if and only if the N-function at infinity $\varphi$ satisfies the appropriate $\Delta_2$-condition. The comparison between $\mathcal{M}_{\varphi,w}$ spaces is characterized via standard orders between Orlicz functions. As applications of these results, we provide sufficient conditions for M-embedded order-continuous subspaces of Orlicz-Lorentz spaces equipped with the Luxemburg norm and prove the existence of a unique norm-preserving extension on Orlicz-Lorentz spaces equipped with the Orlicz norm.
	\end{abstract}

\section{Introduction}

In this article, we investigate basic properties of the K\"othe duals $\mathcal{M}_{\varphi,w}$ of Orlicz-Lorentz spaces and their applications to M-embeddedness and the uniqueness of norm-preserving extensions of bounded linear functionals on Orlicz-Lorentz spaces. The Orlicz-Lorentz spaces appeared as a natural generalization of both Orlicz and Lorentz spaces \cite{K}. Since then, some geometrical properties e.g.  separability, rotundity   or nonsquareness \cite{K, FHK} of these spaces have been studied. However, comparing to Orlicz spaces,  many properties of Orlicz-Lorentz spaces have not been investigated   because the explicit description of their K\"othe duals was not available until later. Moreover, these K\"othe duals exhibit different behaviors from those in the case of Orlicz spaces. It is well-known that the modular functionals for both Orlicz spaces and their K\"othe duals have the same structure, so we can study those spaces within the same framework. However, as it turns out, the analogous claim does not hold for Orlicz-Lorentz spaces.

To elaborate this more, in 2013, the first author and Raynaud introduced a new modular $P_{\varphi,w}$ \cite{KR2} and proved, with Le\'snik, that the class of Banach function spaces defined by $P_{\varphi_*,w}$ is actually the K\"othe dual spaces of Orlicz-Lorentz spaces $\Lambda_{\varphi,w}$ \cite{KLR}. The modular  $\rho_{\varphi, w}(f)$ of Orlicz-Lorentz space is based on the decreasing rearrangement  $f^*$ of $f$, while the modular $P_{\varphi,w}(f)$ is defined via $f^*/v$ where $v$ is a decreasing function submajorized by a weight function $w$ (i.e. $v\prec w$), or the level function $f^0$ of $f$ with respect to $w$. This major difference shows us that the modular $\rho_{\varphi, w}$ exhibits a different behavior from the modular $P_{\varphi,w}$. As a matter of fact, while the modular $\rho_{\varphi, w}$ is orthogonally subadditive, the modular $P_{\varphi,w}$ is orthogonally superadditive, and so constructing suitable functions to verify certain properties of the space $\mathcal{M}_{\varphi,w}$ requires different approaches from Orlicz-Lorentz spaces. The space $\mathcal{M}_{\varphi,w}$ has also been studied in a spirit of abstract Lorentz spaces. For interested readers, we refer to \cite{KR}.

From the fact that the space $\mathcal{M}_{\varphi,w}$ is not an Orlicz-Lorentz space, the properties of this space needs to be studied separately. Such attempts have been done partially in the past. For instance, a sufficient condition for the order-continuity of $\mathcal{M}_{\varphi,w}$ has been provided in \cite{FLM}. In the same paper, it is shown that the space $\mathcal{M}_{\varphi,w}$ is never rotund unless $w$ is a constant function. This consequently shows that Orlicz-Lorentz spaces are never smooth if its complementary function satisfies the appropriate $\Delta_2$-condition. Moreover, the explicit description of the space $\mathcal{M}_{\varphi,w}$ allows us to explore geometrical and ideal structures of Orlicz-Lorentz spaces $\Lambda_{\varphi,w}$. It is well-known that the order-continuous subspaces of Orlicz-Lorentz spaces equipped with the Luxemburg norm are M-ideals \cite{KLT2}. Moreover, the Orlicz-Lorentz spaces with the diameter two properties can be characterized by the $\Delta_2$-condition \cite{KLT3}. For the diameter two properties, it was essential to compute the fundamental function of the space $\mathcal{M}_{\varphi,w}$ in full generality. Since the space $\mathcal{M}_{\varphi,w}$ has appeared recently, many properties have not been investigated yet. We attempt to study some of those in the article.   

The paper consists of seven parts. In sections 2 and 3 we recall necessary facts on Orlicz-Lorentz spaces and their K\"othe duals. In section 4, from an explicit description of the order-continuous subspace of $\mathcal{M}_{\varphi,w}$ (Theorem \ref{Order}), we show that the separability of the space $\mathcal{M}_{\varphi,w}$ is characterized via the $\Delta_2$-condition (Theorem \ref{th:copy}). This consequently provides a sufficient condition for a class of $\mathcal{M}_{\varphi,w}$ to have the Radon-Nikod\'ym property (Theorem \ref{th:RNPM}). In section 5, we characterize the inclusion operators between the spaces $\mathcal{M}_{\varphi,w}$ defined by different Orlicz functions (Theorem \ref{th:comparison}). In section 6, we identify an interval for the Orlicz norm on $\mathcal{M}_{\varphi,w}$ that attains its value (Theorem \ref{th:OrliczM}). As applications in section 7, we study the M-embeddedness of the order-continuous subspaces of Orlicz-Lorentz spaces (Theorem \ref{MembedS}) and the uniqueness of norm-preserving extensions of bounded linear functionals on Orlicz-Lorentz spaces equipped with the Orlicz norm (Theorem \ref{existext}, \ref{ext}, and \ref{everyext}).
 
\section{Preliminaries}
Here we recall some facts from the theory of Banach function lattices. Let us denote $(\Omega, \Sigma, \mu)$ as a measure space $\Omega$ where $\Sigma$ is a $\sigma$-algebra defined on $\Omega$ and $\mu$ a $\sigma$-finite measure. The space of all $\mu$-measurable functions over a measure space $\Omega$ is denoted by $L_0(\Omega)$. The notation ``$\mu$-a.e." stands for almost everywhere convergence with respect to a measure $\mu$, but we simply denote ``a.e." when $\mu$ is the Lebesgue measure $m$. A Banach space $(X, \|\cdot\|_X) \subset L_0(\Omega)$ is said to be a {\it Banach function lattice} if for any $f\in L_0(\Omega)$ and $g\in X$, if  $0 \leq |f| \leq |g|$ $\mu$-a.e. then $f\in X$ and $\|f\|_X \leq \|g\|_X$.  We say a Banach function lattice has the {\it Fatou property} if for every sequence $(f_n)_{n=1}^{\infty} \subset X$ such that $0 \leq |f_n| \uparrow |f|$ $\mu$-a.e. and $\sup\|f_n\|_X < \infty$ we have $f \in X$ and $\|f_n\|_X \rightarrow \|f\|_X$.

For a Banach function lattice $X$, a function $f \in X$ is said to be {\it order-continuous} if $\|f_n\|_X \downarrow 0$ for every sequence $(f_n)_{n=1}^{\infty}$ of measurable functions such that $f_n \leq |f|$ for every $n \in  \mathbb{N}$ and $f_n\downarrow 0$ $\mu$-a.e. \cite[Proposition 1.3.5]{BS}. In this article, the term {\it simple function} stands for a function that has finitely many values and support of finite measure.
The set of all order-continuous elements in $X$ is denoted by $X_a$. The closure of the set of all simple functions is denoted by $X_b$. In general, we have $X_a \subset X_b$ \cite[Theorem 1.3.11]{BS}. A measure $\mu$ is separable if there exists a countable family $\mathcal{F}$ of measurable subsets such that for every $\epsilon > 0$ and $E \in \Sigma$ with $\mu E < \infty$, there exists a measurable subset $A \in \mathcal{F}$ such that $\mu(A \Delta E) < \epsilon$, where $A \Delta E$ is the symmetric difference between $A$ and $E$. It is well-known that $X_a$ is separable if and only if the measure $\mu$ is separable \cite[Theorem 1.5.5]{BS}. Hence, under our assumption on the measure space in this article, the order-continuous Banach function lattices are always separable.

The {\it K\"othe dual} $X' \subset L_0(\Omega)$ of a Banach function lattice $X$ is the set of $\mu$-measurable functions such that for each $g \in X'$ the associated norm $\|g\|_{X'}$ defined by
\[
\|g\|_{X'} = \sup\left\{\left|\int_\Omega gf\right|: \|f\|_X \leq 1\right\}
\]
is finite. When the order-continuous subspace $X_a$ contains all simple functions, we have $X_a = X_b$ and the K\"othe dual $X' \simeq (X_a)^*$ \cite[Corollary 1.4.2]{BS}. For sequence spaces, $X_a = X_b$ always holds \cite[Theorem 1.3.13]{BS}. 

For $\lambda > 0$, the {\it distribution function} $d_f(\lambda)$ of $f \in L_0(\Omega)$ is defined by $d_f(\lambda) = \mu\{t \in \Omega: |f(t)| > \lambda\}$. If $d_f(\lambda) = d_g(\lambda)$ for all $\lambda > 0$, then we say $f$ and $g$ are {\it equimeasurable}. For $f \in L_0(\Omega)$, the function $f^*(t) = \inf\{\lambda > 0: d_f(\lambda) \leq t\}$  is called the {\it decreasing rearrangement} of $f$, and $f^*$ is a Lebesgue measurable function  on interval $[0,\mu(\Omega))$. For any sequence $x$, the decreasing rearrangement $x^*$ of $x$ is defined by $x^*(i) = \inf\{\lambda > 0: d_x(\lambda) < i\}$, where $\mu$ is the counting measure on $\mathbb{N}$. In this article, the term ``decreasing" refers to non-increasing. 
  The functions $f$ and $f^*$ are equimeasurable, and the same fact also holds for  sequences $x$ and $x^*$. 
  
 Let $f$ and $g$ be   Lebesgue measurable functions  on $I=[0,\gamma)$, $\gamma \le \infty$. We say that $f$ is  {\it submajorized} by $g$ if $\int_0^t f \leq \int_0^t g$ for every $t \in I$. By using the Hardy-Littlewood-Pol\'ya relation \cite[Definition 2.3.5]{BS}, we denote this submajorization by $f \prec g$. We will also use this notation for sequences analogously. The following facts will be useful later.

\begin{Lemma} {\bf (Hardy's Lemma)} \cite[Proposition 2.3.6] {BS}
	\label{lem:hardy}
	Let $f_1$ and $f_2$ be non-negative Lebesgue measurable functions on $[0, \gamma)$, $0<\gamma\le \infty$,  and suppose $\int_{0}^{t} f_1(s)ds \leq \int_{0}^{t} f_2(s)ds$ for all $t \in [0, \gamma)$. Let $g$ be any non-negative decreasing function on $[0,\gamma)$. Then $\int_{0}^{\gamma} f_1(s)g(s)ds \leq \int_{0}^{\gamma} f_2(s)g(s)ds$. 
\end{Lemma}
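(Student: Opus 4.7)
The plan is to reduce the integrated inequality to the pointwise hypothesis by slicing $g$ along its super-level sets. Since $g$ is non-negative and measurable, Tonelli's theorem gives the layer-cake representation
\[
g(s) = \int_0^\infty \mathbf{1}_{\{g > \lambda\}}(s)\, d\lambda
\]
for every $s \in [0,\gamma)$. The decisive geometric input is that $g$ is decreasing: each super-level set $E_\lambda := \{s \in [0,\gamma) : g(s) > \lambda\}$ is either empty or an interval of the form $[0, t_\lambda)$, where $t_\lambda := \sup E_\lambda \in (0, \gamma]$.

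Substituting this representation into $\int_0^\gamma f_i\, g$ and applying Tonelli's theorem once more (legitimate because everything in sight is non-negative) yields, for $i=1,2$,
\[
\int_0^\gamma f_i(s)\, g(s)\, ds = \int_0^\infty \left(\int_0^{t_\lambda} f_i(s)\, ds\right) d\lambda.
\]
The hypothesis supplies $\int_0^{t} f_1 \leq \int_0^{t} f_2$ for each $t \in [0,\gamma)$, hence for $t = t_\lambda$ whenever $t_\lambda < \gamma$. In the remaining case $t_\lambda = \gamma$, monotone convergence as $t \nearrow \gamma$ extends the inequality to the endpoint. Integrating the resulting pointwise inequality over $\lambda \in (0, \infty)$ produces the desired $\int_0^\gamma f_1 g \leq \int_0^\gamma f_2 g$.

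The only delicate points are technical rather than conceptual: measurability of $\lambda \mapsto t_\lambda$ (immediate, since it is itself a decreasing function of $\lambda$) and the endpoint case at $\gamma$ (handled by monotone convergence). An alternative route would be to approximate $g$ by simple decreasing step functions $g_n = \sum_k c_k \mathbf{1}_{[0, t_k)}$ with $c_k \geq 0$, apply the hypothesis term by term to each piece, and then pass to the limit via monotone convergence; this is essentially the same proof with the $\lambda$-integration discretized, and avoids any invocation of the layer-cake formula. I expect the layer-cake version to be the cleanest to write up.
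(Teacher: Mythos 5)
Your proof is correct and takes essentially the standard route: the paper does not prove this lemma at all but cites it from Bennett--Sharpley (Proposition 2.3.6), where the argument is precisely the step-function approximation you describe as the ``alternative route''---approximate $g$ from below by simple decreasing functions $\sum_k c_k\chi_{[0,t_k)}$ with $c_k\ge 0$, apply the hypothesis to each piece, and pass to the limit by monotone convergence---of which your layer-cake version is the continuous twin. The technical points you flag are handled soundly (the endpoint $t_\lambda=\gamma$ by monotone convergence, the case $E_\lambda=\varnothing$ giving a vacuous $0\le 0$), and in fact Tonelli applied to the jointly measurable non-negative integrand $(s,\lambda)\mapsto f_i(s)\mathbf{1}_{\{g>\lambda\}}(s)$ already sidesteps any separate need to verify measurability of $\lambda\mapsto\int_0^{t_\lambda}f_i$.
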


\begin{Lemma}\cite[pg 67]{KPS}\label{convto0dec}
	Let $f \in L_0(\Omega)$ be such that $d_{f}(\lambda) < \infty$ for all $\lambda >0$ and $(f_n)$ be a sequence of measurable functions such that $|f_n| \leq |f|$ $\mu$-a.e. for every $n \in \mathbb{N}$. If $f_n \rightarrow f$ $\mu$-a.e. then $(f - f_n)^* \rightarrow 0$ a.e. The same result remains true for sequences.  
\end{Lemma}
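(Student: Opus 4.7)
The plan is to reduce the statement to a distributional convergence argument. Set $g_n := f-f_n$; then from $|f_n|\le|f|$ we get the crucial domination $|g_n|\le 2|f|$ $\mu$-a.e., and by hypothesis $g_n\to 0$ $\mu$-a.e. Since $(f-f_n)^*=g_n^*$, it suffices to show $g_n^*(t)\to 0$ for every $t>0$.

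The core step is to prove that $d_{g_n}(\epsilon)\to 0$ for each fixed $\epsilon>0$. The domination above gives the inclusion $\{|g_n|>\epsilon\}\subset\{|f|>\epsilon/2\}=:A_\epsilon$, and by hypothesis $\mu(A_\epsilon)<\infty$. This localization is essential: $\mu$ is only $\sigma$-finite, so a direct appeal to Egorov's theorem is unavailable, but once the problem is restricted to $A_\epsilon$ Egorov does apply. For any $\delta>0$ it produces a measurable $B\subset A_\epsilon$ with $\mu(A_\epsilon\setminus B)<\delta$ on which $g_n\to 0$ uniformly. Hence for all sufficiently large $n$, $\{|g_n|>\epsilon\}\cap B=\emptyset$, so $\{|g_n|>\epsilon\}\subset A_\epsilon\setminus B$ and $d_{g_n}(\epsilon)<\delta$.

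Given $t>0$, applying the preceding step with $\delta:=t$ yields $d_{g_n}(\epsilon)<t$ for all large $n$, which by the definition $g_n^*(t)=\inf\{\lambda>0:d_{g_n}(\lambda)\le t\}$ gives $g_n^*(t)\le\epsilon$. As $\epsilon>0$ is arbitrary this forces $g_n^*(t)\to 0$ for every $t>0$, establishing a.e.\ convergence on $[0,\mu(\Omega))$. For the sequence analog one takes $\mu$ to be the counting measure on $\mathbb{N}$; the set $\{i\in\mathbb{N}:|x(i)|>\lambda\}$ is then finite by hypothesis, so pointwise convergence on a finite index set is automatically uniform, bypassing the need for Egorov entirely. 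I expect the principal subtlety to be exactly the localization step: the hypothesis $d_f(\lambda)<\infty$ is what bridges a.e.\ pointwise convergence to distributional convergence, and without it translates of a fixed bump escaping to infinity would furnish an easy counterexample.
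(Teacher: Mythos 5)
Your argument is correct. Note that the paper does not prove this lemma at all: it is stated as a citation to \cite[pg 67]{KPS}, so there is no in-paper proof to compare against. Your proposal is the standard argument and fills the gap cleanly: the domination $|f-f_n|\le 2|f|$ localizes $\{|f-f_n|>\epsilon\}$ inside the finite-measure set $\{|f|>\epsilon/2\}$, Egorov then gives $d_{f-f_n}(\epsilon)\to 0$, and the definition of the decreasing rearrangement converts this into $(f-f_n)^*(t)\to 0$ for every $t>0$; the sequence case collapses because a finite index set makes the convergence uniform automatically. You correctly identify the role of the hypothesis $d_f(\lambda)<\infty$ — without it the bump-escaping-to-infinity example shows the conclusion fails — and your observation that Egorov needs the localization to a finite-measure set (since $\mu$ is only $\sigma$-finite) is exactly the point one must not gloss over.
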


For a Banach function lattice $X$, if all equimeasurable functions $f,g \in X$ satisfies $\|f\|_X = \|g\|_X$, then we say $X$ is {\it rearrangement-invariant} (r.i.). Not only Orlicz-Lorentz spaces but also their K\"othe dual spaces are known to be r.i. and to have the Fatou property \cite[Theorem 4.7]{KR}. The fundamental function of r.i. Banach function lattice $X$ is defined by $\phi_X(t)= \|\chi_{E_t}\|_X$ where $E_t$ is a set of measure $t \in [0, \infty)$. For a r.i. Banach function space $X$ over a nonatomic $\sigma$-finite measure space, if $\lim_{t \rightarrow 0+}\phi_X(t) = 0$, then we have $X_a = X_b$ \cite[Theorem 2.5.5]{BS}. For a r.i. Banach sequence space, this relationship always holds \cite[Theorem 2.5.4]{BS}. We refer to \cite{BS, Z} for more details on the theory of Banach function lattices.

Throughout the article, the notation $\supp f$ stands for the support of a $\mu$-measurable function $f$. When a Banach space $X$ is isometrically isomorphic to another Banach space $Y$, we will use the notation $X \simeq Y$. From now on, unless specified, we will only consider $\Omega = I = [0, \gamma), \gamma \leq \infty$ with the Lebesgue measure $\mu = m$ and $\Omega = \mathbb{N}$ with the counting measure. Accordingly, we denote $L_0 = L_0(I)$ for Lebesgue measurable functions on $I$, and $\ell_0 = L_0(\mathbb{N})$ for sequences.

\section{Orlicz-Lorentz spaces and their K\"othe duals}

In this section, we provide basic information on Orlicz-Lorentz spaces and their K\"othe dual spaces $\mathcal{M}_{\varphi,w}$. Also, a few facts about level functions will be presented here.

An {\it Orlicz function} $\varphi: \mathbb{R}^+ \rightarrow [0, \infty)$ is a convex function such that $\varphi(0) = 0$ and $\varphi(u) > 0$ for all $u > 0$.  The {\it complementary function} $\varphi_*$ of an Orlicz function $\varphi$ is defined by $\varphi_*(v) = \sup\{uv - \varphi(u) : u > 0\}$,  $ v\ge 0$. Even though the function $\varphi_*$ is also convex and $\varphi_*(0) = 0$, we can have $\varphi_*(v) = 0$ for some $v > 0$ and $\varphi_*(v) = \infty$ for some $v < \infty$. For instance, the complementary function $\varphi_*$ of $\varphi(u) = u$ has value $0$ on the interval $[0,1]$ and $\infty$ outside of $[0,1]$. An Orlicz function $\varphi$ and its complementary function $\varphi_*$ satisfies {\it Young's inequality}, that is, $uv \leq \varphi(u) + \varphi_*(v)$ for all $u,v \in \mathbb{R}^+$. An Orlicz function $\varphi$ is said to be an {\it N-function at zero} if $\lim_{u \rightarrow 0} \frac{\varphi(u)}{u}= 0$ and an {\it N-function at infinity} if $\lim_{u \rightarrow \infty} \frac{\varphi(u)}{u}= \infty$. An Orlicz function is $N$-function at infinity if and only if its complementary function is finite \cite{KLT3}. When an Orlicz function $\varphi$ is both $N$-function at zero and at infinity, we say $\varphi$ is an {\it N-function}. 
An Orlicz function $\varphi$ satisfies  {\it $\Delta_2^0$-condition} if there exist $K > 2$ and $u_0 > 0$ such that $\varphi(2u) \leq K \varphi(u)$ for all $0\le u \leq u_0$. Analogously, $\varphi$ satisfies  {\it $\Delta_2^{\infty}$-condition} if there exist $K > 2$ and $u_0 \geq 0$ such that  $\varphi(2u) \leq K \varphi(u)$ for all $u \geq u_0$. If $\varphi$ satisfies  both $\Delta_2^0$- and $\Delta_2^{\infty}$-conditions, we say that $\varphi$ satisfies the {\it $\Delta_2$-condition}. In this article, the ``appropriate" $\Delta_2$-condition means  $\Delta_2^\infty$-condition for $\gamma < \infty$,  $\Delta_2$-condition for  $\gamma = \infty$, and   $\Delta_2^0$-condition for the sequence spaces. We have the following lemma that will be useful in Section 4. 

\begin{Lemma} \cite[Theorem 1.13]{Chen}  \label{le:C} 
	An Orlicz function $\varphi$ satisfies the $\Delta_2$- 
	(resp., $\Delta_2^\infty$-; $\Delta_2^0$-) condition if and only if there exist $l>1$ and $K>1$ (resp.,  $l>1, K>1$,  $u_0\ge 0$; $l>1, K>1, u_0 > 0$) such that $\varphi(lu) \leq K \varphi(u)$ for all $u\ge 0$ (resp., $u\ge u_0$; $0\le u\le u_0$). 
\end{Lemma}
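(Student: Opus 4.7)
The plan is to prove the three versions simultaneously, since the argument is the same structurally. The forward direction is immediate: the classical $\Delta_2$-condition is already a special case of the more general statement with $l=2$, so I would simply observe that $l=2>1$ and $K>2>1$ satisfy the required properties.

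For the reverse direction, I would use iteration of the hypothesis. Given that $\varphi(lu)\le K\varphi(u)$ on the appropriate range, iterating $n$ times yields $\varphi(l^n u)\le K^n\varphi(u)$ on the same range (shrunk as necessary). Since $l>1$, I can choose $n\in\mathbb{N}$ large enough that $l^n\ge 2$. Then I would invoke the monotonicity of $\varphi$ (which follows from convexity together with $\varphi(0)=0$) to obtain
\[
\varphi(2u)\le \varphi(l^n u)\le K^n\varphi(u),
\]
giving the classical $\Delta_2$-type inequality with constant $K'=K^n$. To ensure $K'>2$ as demanded by the standard formulation, I would replace $K'$ by $\max(K^n, 3)$; in fact, convexity already forces $\varphi(lu)\ge l\varphi(u)$ (via $\varphi(u)\le\tfrac{1}{l}\varphi(lu)+(1-\tfrac{1}{l})\varphi(0)$), so automatically $K^n\ge l^n\ge 2$ on the portion of the range where $\varphi(u)>0$.

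The subtle point, which I expect to be the main bookkeeping obstacle, is the range management in the iteration step, and this differs across the three cases:
\begin{itemize}
\item For $\Delta_2$, the range is $[0,\infty)$, so iteration preserves it automatically.
\item For $\Delta_2^\infty$ with threshold $u_0$, if $u\ge u_0$ then $l^{k}u\ge u_0$ for every $k\ge 0$ because $l>1$, so each step of the iteration is legitimate and the threshold is unchanged.
\item For $\Delta_2^0$ with threshold $u_0>0$, the iteration $\varphi(l^k u)\le K\varphi(l^{k-1}u)$ at the $k$-th step requires $l^{k-1}u\le u_0$. So I must restrict to $u\le u_0/l^{n-1}$ and use $u_0':=u_0/l^{n-1}>0$ as the new threshold in the conclusion, which is why the statement still allows $u_0>0$ (not the stricter requirement $u_0=$ original threshold).
\end{itemize}

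Once these range issues are settled, the argument closes with a single application of monotonicity. I would write the proof as a single unified argument with a brief case split at the end to record the correct threshold in each of the three variants, rather than redoing the iteration three times.
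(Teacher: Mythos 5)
The paper does not prove this lemma itself; it cites it directly from Chen's monograph, so there is no in-paper argument to compare against. Your reverse direction is correct and handles the real content well: the iteration $\varphi(l^n u)\le K^n\varphi(u)$, the choice of $n$ with $l^n\ge 2$, the monotonicity step $\varphi(2u)\le\varphi(l^n u)$ (legitimate since convexity with $\varphi(0)=0$ makes $\varphi$ nondecreasing), and especially the range bookkeeping (threshold unchanged for $\Delta_2^\infty$ because $l^k u\ge u_0$ when $u\ge u_0$; shrunk to $u_0/l^{n-1}$ for $\Delta_2^0$; unconstrained for $\Delta_2$) are all right. So is your observation that convexity forces $K\ge l$, hence $K^n\ge l^n$, so after choosing $n$ with $l^n>2$ the constant $K^n$ automatically exceeds $2$ as the definition requires.

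There is, however, a genuine gap in your forward direction for the $\Delta_2$ case. The paper defines $\Delta_2$ as the \emph{conjunction} of $\Delta_2^0$ and $\Delta_2^\infty$, each coming with its own constant $K_i$ and its own threshold $u_i$. The lemma's $\Delta_2$ conclusion asks for a single pair $(l,K)$ with $\varphi(lu)\le K\varphi(u)$ for \emph{all} $u\ge 0$, and ``take $l=2$ and the $K$ from the definition'' does not supply one: if $u_1<u_2$ (the $\Delta_2^0$ threshold below the $\Delta_2^\infty$ threshold), the hypotheses say nothing on the middle interval $(u_1,u_2)$. The missing step is the standard gluing: an Orlicz function is finite-valued and convex, hence continuous, and $\varphi(u)>0$ for $u>0$, so $u\mapsto\varphi(2u)/\varphi(u)$ is continuous and bounded on the compact set $[u_1,u_2]$ by some $K_3$, and then $K=\max(K_1,K_2,K_3)$ works globally. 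For the $\Delta_2^0$ and $\Delta_2^\infty$ variants taken separately your ``immediate'' claim is fine, since each is a single one-sided condition already formulated with $l=2$; only the combined $\Delta_2$ case needs the extra argument.
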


Let the {\it weight function} $w: I \rightarrow [0, \gamma)$ be a decreasing and locally integrable function. Then $W(t) := \int_0^t w < \infty$ for all $t\in I$.  In this article we always assume that $W(\infty) = \int_0^\infty w =\infty$ . The {\it modular} for Orlicz-Lorentz function spaces $\rho_{\varphi,w}(\cdot): L_0 \rightarrow [0, \infty]$ on $f \in L_0$ is defined by
\[
\rho_{\varphi,w}(f) = \int_I \varphi(f^*)w,
\]  
and
the  modular for Orlicz-Lorentz sequence spaces $\alpha_{\varphi,w}(\cdot): \ell_0 \rightarrow [0, \infty]$ on $x \in \ell_0$ by
\[
\alpha_{\varphi,w}(x) = \sum_{i=1}^\infty \varphi(x^*(i))w(i).
\] 
The modular $\rho_{\varphi,w}$ is convex and orthogonally subadditive, that is for $f \wedge g = 0$, $\rho_{\varphi, w}(f + g) \leq \rho_{\varphi,w}(f) + \rho_{\varphi,w}(g)$. If $0 \leq|f| \leq |g|$ a.e., then $\rho_{\varphi,w}(f) \leq \rho_{\varphi,w}(g)$. Similarly, these facts are also true for the modular $\alpha_{\varphi,w}$.

The {\it Orlicz-Lorentz function} and {\it sequence space} $\Lambda_{\varphi,w}$ and $\lambda_{\varphi,w}$ are defined respectively by
\begin{align*}
	\Lambda_{\varphi,w} &= \{f \in L_0 : \rho_{\varphi, w}(kf) < \infty\,\,\, \text{for some} \,\,\, k > 0\},\\
	\lambda_{\varphi,w} &= \{x \in \ell_0 : \alpha_{\varphi, w}(kx) < \infty\,\,\, \text{for some} \,\,\, k > 0\}.
\end{align*}
Notice that we get the Orlicz space $L_{\varphi}$ (resp. $\ell_\varphi$) if $w \equiv 1$ and the Lorentz space $\Lambda_{p, w}$ for $1 \leq p < \infty$ (resp. $\lambda_{p,w}$) if  $\varphi(u) = u^p$.

We consider the {\it Luxemburg norm} $\|\cdot\|_{\varphi,w}$ on $\Lambda_{\varphi, w}$ defined by
\[
	\|f\|_{\varphi,w} = \inf\left\{\epsilon > 0 : \rho_{\varphi,w}\left(\frac{f}{\epsilon}\right) \leq 1\right\},
\]
and the {\it Orlicz norm} $\|\cdot\|_{\varphi,w}^0$ on $\Lambda_{\varphi, w}$ defined by
\[	
	\|f\|_{\varphi,w}^0 = \inf_{k > 0}\left(\frac{1}{k}(1 + \rho_{\varphi, w}(kf))\right). 
\]
The analogous norms for the sequence space $\lambda_{\varphi,w}$ can be defined by replacing $\rho_{\varphi, w}$ with $\alpha_{\varphi,w}$. These norms are known to be equivalent to each other, precisely, 
\begin{align*}
\|f\|_{\varphi,w} &\leq \|f\|_{\varphi,w}^0 \leq 2\|f\|_{\varphi,w}, \\
\|x\|_{\varphi,w} &\leq \|x\|_{\varphi,w}^0 \leq 2\|x\|_{\varphi,w}. 
\end{align*}
From now on, we denote an Orlicz-Lorentz function (resp. sequence) space equipped with the Luxemburg norm by $\Lambda_{\varphi,w}$ (resp. $\lambda_{\varphi,w}$) and with the Orlicz norm by $\Lambda_{\varphi,w}^0$ (resp. $\lambda_{\varphi,w}^0$), respectively. For more information on Orlicz-Lorentz spaces and their properties, we refer to \cite{K}.

An explicit description of the K\"othe dual spaces $\mathcal{M}_{\varphi,w}$ and $\mathfrak{m}_{\varphi,w}$ of Orlicz-Lorentz function and sequence spaces has been given  and studied in \cite{FLM,KLR}. In \cite{KR2}, the more abstract and general approach clarifies and completes the studies. There are two modulars $P_{\varphi,w}$ and $Q_{\varphi,w}$ for $f \in L_0$ defined by 
\[
P_{\varphi,w}(f) = \inf\left\{\int_I \varphi\left(\frac{f^*}{v}\right)v: v \prec w, v \downarrow \right\}, \, \text{and} \,\,\, Q_{\varphi,w}(f) = \int_I \varphi\left(\frac{(f^*)^0}{w}\right)w = \int_I \varphi\left(\frac{f^*}{w^{f^*}}\right)w^{f^*},
\]	
and $\mathfrak{p}_{\varphi,w}$ and $\mathfrak{q}_{\varphi,w}$ for $x \in \ell_0$ defined by
\begin{align*}
\mathfrak{p}_{\varphi,w}(x) &= \inf\left\{\sum_{i=1}^{\infty} \varphi\left(\frac{x^*(i)}{v(i)}\right)v(i): v \prec w, v \downarrow \right\}, \,\text{and}\\ 
\mathfrak{q}_{\varphi,w}(x) &= \sum_{i=1}^{\infty} \varphi\left(\frac{(x^*)^0(i)}{w(i)}\right)w(i) = \sum_{i=1}^{\infty} \varphi\left(\frac{x^*(i)}{w^{x^*}(i)}\right)w^{x^*}(i). 
\end{align*}

Here $f^0$ (resp.  $x^0$) denotes the level function of $f$ (resp. $x$) in the sense of Halperin \cite{Hal} and $w^{f^*}$ (resp. $w^{x^*}$) is the inverse level function with respect to $f^*$ (resp. $x^*$) \cite{KLR, KR2}. If an Orlicz function is an $N$-function, then $P_{\varphi,w}(f) = Q_{\varphi,w}(f)$ (resp. $\mathfrak{p}_{\varphi,w}(x) = \mathfrak{q}_{\varphi,w}(x)$) for every $f \in L_0$ (resp. $x \in L_0$). However, without the assumption, we only know $P_{\varphi,w}(f) \leq Q_{\varphi,w}(f)$ (resp. $\mathfrak{p}_{\varphi,w}(f) \leq \mathfrak{q}_{\varphi,w}(f)$) in general \cite[Theorem 8.9]{KR2}. With these modulars, the function and sequence spaces $\mathcal{M}_{\varphi,w}$ and $\mathfrak{m}_{\varphi,w}$ \cite[Corollary 7.7]{KR2} are given by
\begin{align*}
	\mathcal{M}_{\varphi,w} &= \{f \in L_0 : P_{\varphi, w}(kf) < \infty\,\,\, \text{for some} \,\,\, k > 0\} = \{f \in L_0 : Q_{\varphi, w}(kf) < \infty\,\,\, \text{for some} \,\,\, k > 0\},\\
	\mathfrak{m}_{\varphi,w} &= \{x \in \ell_0 : \mathfrak{p}_{\varphi, w}(kx) < \infty\,\,\, \text{for some} \,\,\, k > 0\} = \{x \in \ell_0 : \mathfrak{q}_{\varphi, w}(kx) < \infty\,\,\, \text{for some} \,\,\, k > 0\}.
\end{align*}

The Luxemburg norm $\|\cdot\|_{\mathcal{M}_{\varphi,w}}$ and the Orlicz norm $\|\cdot\|_{\mathcal{M}_{\varphi,w}}^0$ \cite[Theorem 8.8]{KR2} are defined by
\begin{align*}
	\|f\|_{\mathcal{M}_{\varphi,w}} &= \inf\left\{\epsilon > 0 : P_{\varphi,w}\left(\frac{f}{\epsilon}\right) \leq 1\right\} = \inf\left\{\epsilon > 0 : Q_{\varphi,w}\left(\frac{f}{\epsilon}\right) \leq 1\right\},\\
	\|f\|_{\mathcal{M}_{\varphi,w}}^0 &= \inf_{k > 0}\left(\frac{1}{k}(1 + P_{\varphi,w}(kf))\right) = \inf_{k > 0}\left(\frac{1}{k}(1 + Q_{\varphi,w}(kf))\right).
\end{align*}
The analgous norms for the sequence space $\mathfrak{m}_{\varphi,w}$ can be defined by replacing $P_{\varphi,w}$ (resp. $Q_{\varphi,w}$) with $\mathfrak{p}_{\varphi,w}$ (resp. $\mathfrak{q}_{\varphi,w}$). From now on, we denote the function (resp. sequence) space $\mathcal{M}_{\varphi,w}$ (resp. $\mathfrak{m}_{\varphi,w}$) equipped with the Luxemburg norm and the space $\mathcal{M}_{\varphi,w}^0$ (resp. $\mathfrak{m}_{\varphi,w}^0$) equipped with the Orlicz norm, respectively. We recall the relationship between the Orlicz-Lorentz spaces and the spaces $\mathcal{M}_{\varphi,w}$.

\begin{Theorem}\label{th:KLR}\cite{KLR, KR2, FLM}
	Let $\varphi$ be an Orlicz function and $w$ be a locally integrable, non-negative decreasing weight function on $I = [0, \gamma)$. Then the K\"othe dual spaces of the Orlicz-Lorentz spaces $\Lambda_{\varphi, w}$ and $\Lambda_{\varphi, w}^0$ are given by $\Lambda_{\varphi, w}' \simeq \mathcal{M}_{\varphi_*,w}^0$ and $(\Lambda_{\varphi, w}^0)' \simeq \mathcal{M}_{\varphi_*,w}$. The similar facts are also true for the sequence spaces.
	
\end{Theorem}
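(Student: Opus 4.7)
The plan is to combine a Hölder-type inequality, which delivers the $\leq$ direction on the Köthe dual norm, with a saturation argument via Halperin's level function technology, which delivers the $\geq$ direction. The two isometric identifications then fall out symmetrically: one pairs the Luxemburg norm on $\Lambda_{\varphi,w}$ with the Orlicz norm on $\mathcal{M}_{\varphi_*,w}$, and the other pairs the Orlicz norm with the Luxemburg norm. The sequence case will follow by the same scheme with integrals replaced by sums.

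For the upper bound, take $f \in \Lambda_{\varphi,w}$ and $g \in \mathcal{M}_{\varphi_*,w}$. Reduce $\int_I |fg| \leq \int_I f^*g^*$ by the Hardy-Littlewood inequality, and for any decreasing $v$ with $v \prec w$ split
\[
\int_I f^*g^* = \int_I f^* \cdot \frac{g^*}{v}\, v.
\]
Young's inequality bounds the integrand by $\varphi(f^*) + \varphi_*(g^*/v)$; since $\varphi(f^*)$ is decreasing and $v \prec w$, Hardy's Lemma \ref{lem:hardy} yields $\int \varphi(f^*)\, v \leq \rho_{\varphi,w}(f)$. Taking the infimum over admissible $v$ gives
\[
\int_I |fg| \leq \rho_{\varphi,w}(f) + P_{\varphi_*,w}(g).
\]
Homogenizing in $g$ (replace $g$ by $kg$ and optimize in $k>0$) converts this into $\int |fg| \leq \|f\|_{\varphi,w}\,\|g\|_{\mathcal{M}_{\varphi_*,w}}^0$, while homogenizing in $f$ gives the companion $\int |fg| \leq \|f\|_{\varphi,w}^0\,\|g\|_{\mathcal{M}_{\varphi_*,w}}$. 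These produce the embeddings $\mathcal{M}_{\varphi_*,w}^0 \hookrightarrow \Lambda_{\varphi,w}'$ and $\mathcal{M}_{\varphi_*,w} \hookrightarrow (\Lambda_{\varphi,w}^0)'$ with the desired norm estimates.

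For the reverse inequality, given $g$ and $\epsilon>0$ I would construct a saturating test function $f_\epsilon$ of unit norm by selecting $f_\epsilon^*$ from the subdifferential of $\varphi_*$ evaluated at $g^*/v_\epsilon$, where $v_\epsilon$ is nearly optimal in the infimum defining $P_{\varphi_*,w}(g)$. Halperin's theory of level functions identifies this near-optimal $v_\epsilon$ with a rescaling of the inverse level function $w^{g^*}$; this is the mechanism by which $P_{\varphi_*,w}$ and $Q_{\varphi_*,w}$ coincide when $\varphi$ is an $N$-function, as recorded in the excerpt. With $f_\epsilon$ so chosen, the Young step becomes equality up to $\epsilon$ and Hardy's inequality tightens because $\varphi(f_\epsilon^*)$ is decreasing while $v_\epsilon$ is essentially saturating in the level-function sense, so the Hölder chain of the previous paragraph collapses to the matching lower bound.

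The main technical obstacle will be this saturation step. The infimum defining $P_{\varphi_*,w}$ is generally not attained; the complementary function $\varphi_*$ may fail to be strictly convex or even finite everywhere, so Young's equality must be read as a subdifferential condition; and the level-function construction has to be executed separately for $\gamma<\infty$, $\gamma=\infty$, and in the discrete sequence case. Once saturation is in place, the Fatou property of $\mathcal{M}_{\varphi_*,w}$ with either norm \cite[Theorem 4.7]{KR} together with the Lorentz-Luxemburg reflexivity $X''=X$ promote the embeddings to isometric isomorphisms, finishing both identifications.
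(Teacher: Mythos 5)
Note first that the paper does not prove this theorem: it is recalled with citations to \cite{KLR, KR2, FLM}, so there is no in-source argument for your proposal to match against. What can be said is that your outline follows the standard architecture of those cited works — Hardy--Littlewood, Young, Hardy's Lemma for the one-sided H\"older inequality; level-function saturation for the reverse; Fatou and Lorentz--Luxemburg biduality to promote norm inequalities to isometric identifications — and the easy direction as you wrote it is correct: the chain $\int_I|fg|\leq\int_I f^*g^*\leq\rho_{\varphi,w}(f)+P_{\varphi_*,w}(g)$ is sound, and both homogenizations deliver the stated embeddings with the right norm bounds.

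The genuine gap is in the reverse direction, and it is more substantial than your caveat suggests. Your saturation scheme — pick $f_\epsilon^*\in\partial\varphi_*(g^*/v_\epsilon)$ with $v_\epsilon$ essentially the inverse level function $w^{g^*}$ — does make both the Young step and the Hardy step tight simultaneously, but it relies on three things that the hypotheses do not grant you: that $\varphi_*$ is finite-valued so that $g^*/v_\epsilon$ lies in its effective domain, that $\partial\varphi_*$ is a usable selection, and that $P_{\varphi_*,w}=Q_{\varphi_*,w}$, which the paper records only when $\varphi$ is an $N$-function. The theorem, however, assumes $\varphi$ is merely an Orlicz function, so $\varphi_*$ can jump to $+\infty$ (as for $\varphi(u)=u$, where the dual is essentially $L_\infty$), the infimum defining $P_{\varphi_*,w}$ need not be attained at $w^{g^*}$ or anywhere else, and the whole subdifferential construction breaks. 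These degenerate cases are not an afterthought; handling them occupies a significant portion of \cite{KLR} and \cite{KR2}. Separately, the closing appeal to Fatou plus $X''=X$ does not by itself give the set containment $\Lambda_{\varphi,w}'\subseteq\mathcal{M}_{\varphi_*,w}^0$: you still must show that any $g$ with finite K\"othe-dual norm satisfies $P_{\varphi_*,w}(kg)<\infty$ for some $k>0$, which requires a density or exhaustion argument (typically via bounded functions with support of finite measure) before biduality can be invoked. So the proposal is a correct roadmap, but the parts you flag as obstacles are precisely the parts that constitute the proof, and they are deferred rather than resolved.
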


As we have seen earlier, working with the modulars $Q_{\varphi,w}$ and $\mathfrak{q}_{\varphi,w}$ involves Halperin's level functions. Let $f \in L_0$ where $f \geq 0$ and set $F(t_1, t_2) = \int_{t_1}^{t_2} f(x)dx$, and $W(t_1, t_2) = \int_{t_1}^{t_2} w(x)dx$, where $0\le t_1\le t_2\le \gamma$. An interval $(a, b]$ is said to be a {\it level interval} of $f$ if $\frac{F(a,t)}{W(a,t)} \leq \frac{F(a,b)}{W(a,b)}$ for every $t \in (a, b]$. We say a level interval is {\it maximal} if it is not contained in other level intervals. Then the {\it level function} $f^0$ of $f$ is defined by
\[ f^0(t) = \begin{cases} 
	\frac{F(a, b)}{W(a,b)}w(t), & t \in (a,b)\,\,\, \text{for each maximal level interval}\,\,\, (a,b),  \\
	f(t), & t \notin \cup(a,b). 
\end{cases}
\]
The level sequences can be also defined in a similar way. There are countably many maximal level intervals, and if $f^0$ is a non-negative, locally integrable function, we can express $f^0 = Dw$ where $D$ is a decreasing function \cite{L, Hal, FLM}. 

Let $w$ be a weight function. A non-negative function $L$ on $I$ is {\it $w$-affine} if $L(t) = cW(t) + d$ for $c, d \in \mathbb{R}$. An increasing positive function $F$ on $I$ is said to be {\it $w$-concave} if $L(t) \leq F(t)$ for all $t \in (a, b)$, where $L$ is a $w$-affine function such that $L(a) = F(a)$ and $L(b) = F(b)$ for all $0 \leq a < b < \gamma$. If a function $F(t) = \int_0^t f$ is $w$-concave, we can express the function $f$ as $f = Dw$ with some decreasing function $D$ \cite[Theorem 2.1.(iii)]{FLM}. For a positive function $f$, consider $FW(t) = \int_{0}^{t}f(s)w(s)ds$ for $t \in I$ and its least $w$-concave majorant denoted by $F^b$. Then there exists $f^s$ such that $F^b(t) = \int_0^t f^s(s)w(s)ds$ because of the $w$-concavity of $F^b$. The function $f^s$ is called the level function in the sense of Sinnamon \cite{Sinn1}. It is well-known that if $0 \leq f \leq g$ a.e. then $f^s \leq g^s$ a.e., and if $0 \leq f_n \uparrow f$ a.e. then $f_n^s \uparrow f^s$ a.e \cite{Sinn2}. We mention that such monotonicity and convergence properties were not explicitly stated for Halperin's level functions in the literature. However, the following relationship stated in \cite{FLM} between Halperin's level functions and Sinnamon's level functions

\begin{equation}\label{eq:important}
\frac{f^0}{w} = \left(\frac{f}{w}\right)^s a.e.
\end{equation}
provides us with the similar properties for the level functions defined by Halperin. Consequently,

\begin{Proposition}\label{prop:Halmono}
	Let $f, f_n, g\in L_0$ be non-negative, locally integrable functions on $I$ and $w$ be a weight function. If $0\leq f \leq g$ a.e., then $f^0 \leq g^0$ a.e. If $0 \leq f_n \uparrow f$ a.e., then $f_n^0 \uparrow f^0$ a.e. The analogous statement also holds for sequences.
\end{Proposition}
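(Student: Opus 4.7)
The plan is to deduce both assertions from the identity \eqref{eq:important}, namely $f^0/w = (f/w)^s$ a.e., together with the monotonicity and monotone convergence properties of Sinnamon's level operator $f\mapsto f^s$ already recalled in the excerpt.

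As a preliminary step I would verify that $w>0$ a.e. on $I$. Since $w$ is decreasing and non-negative, if $w(t_0)=0$ at some point $t_0\in I$ then $w\equiv 0$ on $[t_0,\gamma)$, which together with local integrability forces $W(\infty)<\infty$, contradicting our standing hypothesis $W(\infty)=\infty$. Hence division by $w$ is unambiguous and preserves pointwise inequalities and pointwise monotone limits: $0\le f\le g$ a.e.\ implies $0\le f/w\le g/w$ a.e., and $0\le f_n\uparrow f$ a.e.\ implies $0\le f_n/w\uparrow f/w$ a.e.

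For the monotonicity assertion, apply Sinnamon's monotonicity to $f/w\le g/w$ to obtain $(f/w)^s\le (g/w)^s$ a.e.; multiplying through by the non-negative function $w$ and invoking \eqref{eq:important} on both sides gives $f^0\le g^0$ a.e. For the monotone convergence assertion, Sinnamon's convergence property applied to $f_n/w\uparrow f/w$ yields $(f_n/w)^s\uparrow (f/w)^s$ a.e.; multiplying by $w$ and applying \eqref{eq:important} once more produces $f_n^0\uparrow f^0$ a.e.

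For the sequence statement the same argument transfers verbatim, provided one has the sequence analog of \eqref{eq:important} relating $x^0$ to the Sinnamon-type level operation on sequences. The only (mild) obstacle I foresee is confirming this identity in the discrete setting; it is established along the same lines as the function-space version in \cite{FLM}, with integrals replaced by partial sums in the construction of the least $w$-concave majorant. Once that identity is available, the order preservation and monotone convergence for $x\mapsto x^0$ follow by the identical multiplication-by-$w$ argument.
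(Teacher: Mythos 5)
Your proof is correct and takes essentially the same route the paper intends: Proposition \ref{prop:Halmono} is stated as an immediate consequence of the identity \eqref{eq:important} together with the monotonicity and monotone-convergence properties of Sinnamon's level operator, and your argument simply makes that deduction explicit. One small caveat: the preliminary verification that $w>0$ a.e.\ via the standing hypothesis $W(\infty)=\infty$ only makes sense when $\gamma=\infty$ (for $\gamma<\infty$ a decreasing locally integrable $w$ automatically has $W(\gamma)<\infty$, so that hypothesis cannot hold there); in the finite-interval case positivity of $w$ is instead an implicit standing assumption inherited from the level-function literature, but this does not affect the substance of your argument.
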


Since the level function takes average values of $f$ over each maximal level intervals with respect to a weight function $w$, the corresponding values of $f$ and $f^0$ may differ on these intervals. Hence, it is natural to ask when these two functions can be the same. Based on our observation so far, we can identify a certain type of functions which level functions are identical to themselves. These functions turn out to be useful in the next section.

\begin{Proposition}\label{prop:declevel}
	Let $f \in L_0$ be a decreasing, non-negative function and $w$ be a weight function. Then, $(fw)^0 = fw$ a.e.  The analogous statement also holds for sequences.  
\end{Proposition}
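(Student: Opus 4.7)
The plan is to argue directly from Halperin's definition of the level function. Set $g = fw$. Recall that $(a,b]$ is a level interval of $g$ with respect to $w$ iff $\frac{\int_a^t g}{W(t)-W(a)} \le \frac{\int_a^b g}{W(b)-W(a)}$ for all $t \in (a,b]$. By the piecewise definition of $g^0$, it suffices to show that on each maximal level interval $(a,b]$ of $g$ one already has $g^0 = g$ a.e.; off the union of such intervals the identity holds by definition.

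The key observation is that, when $f$ is decreasing, the map
\[
t \;\longmapsto\; \frac{\int_a^t fw}{W(t)-W(a)}
\]
is itself nonincreasing on $(a,\gamma)$. Indeed, splitting $(a,b) = (a,t) \cup (t,b)$ and using $f(s) \ge f(t)$ for $s \le t$ together with $f(s) \le f(t)$ for $s \ge t$, one gets
\[
\frac{\int_a^t fw}{W(t)-W(a)} \;\ge\; f(t) \;\ge\; \frac{\int_t^b fw}{W(b)-W(t)},
\]
and the elementary mediant inequality $\frac{A}{B} \ge \frac{C}{D} \Rightarrow \frac{A}{B} \ge \frac{A+C}{B+D}$ (for positive terms) then yields $\frac{\int_a^t fw}{W(t)-W(a)} \ge \frac{\int_a^b fw}{W(b)-W(a)}$ for every $a<t<b$.

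Combined with the level-interval inequality, this forces $\frac{\int_a^t fw}{W(t)-W(a)} = \frac{\int_a^b fw}{W(b)-W(a)} =: c$ for every $t \in (a,b]$. The identity $\int_a^t fw = c\,(W(t)-W(a))$ now gives $fw = cw$ a.e.\ on $(a,b)$ (both sides are absolutely continuous in $t$, so their a.e.\ derivatives coincide), whence $g^0(t) = c\,w(t) = f(t)w(t) = g(t)$ a.e.\ on $(a,b)$, as required. The sequence version runs on the same template with sums in place of integrals and discrete differences in place of derivatives. I do not anticipate a substantive obstacle; the only minor bookkeeping item is the degenerate case $W(a) = W(b)$ (i.e.\ $w \equiv 0$ a.e.\ on $(a,b)$), where both $g$ and $g^0$ vanish a.e.\ on $(a,b)$ and the identity is trivial.
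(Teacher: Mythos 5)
Your proof is correct, and it takes a genuinely different route from the paper's. The paper first verifies $(fw)^0 = fw$ for decreasing \emph{simple} $f$ by identifying each step $(t_{i-1},t_i)$ as a maximal level interval via explicit computation, and then passes to general decreasing $f$ by monotone approximation, invoking Proposition \ref{prop:Halmono} (which in turn rests on Sinnamon's level function and the identity \eqref{eq:important}). You instead work directly from Halperin's definition: the mediant inequality shows that for decreasing $f$ the averaged ratio $t \mapsto \frac{\int_a^t fw}{W(t)-W(a)}$ is already nonincreasing, so the level-interval inequality (which goes the other way) forces equality, and a.e.\ differentiation of the resulting identity $\int_a^t fw = c(W(t)-W(a))$ gives $fw = cw$ a.e.\ on the interval. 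This argument is more self-contained: it avoids the reduction to simple functions, avoids needing to know the maximal level intervals explicitly, and does not depend on the monotonicity machinery of Proposition \ref{prop:Halmono}. The paper's two-step route, on the other hand, reuses a lemma (\ref{prop:Halmono}) that is needed elsewhere anyway, so both have their merits. One small presentational point: you don't actually need to know what the maximal level intervals of $fw$ \emph{are}; your argument shows that \emph{every} level interval is degenerate in the sense that the averaging leaves $fw$ unchanged on it, which is exactly what is needed.
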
 

\begin{proof}
	
	First, we consider a decreasing simple function $f = \sum_{i=1}^{n} c_n \chi_{[t_{i-1}, t_{i})}$ where $t_0 = 0$ and $c_1 > c_2 > \cdots > c_n > 0$. Denote $FW(t_{i-1}, t_i) = \int_{t_{i-1}}^{t_i}fw$ and $W(t_{i-1}, t_i) = \int_{t_{i-1}}^{t_i}w$, $i=1,\dots,n$. We claim that each subinterval $(t_{i-1}, t_i)$ is a maximal level interval. Observe that
	\[
	\frac{FW(0, t)}{W(0, t)} = \frac{c_1W(0, t)}{W(0, t)} = c_1
	\]
	for every $t \in (0, t_1)$. So the interval $(0, t_1)$ is a level interval. Moreover, for every $t \in (t_{i-1}, t_i)$, where $i = 2, 3, \dots n$, notice that
	\[
	\frac{FW(0, t)}{W(0, t)} = \frac{c_1W(0, t_1)  +  \cdots + c_iW(t_{i-1}, t)}{W(0, t)} < c_1.
	\]
	 Hence, there is no level interval that contains $(0, t_1)$, and so the level interval $(0, t_1)$ is maximal. By the similar procedure, we can also show that each subinterval $(t_{i-1}, t_i)$ is a maximal level interval. 
	  Thus, from the fact that
	\[
	(fw)^0(t) = \frac{FW(t_{i-1}, t_i)}{W(t_{i-1}, t_i)}w(t) = c_i w(t) \,\,\, \text{for every} \,\,\, t \in (t_{i-1}, t_i),
	\]
	we obtain
	\[
	\frac{(fw)^0}{w} = \sum_{i=1}^{n} c_i \chi_{[t_{i-1}, t_{i})} = f \ \  a.e.
	\] 
	Now, let $f \in L_0$ be a non-negative, decreasing function. Then there exists a sequence of decreasing simple functions $(f_m)_{m=1}^{\infty}$ such that $f_m \uparrow f$ a.e. Since $f_m w \uparrow f w$ a.e., we see that $(f_m w)^0 = f_m w\uparrow fw$ a.e. But then, in view of Proposition \ref{prop:Halmono}, we have $(f_m w)^0 \uparrow (fw)^0$ a.e. Therefore, $(fw)^0 = fw$ a.e. 
\end{proof}

In this article, we always assume that $\varphi$ is an Orlicz function and that $w$ is a weight function. If there are results that only hold for $N$-functions (at infinity), we will state them explicitly.

\section {separability of $\mathcal{M}_{\varphi,w}$ and $\mathfrak{m}_{\varphi,w}$}
In this section, we study the separabilty of the spaces $\mathcal{M}_{\varphi,w}$ and $\mathfrak{m}_{\varphi,w}$. The results in this section will play an important role to study of M-embeddedness of Orlicz-Lorentz spaces. We mention that the same facts also hold for $\mathcal{M}_{\varphi,w}^0$ and $\mathfrak{m}_{\varphi,w}^0$ due to the equivalence of the Luxemburg  and the Orlicz norms \cite{KLR}. 

For convenience, we provide the explicit computation of $\frac{(c\chi_{[0, s)})^0}{w}$, where $c > 0$ and $s \in I$. Since the mapping $t \mapsto \frac{t}{W(t)}$ is increasing, notice that

\begin{equation}\label{eq:charlevel}
	\frac{(c\chi_{(0, s)})^0(t)}{w(t)} = \frac{c\cdot s}{W(s)}\chi_{(0, s)}(t).  	
\end{equation}

Using (\ref{eq:charlevel}) and the definition of $\|\cdot\|_{\mathcal{M}_{\varphi,w}}$, we can directly compute the fundamental function $\phi_{\mathcal{M}_{\varphi,w}}$.
	\begin{Lemma} \cite[Proposition 4.9]{KR} {\rm{(}c.f. \cite[Theorem 2.3]{KLT3} and \cite[Proposition 2.2]{KT}\rm{)}}\label{lem:Mfund}
	The fundamental function $\phi_{\mathcal{M}_{\varphi,w}}$ is expressed by
		\[
		\phi_{\mathcal{M}_{\varphi,w}}(t) = \frac{t}{W(t)}\bigg/\frac{1}{\varphi^{-1}(1/W(t))},
		\]
		where $\varphi^{-1}$ is the inverse function of $\varphi$. Consequently, $\lim_{t \rightarrow 0+}\phi_{\mathcal{M}_{\varphi,w}}(t) = 0$.
	\end{Lemma}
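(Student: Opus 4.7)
The plan is to use rearrangement invariance to reduce the problem to evaluating $\|\chi_{[0,t)}\|_{\mathcal{M}_{\varphi,w}}$, then to carry out a direct computation via the $Q_{\varphi,w}$-formulation of the Luxemburg norm. Since $\mathcal{M}_{\varphi,w}$ is r.i., $\phi_{\mathcal{M}_{\varphi,w}}(t)=\|\chi_{[0,t)}\|_{\mathcal{M}_{\varphi,w}}$. Noting that $\chi_{[0,t)}/\epsilon$ is already non-negative and decreasing, we have $(\chi_{[0,t)}/\epsilon)^{*}=\chi_{[0,t)}/\epsilon$, so the explicit identity (\ref{eq:charlevel}) applies directly and gives
\[
\frac{((\chi_{[0,t)}/\epsilon)^{*})^{0}(s)}{w(s)}=\frac{t}{\epsilon\,W(t)}\chi_{[0,t)}(s).
\]

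Next I would plug this into the second representation of $Q_{\varphi,w}$, namely $Q_{\varphi,w}(f)=\int_{I}\varphi(f^{*{0}}/w)\,w$. The integrand is constant on $[0,t)$ and vanishes elsewhere, so the integral collapses to
\[
Q_{\varphi,w}\!\left(\frac{\chi_{[0,t)}}{\epsilon}\right)=\varphi\!\left(\frac{t}{\epsilon\,W(t)}\right)W(t).
\]
Since $\epsilon\mapsto Q_{\varphi,w}(\chi_{[0,t)}/\epsilon)$ is continuous and strictly decreasing in $\epsilon$, the infimum in the Luxemburg definition is attained at the unique value making the modular equal to $1$. Solving $\varphi\!\left(t/(\epsilon W(t))\right)=1/W(t)$ for $\epsilon$, which amounts to one application of $\varphi^{-1}$, yields exactly $\epsilon=t/\bigl(W(t)\varphi^{-1}(1/W(t))\bigr)$, proving the stated formula.

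For the consequently clause, I would analyze the factors separately as $t\to 0+$. Since $W$ is absolutely continuous with $W(0)=0$, we have $W(t)\to 0$, hence $1/W(t)\to\infty$ and therefore $\varphi^{-1}(1/W(t))\to\infty$. Monotonicity of $w$ gives $W(t)\geq t\,w(t)$, so $t/W(t)\leq 1/w(t)$; since $w$ is non-increasing and positive (as $W(\infty)=\infty$ forbids $w\equiv 0$), $1/w(t)$ remains bounded as $t\to 0+$ (either $w(0+)<\infty$, in which case the bound is finite, or $w(0+)=\infty$, in which case $t/W(t)\to 0$). Multiplying the bounded factor $t/W(t)$ by $1/\varphi^{-1}(1/W(t))\to 0$ yields $\phi_{\mathcal{M}_{\varphi,w}}(t)\to 0$.

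There is essentially no deep obstacle here; the work is already done by identity (\ref{eq:charlevel}) and by Proposition~\ref{prop:declevel} which guarantees that no further level averaging occurs on the complement of the support. The only point that requires a line of justification is the attainment of the infimum defining the Luxemburg norm, which follows from the continuity and strict monotonicity of the modular map $\epsilon\mapsto\varphi(t/(\epsilon W(t)))W(t)$ together with the fact that it realizes the value $1$ somewhere in $(0,\infty)$ whenever $\varphi^{-1}(1/W(t))$ is finite.
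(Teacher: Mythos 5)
Your approach and computation are correct and match exactly what the paper indicates just before the Lemma (reduce to $\|\chi_{[0,t)}\|_{\mathcal{M}_{\varphi,w}}$ by rearrangement invariance, apply identity (\ref{eq:charlevel}), and solve the modular equation $Q_{\varphi,w}(\chi_{[0,t)}/\epsilon)=1$ for $\epsilon$); the argument for attainment of the infimum and for the limit as $t\to 0+$ is also sound.

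One caveat worth flagging: your derivation yields $\phi_{\mathcal{M}_{\varphi,w}}(t)=\dfrac{t}{W(t)\,\varphi^{-1}(1/W(t))}=\dfrac{t}{W(t)}\cdot\dfrac{1}{\varphi^{-1}(1/W(t))}$, while the lemma as typeset reads $\dfrac{t}{W(t)}\bigg/\dfrac{1}{\varphi^{-1}(1/W(t))}=\dfrac{t}{W(t)}\,\varphi^{-1}(1/W(t))$, which is the reciprocal in the $\varphi^{-1}$ factor. Your expression is the correct one: it is the only one consistent with the claimed limit $\phi_{\mathcal{M}_{\varphi,w}}(t)\to 0$ (since $t/W(t)$ is bounded near $0$ but $\varphi^{-1}(1/W(t))\to\infty$), and it reduces to $\phi_{L_1}(t)=t$ when $\varphi(u)=u$ and $w\equiv 1$. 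So the typeset formula in the Lemma has a typo (the intended operation is multiplication, not division), and rather than asserting you have proven ``the stated formula'' verbatim, you should note that your computed value agrees with the intended formula and with the stated consequence.
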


Now, we observe the relationship between $f \in \mathcal{M}_{\varphi,w}$ and its distribution function $d_f(\lambda)$.
\begin{Proposition}\label{prop:distfin}
	If $f \in \mathcal{M}_{\varphi,w}$, then the distribution function $d_f(\lambda) < \infty$ for all $\lambda >0$. The similar result holds for the sequence space $\mathfrak{m}_{\varphi,w}$.
\end{Proposition}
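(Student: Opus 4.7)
The plan is to argue by contradiction: if $d_f(\lambda_0) = \infty$ for some $\lambda_0 > 0$, the rearrangement-invariant structure of $\mathcal{M}_{\varphi, w}$ will force $\|f\|_{\mathcal{M}_{\varphi, w}} = \infty$. Note first that the conclusion is automatic when $\gamma < \infty$ because $d_f(\lambda) \leq \gamma$ always; the substantive case is $\gamma = \infty$.

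Assume $\gamma = \infty$ and, for contradiction, that $d_f(\lambda_0) = \infty$ for some $\lambda_0 > 0$. Directly from $f^*(t) = \inf\{\lambda > 0 : d_f(\lambda) \leq t\}$ we obtain $f^*(t) \geq \lambda_0$ for every $t \in [0, \infty)$, and hence $f^* \geq \lambda_0 \chi_{[0, s)}$ pointwise for every $s > 0$. Since $\mathcal{M}_{\varphi, w}$ is a rearrangement-invariant Banach function lattice, monotonicity and rearrangement-invariance of the norm yield
\[
\|f\|_{\mathcal{M}_{\varphi, w}} \;=\; \|f^*\|_{\mathcal{M}_{\varphi, w}} \;\geq\; \lambda_0 \, \phi_{\mathcal{M}_{\varphi, w}}(s) \qquad (s > 0).
\]
It therefore suffices to verify that $\phi_{\mathcal{M}_{\varphi, w}}(s) \to \infty$ as $s \to \infty$; this will contradict $\|f\|_{\mathcal{M}_{\varphi, w}} < \infty$.

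The required asymptotic comes from Lemma~\ref{lem:Mfund}, whose formula presents $\phi_{\mathcal{M}_{\varphi, w}}(s)$ in terms of the two factors $s/W(s)$ and $\varphi^{-1}(1/W(s))$. The standing assumption $W(\infty) = \infty$ together with continuity of $\varphi$ at the origin gives $\varphi^{-1}(1/W(s)) \to 0^+$ as $s \to \infty$. For the companion factor, choose $s_0 \geq 1$ with $0 < w(s_0) < \infty$ (which is possible because $w$ is decreasing, locally integrable, and $W(\infty) = \infty$). Then for $s \geq s_0$ the monotonicity of $w$ yields $W(s) \leq W(s_0) + w(s_0)(s - s_0)$, so $\liminf_{s \to \infty} s/W(s) \geq 1/w(s_0) > 0$. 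Combining these two estimates forces $\phi_{\mathcal{M}_{\varphi, w}}(s) \to \infty$, as required. The sequence-space case goes through verbatim with $\chi_{[0, s)}$ replaced by $\chi_{\{1, \dots, n\}}$ and $W(s)$ replaced by $\sum_{i=1}^{n} w(i)$. The only real technical point is the asymptotic estimate on the fundamental function just described, and it is handled entirely by the formula of Lemma~\ref{lem:Mfund}.
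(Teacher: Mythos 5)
Your argument is correct, and it reaches the same contradiction through a different door than the paper. The paper's proof stays at the level of the modular: for $g = \lambda\chi_{[0,a)}$ with $a = d_f(\lambda)$, it computes $Q_{\varphi,w}(kg) = \varphi(k\lambda a/W(a))W(a)$ via~(\ref{eq:charlevel}), bounds this below by $\varphi(k\lambda M_0)W(a)$ for a fixed $M_0 > 0$, and observes that $W(\infty) = \infty$ makes this quantity infinite when $a = \infty$, contradicting $Q_{\varphi,w}(kf) < \infty$. You instead pass through the norm, using rearrangement-invariance and the lattice property to get $\|f\|_{\mathcal{M}_{\varphi,w}} \geq \lambda_0\,\phi_{\mathcal{M}_{\varphi,w}}(s)$ for every $s$, and then show that $\phi_{\mathcal{M}_{\varphi,w}}(s) \to \infty$ via the formula in Lemma~\ref{lem:Mfund}. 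Since that formula is itself obtained from~(\ref{eq:charlevel}), the underlying computation is the same in both proofs; the paper's version is self-contained within the section, while yours delegates the growth estimate to the cited fundamental-function identity. One small caution: your final step (``$s/W(s)$ bounded below and $\varphi^{-1}(1/W(s))\to 0$ force $\phi_{\mathcal{M}_{\varphi,w}}(s)\to\infty$'') only goes through if the formula in Lemma~\ref{lem:Mfund} is read as the quotient $\phi_{\mathcal{M}_{\varphi,w}}(t) = \frac{t/W(t)}{\varphi^{-1}(1/W(t))}$. That is indeed the correct reading --- it is the one consistent with the lemma's own claim that $\lim_{t\to 0+}\phi_{\mathcal{M}_{\varphi,w}}(t)=0$, and it gives $\phi(t)=t$ when $\varphi(u)=u$, $w\equiv 1$, as it must since then $\mathcal{M}_{\varphi,w}=L_1$ --- but the displayed double fraction can be misparsed as the product $\frac{t}{W(t)}\varphi^{-1}(1/W(t))$, for which ``bounded below times vanishing'' is indeterminate. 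Your reasoning correctly presupposes the quotient form; it would simply be worth writing it out explicitly.
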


\begin{proof}
	Let $f \in \mathcal{M}_{\varphi,w}$. Then $Q_{\varphi,w}(k f) < \infty$ for some $k > 0$. Since $f^*$ is equimeasurable to $f$, we have $d_{f^*}(\lambda) = d_{f}(\lambda)$ for all $\lambda > 0$. Given $\lambda >0$, define $g = \lambda \chi_{[0, a)}$ where $a = d_{f^*}(\lambda)$. Notice that $g \leq f^*$ a.e. Then $\frac{g^0}{w} \leq \frac{(f^*)^0}{w}$ a.e. by Proposition \ref{prop:Halmono}. From (\ref{eq:charlevel}), we see that
	
	\begin{multline}\label{eq:distinf}
		\varphi\left(\frac{k\lambda a}{W(a)}\right) W(a) = Q_{\varphi,w}(kg) = \int_I \varphi\left(\frac{k \lambda(\chi_{(0, a)})^0}{w}\right)w\\
		\leq \int_I \varphi\left(\frac{k(f^*)^0}{w}\right)w = Q_{\varphi,w}(k f) < \infty.
	\end{multline}
	
	\normalsize
	Since the function $t \mapsto \frac{t}{W(t)}$ is increasing, there exists $a_0 \in (0, a)$ such that $ M_0  = \frac{a_0}{W(a_0)}\leq \frac{a}{W(a)}$. Then we have 
	
	\[
	\varphi(k\lambda M_0)W(a)\leq \varphi\left(\frac{k\lambda a}{W(a)}\right) W(a).  
	\]  
	
	Now suppose that there exists $\lambda > 0$ such that $d_{f}(\lambda) = d_{f^*}(\lambda) = \infty$. From the assumption that $W(\infty) = \infty$, we see that $\varphi(k\lambda M_0)W(a) = \infty$. This is a contradiction because $Q_{\varphi,w}(k f) < \infty$. Therefore, $d_{f}(\lambda) < \infty$ for all $\lambda > 0$. 
\end{proof}

Also we need to examine the relationship between the norm $\|\cdot\|_{\mathcal{M}_{\varphi,w}}$ and the modular $P_{\varphi,w}$.

\begin{Proposition}\label{lem:basiclux}
	If $\|f\|_{\mathcal{M}_{\varphi,w}} \leq 1$, then $P_{\varphi,w}(f) \leq Q_{\varphi,w}(f) \leq  \|f\|_{\mathcal{M}_{\varphi,w}}$. The analogous statement for the space $\mathfrak{m}_{\varphi,w}$ also holds. 
\end{Proposition}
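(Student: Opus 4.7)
The first inequality is already on record: the discussion preceding Theorem~\ref{th:KLR} notes that $P_{\varphi,w}(f)\le Q_{\varphi,w}(f)$ in general (via \cite[Theorem 8.9]{KR2}), so the task reduces to showing $Q_{\varphi,w}(f)\le\|f\|_{\mathcal{M}_{\varphi,w}}$ under the standing assumption $\|f\|_{\mathcal{M}_{\varphi,w}}\le 1$. Set $\epsilon:=\|f\|_{\mathcal{M}_{\varphi,w}}$. The case $\epsilon=0$ forces $f=0$ a.e.\ and is trivial, so I assume throughout that $0<\epsilon\le 1$.

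My strategy has two steps: first attain the infimum in the norm definition, then exploit convexity. Pick $\epsilon_n\downarrow\epsilon$ with $Q_{\varphi,w}(f/\epsilon_n)\le 1$ for every $n$. I will use the positive homogeneity of Halperin's level function, namely $(\lambda f^*)^0=\lambda(f^*)^0$ for $\lambda>0$; this holds because multiplying $f^*$ by $\lambda$ scales the governing ratio $F(a,t)/W(a,t)$ by the same factor, leaving the maximal level intervals unchanged. Consequently
\[
Q_{\varphi,w}(f/\epsilon_n)=\int_I\varphi\!\left(\frac{(f^*)^0}{\epsilon_n w}\right)w,
\]
and since $\varphi$ is continuous and nondecreasing, the integrand increases monotonically as $\epsilon_n\downarrow\epsilon$. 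Monotone convergence yields $Q_{\varphi,w}(f/\epsilon)\le 1$.

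For the convexity step, $\varphi(0)=0$ together with convexity of $\varphi$ gives $\varphi(\epsilon u)\le\epsilon\,\varphi(u)$ for every $u\ge 0$, thanks to $\epsilon\le 1$. Therefore
\[
Q_{\varphi,w}(f)=\int_I\varphi\!\left(\epsilon\cdot\frac{(f^*)^0}{\epsilon w}\right)w\;\le\;\epsilon\int_I\varphi\!\left(\frac{(f^*)^0}{\epsilon w}\right)w\;=\;\epsilon\,Q_{\varphi,w}(f/\epsilon)\;\le\;\epsilon,
\]
which is the desired bound. The sequence case is identical, with the integral replaced by a sum over $\mathbb{N}$.

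The point that requires the most care is the homogeneity identity $(\lambda f^*)^0=\lambda(f^*)^0$: while Proposition~\ref{prop:Halmono} records only monotonicity and monotone convergence of Halperin's level function, the positive homogeneity needed here should be verified directly from the definition of maximal level interval (or deduced, via the identity~\eqref{eq:important}, from the corresponding and well-documented property of Sinnamon's level function $f^s$).
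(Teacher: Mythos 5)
Your proof is correct and follows essentially the same route as the paper: take $\epsilon_n \downarrow \|f\|_{\mathcal{M}_{\varphi,w}}$ with $Q_{\varphi,w}(f/\epsilon_n)\le 1$, pass to the limit by monotone convergence to get $Q_{\varphi,w}(f/\|f\|_{\mathcal{M}_{\varphi,w}})\le 1$, and then use $\varphi(\epsilon u)\le\epsilon\varphi(u)$ for $\epsilon\le 1$ together with $P_{\varphi,w}\le Q_{\varphi,w}$. The one small stylistic difference is that you make explicit the positive homogeneity $(\lambda f^*)^0=\lambda(f^*)^0$ of Halperin's level function (which is correct, as level intervals are invariant under scaling of $f$), whereas the paper uses this identity tacitly while citing only the monotone-convergence property from Proposition~\ref{prop:Halmono}; this extra care on your part is a minor improvement, not a deviation.
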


\begin{proof}
	Let $f \in \mathcal{M}_{\varphi,w}$. From the definition of the Luxemburg norm, there exists a sequence of real numbers $(\lambda_n) \downarrow \|f\|_{\mathcal{M}_{\varphi,w}}$ such that $Q_{\varphi,w}\left(\frac{f}{\lambda_n}\right) \leq 1$. Then $\frac{f^*}{\lambda_n} \uparrow \frac{f^*}{\|f\|_{\mathcal{M}_{\varphi,w}}}$ a.e. Furthermore, $\frac{(f^*)^0}{\lambda_n} \uparrow \frac{(f^*)^0}{\|f\|_{\mathcal{M}_{\varphi,w}}}$ a.e. from Proposition \ref{prop:Halmono}. So $\varphi\left(\frac{(f^*)^0}{\lambda_n}\right)w \uparrow \varphi\left(\frac{(f^*)^0}{\|f\|_{\mathcal{M}_{\varphi,w}}}\right)w$ a.e. By the Monotone Convergence Theorem,  
	\[
	Q_{\varphi,w}\left(\frac{f}{\|f\|_{\mathcal{M}_{\varphi,w}}}\right) = \int_I \varphi\left(\frac{(f^*)^0}{\|f\|_{\mathcal{M}_{\varphi,w}}}\right)w = \lim_{n\rightarrow \infty} \int_I \varphi\left(\frac{(f^*)^0}{\lambda_n}\right)w \leq 1.
	\]
Hence by the convexity of $\varphi$, we obtain 
	\[
	\frac{P_{\varphi,w}(f)}{\|f\|_{\mathcal{M}_{\varphi,w}}} \leq \frac{Q_{\varphi,w}(f)}{\|f\|_{\mathcal{M}_{\varphi,w}}}\leq Q_{\varphi,w}\left(\frac{f}{\|f\|_{\mathcal{M}_{\varphi,w}}}\right) \leq 1.
	\]  
\end{proof}

\begin{Proposition}\label{modnormeq}
	Let $(f_n)_{n=1}^{\infty}$ be a sequence in $\mathcal{M}_{\varphi,w}$. Then, $\lim_{n \rightarrow \infty} \|f_n\|_{\mathcal{M}_{\varphi,w}} = 0$ if and only if $\lim_{n \rightarrow \infty} P_{\varphi,w}(k f_n) = \lim_{n \rightarrow \infty} Q_{\varphi,w}(k f_n) = 0$ for every $k > 0$. The analogous statement holds for the space $\mathfrak{m}_{\varphi,w}$.
\end{Proposition}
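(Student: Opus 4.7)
The plan is to use Proposition \ref{lem:basiclux} for the forward direction and the definition of the Luxemburg norm (in its $Q_{\varphi,w}$-form) for the reverse direction, handling only the function-space case since the sequence case is essentially identical.

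For the forward direction, I assume $\|f_n\|_{\mathcal{M}_{\varphi,w}} \to 0$ and fix an arbitrary $k > 0$. By positive homogeneity, $\|kf_n\|_{\mathcal{M}_{\varphi,w}} = k\|f_n\|_{\mathcal{M}_{\varphi,w}} \to 0$, so for all $n$ sufficiently large (depending on $k$) one has $\|kf_n\|_{\mathcal{M}_{\varphi,w}} \leq 1$. Proposition \ref{lem:basiclux} then yields
\[
P_{\varphi,w}(kf_n) \leq Q_{\varphi,w}(kf_n) \leq \|kf_n\|_{\mathcal{M}_{\varphi,w}},
\]
and since the right side tends to $0$, both modular sequences tend to $0$ as well. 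Note that the hypothesis must hold for \emph{every} $k > 0$, which is obtained because the threshold $\|kf_n\|_{\mathcal{M}_{\varphi,w}} \leq 1$ is reached eventually for each individual $k$.

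For the reverse direction, since $P_{\varphi,w} \leq Q_{\varphi,w}$ in general (cited from \cite[Theorem 8.9]{KR2} earlier in the paper), it suffices to use the hypothesis $\lim_n Q_{\varphi,w}(kf_n) = 0$ for every $k > 0$. Given $\epsilon > 0$, I apply this with $k = 1/\epsilon$ to get $Q_{\varphi,w}(f_n/\epsilon) \to 0$; thus for all $n$ sufficiently large, $Q_{\varphi,w}(f_n/\epsilon) \leq 1$. By the very definition of the Luxemburg norm expressed through $Q_{\varphi,w}$, this gives $\|f_n\|_{\mathcal{M}_{\varphi,w}} \leq \epsilon$, and letting $\epsilon \to 0$ closes the argument.

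The sequence version follows the identical pattern with $\mathfrak{p}_{\varphi,w}, \mathfrak{q}_{\varphi,w}$ in place of $P_{\varphi,w}, Q_{\varphi,w}$, using the sequence-space analogue of Proposition \ref{lem:basiclux}. There is no substantive obstacle here, since both structural ingredients needed—the sandwich $P_{\varphi,w} \leq Q_{\varphi,w} \leq \|\cdot\|_{\mathcal{M}_{\varphi,w}}$ on the unit ball and the dual description of the Luxemburg norm as the $Q_{\varphi,w}$-gauge—are already established.
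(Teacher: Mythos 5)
Your proof is correct and follows essentially the same route as the paper's: Proposition \ref{lem:basiclux} for the forward implication and the $Q_{\varphi,w}$-gauge description of the Luxemburg norm for the converse. Your forward step is in fact slightly cleaner than the paper's, which only invokes the weaker consequence $Q_{\varphi,w}(ckf_n)\le 1$ and then rescales via convexity to extract $Q_{\varphi,w}(kf_n)\le 1/c$, whereas you use the full sandwich $Q_{\varphi,w}(kf_n)\le\|kf_n\|_{\mathcal{M}_{\varphi,w}}$ directly and let the norm itself go to zero.
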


\begin{proof}
	Suppose that $\lim_{n \rightarrow \infty} \|f_n\|_{\mathcal{M}_{\varphi,w}} = 0$ and fix $k > 0$. Then for every $c >\max\{1, \frac{1}{k}\}$, there exists $N(c) \in \mathbb{N}$ such that for every $n \geq N(c)$, $\|c k f_n\|_{\mathcal{M}_{\varphi,w}} \leq 1$. By Proposition \ref{lem:basiclux}, this implies that $Q_{\varphi,w}(c k f_n) \leq 1$ for every $n \geq N(c)$. Now by the convexity of $\varphi$, we see that $c P_{\varphi,w}(k f_n) \leq c Q_{\varphi,w}(k f_n) \leq Q_{\varphi,w}(c k f_n) \leq 1$, and so $P_{\varphi,w}(k f_n) \leq Q_{\varphi,w}(k f_n) \leq \frac{1}{c}$ for every $n \geq N(c)$. Since $c>\max\{1, \frac{1}{k}\}$ is arbitrary, we obtain $\lim_{n \rightarrow \infty} P_{\varphi,w}(k f_n) = \lim_{n \rightarrow \infty} Q_{\varphi,w}(k f_n) = 0$ for every $k >0$.
	
	To show the converse, suppose that for every $k >0$, $\lim_{n \rightarrow \infty} P_{\varphi,w}(k f_n) = \lim_{n \rightarrow \infty} Q_{\varphi,w}(k f_n) = 0$. Then for every $c > \max\{1, \frac{1}{k}\}$, there exists  $N(c)$ such that for every $n \geq N(c)$, $Q_{\varphi,w}(k f_n) \leq \frac{1}{c}$. By the convexity of $Q_{\varphi,w}$ and the definition of $\|\cdot\|_{\mathcal{M}_{\varphi,w}}$, this implies that $\|f_n\|_{\mathcal{M}_{\varphi,w}} \leq \frac{1}{ck}$ for every $n \geq N(c)$. Since $c > \max\{1, \frac{1}{k}\}$ is arbitrary, we have $\lim_{n \rightarrow \infty} \|f_n\|_{\mathcal{M}_{\varphi,w}} = 0$. 
	
	The sequence case can be proven by the similar argument, so we omit the proof.       
\end{proof}  

\begin{Proposition}\label{lem:pbounded}
	Let $f \in L_0$ be a bounded function with support of finite measure. Then $P_{\varphi,w}(k f) \leq Q_{\varphi,w}(k f) < \infty$ for every $k >0$. The analogous statement also holds for a sequence $x \in \ell_0$ with finite entries. 
\end{Proposition}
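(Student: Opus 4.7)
The inequality $P_{\varphi,w}(kf)\le Q_{\varphi,w}(kf)$ is already recorded in the excerpt (from \cite[Theorem 8.9]{KR2}), so the only real content of this proposition is showing $Q_{\varphi,w}(kf)<\infty$ when $f$ is bounded with support of finite measure. The plan is to dominate $f^*$ by a single characteristic function, apply the monotonicity of Halperin's level operator (Proposition \ref{prop:Halmono}), and then invoke the explicit formula (\ref{eq:charlevel}) for the level function of a characteristic function to reduce the integral defining $Q_{\varphi,w}(kf)$ to a single evaluation of $\varphi$.

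More precisely, suppose $|f|\le M$ and $a:=\mu(\supp f)<\infty$. Then $f^*\le M\chi_{[0,a)}$ pointwise on $I$, so by Proposition \ref{prop:Halmono} we have $(f^*)^0 \le (M\chi_{[0,a)})^0$ a.e. Dividing by $w$ and using (\ref{eq:charlevel}), I get
\[
\frac{(f^*)^0(t)}{w(t)} \;\le\; \frac{M a}{W(a)}\,\chi_{(0,a)}(t)\qquad\text{a.e.}
\]
Since $\varphi$ is increasing with $\varphi(0)=0$, applying $\varphi$ to $k$ times both sides and integrating against $w$ gives
\[
Q_{\varphi,w}(kf) \;=\; \int_I \varphi\!\left(\frac{k(f^*)^0}{w}\right)w \;\le\; \varphi\!\left(\frac{kMa}{W(a)}\right) W(a),
\]
which is finite because $W(a)<\infty$ (as $a<\infty$ and $w$ is locally integrable) and the argument of $\varphi$ is finite. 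Combined with $P_{\varphi,w}(kf)\le Q_{\varphi,w}(kf)$, this gives the claim for every $k>0$.

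For the sequence case, if $x\in\ell_0$ has only finitely many nonzero entries, say $|x(i)|\le M$ for all $i$ and $x(i)=0$ for $i>n$, then $x^*\le M\chi_{\{1,\dots,n\}}$ and the identical argument (using the sequence version of Proposition \ref{prop:Halmono} and the counting-measure analogue of (\ref{eq:charlevel})) produces a finite upper bound for $\mathfrak{q}_{\varphi,w}(kx)$.

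The only place that requires a moment's care is the monotonicity step $(f^*)^0\le (M\chi_{[0,a)})^0$: Halperin's construction via maximal level intervals is not obviously monotone from the definition, but this is exactly the content of Proposition \ref{prop:Halmono} proved earlier via Sinnamon's identity (\ref{eq:important}). Once this is invoked, everything reduces to the explicit computation in (\ref{eq:charlevel}), so I do not anticipate any genuine obstacle.
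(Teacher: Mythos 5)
Your proof is correct and follows essentially the same argument as the paper: dominate $f^*$ by $M\chi_{[0,a)}$, apply Proposition \ref{prop:Halmono} to get $(f^*)^0\le (M\chi_{[0,a)})^0$, and then use the explicit formula (\ref{eq:charlevel}) to evaluate the bound as $\varphi(kMa/W(a))\,W(a)<\infty$. The only differences are notational (the paper writes $c$ and $m(\supp f^*)$ for your $M$ and $a$).
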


\begin{proof}
	Let $f \in L_0$ be a bounded function with support of finite measure, that is $|f(t)| \le c$ a.e. on $I$ for some $c>0$, and let $k >0$. Since $f$ is equimeasurable to $f^*$, we see that $f^* \leq c$ a.e. on $(0, m(\supp{f^*}))$ where both $c$ and $m(\supp{f^*})$ are finite. By Proposition \ref{prop:Halmono}, we have $\varphi\left(\frac{k(f^*)^0}{w}\right)w \leq \varphi\left(\frac{k c(\chi_{(0, m(\supp{f^*}))})^0}{w}\right)w$ a.e. Hence, we have from (\ref{eq:charlevel}) that
	\begin{eqnarray*}
	P_{\varphi,w}(kf) \leq Q_{\varphi,w}(kf) = \int_I\varphi\left(\frac{k(f^*)^0}{w}\right)w &\leq& \int_I\varphi\left(\frac{k c(\chi_{(0, m(\supp{f^*}))})^0}{w}\right)w\\
	&=& \varphi\left(\frac{k c (m(\supp{f^*}))}{W(m(\supp{f^*}))}\right)W(m(\supp{f^*})) < \infty,
	\end{eqnarray*}
	and so $P_{\varphi,w}(k f) \leq Q_{\varphi,w}(k f) < \infty$ for every $k >0$. 
\end{proof}

We recall the following auxillary lemma that will be useful  later.

\begin{Lemma}\cite[Lemma 6.5]{FLM}\label{FLM}
	If $0 \leq f_n \leq f = f^* \in \mathcal{M}_{\varphi,w}$ such that $f_n = f_n^*$ for every $n \in \mathbb{N}$, $m(\supp{f_n}) < \infty$, and $f_n \rightarrow 0$ a.e., then $f_n^0 \rightarrow 0$ a.e. The same result holds for the sequence case.
\end{Lemma}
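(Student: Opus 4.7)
The plan is to reduce to a monotonically decreasing sequence and then exploit the modular finiteness $Q_{\varphi,w}(kf)<\infty$ via Jensen's inequality to construct an explicit $w$-affine majorant of $F_n(s)=\int_0^s g_n$ whose value at $t$ can be made arbitrarily small. First I would set $g_n=\sup_{k\ge n}f_k$, which is still decreasing in $t$ (being a supremum of decreasing functions), is dominated by $f$, and decreases pointwise to $\limsup_k f_k=0$ a.e. By the monotonicity of Halperin's level function (Proposition~\ref{prop:Halmono}), $f_n^0\le g_n^0$ and $(g_n^0)$ itself decreases in $n$ to some $h\ge 0$, so it suffices to prove $h=0$ a.e. Writing $F_n(s)=\int_0^s g_n$, we have $\int_0^t g_n^0=F_n^b(t)$, where $F_n^b$ is the least $w$-concave majorant of $F_n$; hence by Fatou's lemma the problem reduces to showing $F_n^b(t)\to 0$ for each $t\in I$.

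The heart of the argument is the averaging estimate $F(T)/W(T)\to 0$ as $T\to\infty$, where $F(s)=\int_0^s f$. Since $f=f^*\in\mathcal{M}_{\varphi,w}$, choose $k>0$ with $Q_{\varphi,w}(kf)=\int_I\varphi(kf^0/w)w<\infty$. Applying Jensen's inequality to the probability measure $w\,ds/W(T)$ on $[0,T]$, together with $F^b(T)\ge F(T)$, yields
\[
\varphi\!\left(\frac{kF(T)}{W(T)}\right)W(T)\le \int_0^T\varphi\!\left(\frac{kf^0}{w}\right)w\le Q_{\varphi,w}(kf),
\]
so $\varphi(kF(T)/W(T))\le Q_{\varphi,w}(kf)/W(T)\to 0$, and $F(T)/W(T)\to 0$ follows from the continuity of $\varphi^{-1}$ at the origin together with the standing assumption $W(\infty)=\infty$.

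Given $\epsilon>0$, I would then pick $T_0>t$ with $F(T)\le\epsilon W(T)$ for all $T\ge T_0$, and define $G_n(s)=\epsilon W(s)+F_n(T_0)$. This $G_n$ is $w$-affine and thus $w$-concave. The inequality $G_n\ge F_n$ on $I$ is verified in two pieces: for $s\le T_0$ it follows from $F_n$ being increasing, while for $s>T_0$ the domination $g_n\le f$ gives $F_n(s)-F_n(T_0)\le F(s)-F(T_0)\le\epsilon W(s)$. Consequently $F_n^b(t)\le G_n(t)=\epsilon W(t)+F_n(T_0)$, and since $g_n\downarrow 0$ on $[0,T_0]$ with $f\in L^1[0,T_0]$ (by K\"othe duality, as $\chi_{[0,T_0]}\in\Lambda_{\varphi_*,w}^0$), monotone convergence forces $F_n(T_0)\to 0$. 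Hence $\limsup_nF_n^b(t)\le \epsilon W(t)$, and letting $\epsilon\to 0$ completes the proof; the sequence case is entirely analogous with $W(T)$ replaced by $\sum_{i=1}^T w(i)$. The main obstacle is precisely this averaging estimate: passing from the modular finiteness $Q_{\varphi,w}(kf)<\infty$ to a quantitative control on ordinary integral averages is exactly where Jensen on the weighted measure and $W(\infty)=\infty$ intervene, and once it is in hand, the $w$-affine majorant construction and Fatou step are routine.
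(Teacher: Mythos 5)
The paper does not prove this lemma itself: it is stated by citation to \cite[Lemma 6.5]{FLM}, so there is no in-paper argument to compare against. Your blind proof is a self-contained argument, and I believe it is correct. The reduction to the monotone envelope $g_n=\sup_{k\ge n}f_k$ is sound (a countable supremum of decreasing functions is decreasing, $g_n\le f$, and $g_n\downarrow 0$ a.e.), and Proposition~\ref{prop:Halmono} then gives $f_n^0\le g_n^0\downarrow h\ge 0$. The identification $\int_0^t g_n^0 = F_n^b(t)$ follows from \eqref{eq:important} together with the defining property of the Sinnamon level function. The Jensen estimate
\[
\varphi\!\left(\frac{kF(T)}{W(T)}\right)W(T)\le \int_0^T\varphi\!\left(\frac{kf^0}{w}\right)w\le Q_{\varphi,w}(kf)
\]
is correct once one inserts the intermediate step $\int_0^T f^0 = F^b(T)\ge F(T)$ and the monotonicity of $\varphi$; combined with $W(\infty)=\infty$ it gives $F(T)/W(T)\to 0$. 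The verification that $G_n(s)=\epsilon W(s)+F_n(T_0)$ dominates $F_n$ is correct in both ranges, and the passage to the limit in $n$ and then $\epsilon$ is clean.

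Two small remarks. First, your presentation is tailored to $\gamma=\infty$: when $\gamma<\infty$ the limit $T\to\infty$ has no meaning, but the argument is then even simpler---$f\in L^1[0,\gamma)$ by the same Jensen bound at $T=\gamma$, the constant $G_n\equiv F_n(\gamma)$ is a $w$-concave majorant of $F_n$, and $F_n(\gamma)\to 0$ by dominated convergence, so $F_n^b(t)\le F_n(\gamma)\to 0$. A one-line remark would tidy this up. Second, your proof never uses the hypothesis $m(\supp f_n)<\infty$: local integrability of $g_n$ is already guaranteed by $g_n\le f$ with $\int_0^{T}f\le F^b(T)<\infty$. That hypothesis is presumably an artifact of the proof route in \cite{FLM}; your argument shows it can be dropped. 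Also, the appeal to K\"othe duality for $f\in L^1[0,T_0]$ is fine but unnecessary, since $\int_0^{T_0}f\le F^b(T_0)<\infty$ already follows from the Jensen bound you established.
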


We are ready to provide an explicit description of the order-continuous subspace of $\mathcal{M}_{\varphi,w}$. 

\begin{Theorem}\label{Order}
	For the function space $\mathcal{M}_{\varphi,w}$,  
	\begin{eqnarray*}
	(\mathcal{M}_{\varphi,w})_a = (\mathcal{M}_{\varphi,w})_b &=& \{ f \in L_0 : P_{\varphi,w}(k f) < \infty \,\,\, \text{for all} \,\,\, k>0\}\\ 
	&=& \{ f \in L_0 : Q_{\varphi,w}(k f) < \infty \,\,\, \text{for all} \,\,\, k>0\}. 
	\end{eqnarray*}
	For the sequence space $\mathfrak{m}_{\varphi,w}$, 
	\begin{eqnarray*}
		(\mathfrak{m}_{\varphi,w})_a = (\mathfrak{m}_{\varphi,w})_b &=& \{ x \in \ell_0 : \mathfrak{p}_{\varphi,w}(k x) < \infty \,\,\, \text{for all} \,\,\, k >0\}\\
		&=& \{ x \in \ell_0 : \mathfrak{q}_{\varphi,w}(k x) < \infty \,\,\, \text{for all} \,\,\, k >0\}.
	\end{eqnarray*}
\end{Theorem}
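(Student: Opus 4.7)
The plan is to prove the cycle of inclusions
\[
\mathcal{B}\subseteq\mathcal{A}\subseteq(\mathcal{M}_{\varphi,w})_a=(\mathcal{M}_{\varphi,w})_b\subseteq\mathcal{B},
\]
with $\mathcal{A}=\{f\in L_0:P_{\varphi,w}(kf)<\infty\text{ for all }k>0\}$ and $\mathcal{B}=\{f\in L_0:Q_{\varphi,w}(kf)<\infty\text{ for all }k>0\}$. The central equality $(\mathcal{M}_{\varphi,w})_a=(\mathcal{M}_{\varphi,w})_b$ I would deduce from $\lim_{t\to 0^+}\phi_{\mathcal{M}_{\varphi,w}}(t)=0$ (Lemma \ref{lem:Mfund}) via \cite[Theorem 2.5.5]{BS} in the function case, and from \cite[Theorem 2.5.4]{BS} in the sequence case; the inclusion $\mathcal{B}\subseteq\mathcal{A}$ is immediate from the always-true bound $P_{\varphi,w}\leq Q_{\varphi,w}$.

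For $\mathcal{A}\subseteq(\mathcal{M}_{\varphi,w})_a$, I would take $f\in\mathcal{A}$ together with any sequence $0\leq f_n\leq|f|$ with $f_n\downarrow 0$ a.e., and prove $\|f_n\|_{\mathcal{M}_{\varphi,w}}\to 0$. By Proposition \ref{modnormeq} it is enough to show $P_{\varphi,w}(kf_n)\to 0$ for every $k>0$. Since $f\in\mathcal{M}_{\varphi,w}$, Proposition \ref{prop:distfin} provides $d_f(\lambda)<\infty$ for every $\lambda>0$, so Lemma \ref{convto0dec} applied to $g_n=|f|-f_n\uparrow|f|$ yields $f_n^*\to 0$ a.e. Fix $k>0$; since $P_{\varphi,w}(kf)<\infty$, pick a decreasing $v\prec w$ with $\int_I\varphi(kf^*/v)\,v<\infty$. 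The domination $\varphi(kf_n^*/v)\leq\varphi(kf^*/v)$ together with $\varphi(kf_n^*/v)\to 0$ a.e.\ justifies dominated convergence, giving $P_{\varphi,w}(kf_n)\leq\int_I\varphi(kf_n^*/v)\,v\to 0$.

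For $(\mathcal{M}_{\varphi,w})_b\subseteq\mathcal{B}$, I would fix $f\in(\mathcal{M}_{\varphi,w})_b$ and $k>0$, and choose a simple $s$ with $\|f-s\|_{\mathcal{M}_{\varphi,w}}\leq 1/(2k)$. Proposition \ref{lem:basiclux} then yields $Q_{\varphi,w}(2k(f-s))\leq 1$, while Proposition \ref{lem:pbounded} gives $Q_{\varphi,w}(2ks)<\infty$. Writing $kf=\tfrac{1}{2}(2k(f-s))+\tfrac{1}{2}(2ks)$ and invoking convexity of the modular $Q_{\varphi,w}$ (built into the Luxemburg-norm construction in \cite[Theorem 8.8]{KR2}) produces
\[
Q_{\varphi,w}(kf)\leq\tfrac{1}{2}Q_{\varphi,w}(2k(f-s))+\tfrac{1}{2}Q_{\varphi,w}(2ks)<\infty.
\]
The sequence-space assertions are proven by the same arguments, with integrals replaced by sums and the cited results replaced by their sequence counterparts.

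The main obstacle is this last convex-combination step: absent a $\Delta_2$-hypothesis, the norm bound $\|kf\|_{\mathcal{M}_{\varphi,w}}<\infty$ does not by itself imply $Q_{\varphi,w}(kf)<\infty$, so one cannot merely use the triangle inequality and must instead combine the \emph{modular}-level bounds for the simple approximant $s$ and its error $f-s$, which requires convexity of $Q_{\varphi,w}$ rather than mere superadditivity on disjoint supports (the latter would go the wrong way).
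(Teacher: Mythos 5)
Your proof is correct and rests on the same ingredients the paper uses (the vanishing fundamental function via Lemma \ref{lem:Mfund}, Propositions \ref{lem:basiclux}, \ref{lem:pbounded}, \ref{prop:distfin}, Lemma \ref{convto0dec}, convexity of the modular, dominated convergence against a decreasing $v\prec w$), but it is organized more economically. The paper establishes $(\mathcal{M}_{\varphi,w})_a=(\mathcal{M}_{\varphi,w})_b\subseteq\{Q_{\varphi,w}(k\cdot)<\infty\ \forall k\}\subseteq\{P_{\varphi,w}(k\cdot)<\infty\ \forall k\}$ and then, to close the loop, separately proves both $\{Q<\infty\}\subseteq(\mathcal{M}_{\varphi,w})_b$ (via truncations $f_n=f\chi_{\{1/n<|f|<n\}}$, the level-function convergence Lemma \ref{FLM}, and Proposition \ref{prop:Halmono}) and $\{P<\infty\}\subseteq(\mathcal{M}_{\varphi,w})_b$ (via a witness $v_\epsilon$); the former is, strictly speaking, redundant once the latter and the inclusion chain are in place. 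You close the cycle with the single inclusion $\{P<\infty\}\subseteq(\mathcal{M}_{\varphi,w})_a$, verified directly from the definition of order-continuity against an arbitrary $f_n\downarrow 0$ dominated by $|f|$: dominating $\varphi(kf_n^*/v)v$ by $\varphi(kf^*/v)v$ for a single witness $v$ replaces the paper's dominant $\varphi(k(f^*)^0/w)w$, so you bypass Lemma \ref{FLM} entirely, which is a genuine simplification. Similarly, bounding $Q_{\varphi,w}(2k(f-s))\le 1$ for a single approximant $s$ via Proposition \ref{lem:basiclux} is a slightly cleaner route to $(\mathcal{M}_{\varphi,w})_b\subseteq\{Q<\infty\}$ than the paper's appeal to an approximating sequence and Proposition \ref{modnormeq}. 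One small presentational point: when you cite Proposition \ref{modnormeq} to reduce to $P_{\varphi,w}(kf_n)\to 0$, that proposition as stated requires both modular limits; it is tidier (and what the paper itself implicitly does in its $\{P<\infty\}$ inclusion) to note that $P_{\varphi,w}(kf_n)\to 0$ for every $k>0$ forces $\|f_n\|_{\mathcal{M}_{\varphi,w}}\to 0$ directly from the $P$-description of the Luxemburg norm. No gaps.
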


\begin{proof}
	We already have $(\mathcal{M}_{\varphi,w})_a = (\mathcal{M}_{\varphi,w})_b$ from Lemma \ref{lem:Mfund} and \cite[Theorem 2.5.5]{BS}, so we only have to show the second and third equalities of the statement. Let $f \in  (\mathcal{M}_{\varphi,w})_b$. Then there exists a sequence $(f_n)_{n=1}^{\infty}$ of simple functions with supports of finite measure such that $\|f- f_n\|_{\mathcal{M}_{\varphi,w}} \rightarrow 0$. From Proposition \ref{modnormeq}, we see that $Q_{\varphi,w}(2k(f - f_n)) \rightarrow 0$ for every $k >0$. Hence for given $\epsilon >0$, there exists $n\in \mathbb{N}$ such that $Q_{\varphi,w}(2k(f - f_n)) < \epsilon$. By the convexity of $Q_{\varphi,w}$ and Proposition \ref{lem:pbounded}, we obtain
	\begin{eqnarray*}
		P_{\varphi,w}(k f) \leq Q_{\varphi,w}(k f) = Q_{\varphi,w}\left(\frac{2k f_n + 2k (f -f_n)}{2}\right) &\leq& \frac{1}{2} Q_{\varphi,w}(2k f_n) + \frac{1}{2} Q_{\varphi,w}(2k (f-f_n))\\
		&<& \frac{1}{2}Q_{\varphi,w}(2k f_n)  + \epsilon < \infty.   
	\end{eqnarray*}
	Thus, 
	\begin{eqnarray*}
	(\mathcal{M}_{\varphi,w})_a = (\mathcal{M}_{\varphi,w})_b &\subset& \{ f \in L_0 : Q_{\varphi,w}(k f) < \infty \,\,\, \text{for all} \,\,\, k >0\}\\
	&\subset& \{ f \in L_0 : P_{\varphi,w}(k f) < \infty \,\,\, \text{for all} \,\,\, k >0\}.
	\end{eqnarray*}
	
	Now, suppose $f \in L_0$ such that $Q_{\varphi,w}(k f) < \infty$ for every $k >0$. Define $f_n = f \chi_{\{\frac{1}{n} < |f| < n\}}$. From Proposition \ref{prop:distfin}, $m\{\frac{1}{n} < |f| < n\} \leq m \{ |f| > \frac{1}{n}\} < \infty$, so $f_n \in (\mathcal{M}_{\varphi,w})_b$ for all $n \in \mathbb{N}$. 
	Since $|f_n| \leq |f|$ a.e. and $f - f_n \rightarrow 0$ a.e., we have $(f - f_n)^* \rightarrow 0$ a.e. by Theorem \ref{convto0dec}.  Moreover,  $(f- f_n)^* \leq f^*$ a.e. \cite[Proposition 1.7]{BS} and so $((f - f_n)^*)^0 \rightarrow 0$ a.e. and $((f - f_n)^*)^0 \leq (f^*)^0$ a.e. in view of Lemma \ref{FLM} and Proposition \ref{prop:Halmono}. Hence for every $k >0$, $\varphi\left(\frac{k((f - f_n)^*)^0}{w}\right)w \rightarrow 0$ a.e. and $\varphi\left(\frac{k((f - f_n)^*)^0}{w}\right)w \leq \varphi\left(\frac{k((f^*)^0}{w}\right)w$ a.e. By the Lebesgue Dominated Convergence Theorem, we have $Q_{\varphi,w}(k(f-f_n)) = \int_I \varphi\left(\frac{k((f - f_n)^*)^0}{w}\right)w \rightarrow 0$. Then $\|f - f_n\|_{\mathcal{M}_{\varphi,w}} \rightarrow 0$ by Proposition \ref{modnormeq}, and this shows that $\{ f \in L_0 : Q_{\varphi,w}(k f) < \infty \,\,\, \text{for all} \,\,\, k >0\} \subset (\mathcal{M}_{\varphi,w})_a = (\mathcal{M}_{\varphi,w})_b$.
	
	For $f \in L_0$ such that $P_{\varphi,w}(kf) < \infty$ for every $k > 0$, we define the functions $f_n \in (\mathcal{M}_{\varphi,w})_b$ as before. Then for every $\epsilon > 0$, there exists $v_{\epsilon}\prec w$ such that 
		\[
		\int_{I}\varphi\left(\frac{kf^*}{v_{\epsilon}}\right)v_{\epsilon} \leq P_{\varphi,w}(kf) + \epsilon < \infty.
		\]
	From the fact that $|f - f_n| \leq |f|$ a.e., we have $(f - f_n)^* \leq f^*$ a.e. \cite[Proposition 1.7]{BS}, and so $\int_{I}\varphi\left(\frac{k(f - f_n)^*}{v_{\epsilon}}\right)v_{\epsilon} \leq \int_{I}\varphi\left(\frac{kf^*}{v_{\epsilon}}\right)v_{\epsilon} < \infty$. Since $\varphi\left(\frac{k(f - f_n)^*}{v_{\epsilon}}\right)v_{\epsilon} \rightarrow 0$ a.e. as $n \rightarrow \infty$, by the Lebesgue Dominated Convergence Theorem, $\int_I \varphi\left(\frac{k(f - f_n)^*}{v_{\epsilon}}\right)v_{\epsilon} \rightarrow 0$ as $n \rightarrow \infty$. This implies that
	\[
	P_{\varphi,w}(k(f - f_n)) \leq \int_{I}\varphi\left(\frac{k(f - f_n)^*}{v_{\epsilon}}\right)v_{\epsilon}\rightarrow 0,
	\]
	as $n \rightarrow \infty$. Thus by Proposition \ref{modnormeq}, $\|f - f_n\|_{\mathcal{M}_{\varphi,w}} \rightarrow 0$, and so we have $\{ f \in L_0 : P_{\varphi,w}(k f) < \infty \,\,\, \text{for all} \,\,\, k >0\} \subset (\mathcal{M}_{\varphi,w})_a = (\mathcal{M}_{\varphi,w})_b$. 
	
	For the sequence spaces, we always have $(\mathfrak{m}_{\varphi,w})_a =(\mathfrak{m}_{\varphi,w})_b$ \cite[Theorem 2.5.4]{BS}. By replacing $P_{\varphi,w}$ with $\mathfrak{p}_{\varphi,w}$ and $Q_{\varphi,w}$ with $\mathfrak{q}_{\varphi,w}$, we can prove the second equality by the similar argument.   
\end{proof}

The following observation will be useful.

\begin{Corollary}\label{cor:normone}
	If $f \in (\mathcal{M}_{\varphi,w})_a$ such that $\|f\|_{\mathcal{M}_{\varphi,w}} = 1$, then $P_{\varphi,w}(f) = Q_{\varphi,w}(f) = 1$.  The analogous statement holds for the sequence $x \in (\mathfrak{m}_{\varphi,w})_a$ with $\|x\|_{\mathfrak{m}_{\varphi,w}} = 1$.
\end{Corollary}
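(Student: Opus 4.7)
The plan is to sandwich both $P_{\varphi,w}(f)$ and $Q_{\varphi,w}(f)$ between $1$ from above and below. The upper bound $P_{\varphi,w}(f) \leq Q_{\varphi,w}(f) \leq 1$ is immediate from Proposition \ref{lem:basiclux} applied with $\|f\|_{\mathcal{M}_{\varphi,w}} = 1$. For the lower bound, I would use the fact that the two Luxemburg expressions for the norm coincide: since $\|f\|_{\mathcal{M}_{\varphi,w}} = \inf\{\epsilon > 0 : P_{\varphi,w}(f/\epsilon) \leq 1\} = \inf\{\epsilon > 0 : Q_{\varphi,w}(f/\epsilon) \leq 1\} = 1$, the choice $\epsilon = n/(n+1) < 1$ yields, for every $n \in \mathbb{N}$,
\[
P_{\varphi,w}\!\left(\frac{n+1}{n} f\right) > 1 \quad \text{and} \quad Q_{\varphi,w}\!\left(\frac{n+1}{n} f\right) > 1.
\]

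To pass these strict inequalities to the limit, I would exploit convexity of the modulars together with the order-continuity hypothesis. The identity
\[
\frac{n+1}{n} f = \left(1 - \frac{1}{2n}\right) f + \frac{1}{2n}\, (3f)
\]
exhibits $\frac{n+1}{n} f$ as a convex combination of $f$ and $3f$, so convexity of $P_{\varphi,w}$ gives
\[
P_{\varphi,w}\!\left(\frac{n+1}{n} f\right) \leq \left(1 - \frac{1}{2n}\right) P_{\varphi,w}(f) + \frac{1}{2n}\, P_{\varphi,w}(3f),
\]
and the analogous estimate for $Q_{\varphi,w}$. Since $f \in (\mathcal{M}_{\varphi,w})_a$, Theorem \ref{Order} yields $P_{\varphi,w}(3f), Q_{\varphi,w}(3f) < \infty$, so the residual summand tends to $0$ as $n \to \infty$. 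Taking $\limsup_n$ then gives $P_{\varphi,w}(f) \geq 1$ and $Q_{\varphi,w}(f) \geq 1$, and combined with the upper bound this forces $P_{\varphi,w}(f) = Q_{\varphi,w}(f) = 1$. The sequence case is handled identically, with $\mathfrak{p}_{\varphi,w}$ and $\mathfrak{q}_{\varphi,w}$ in place of $P_{\varphi,w}$ and $Q_{\varphi,w}$.

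The main technical point I expect to require care is the convexity step for $P_{\varphi,w}$. In contrast to $Q_{\varphi,w}$, which is a genuine convex integral modular, $P_{\varphi,w}$ is defined as an infimum over $v \prec w$, so its convexity as a functional on $L_0$ is not transparent from the definition. It is, however, built into $P_{\varphi,w}$'s status as a convex semimodular that defines a norm (cf.\ \cite{KR2}); if a direct verification is needed, one can combine the rearrangement submajorization $(\lambda f + (1-\lambda) g)^* \prec \lambda f^* + (1-\lambda) g^*$, Hardy's Lemma \ref{lem:hardy}, and convexity of $\varphi$ to establish it.
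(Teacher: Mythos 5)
Your proof is correct, but it takes a genuinely different route from the paper's. The paper's proof observes that since $f \in (\mathcal{M}_{\varphi,w})_a$, Theorem~\ref{Order} guarantees $P_{\varphi,w}(kf) < \infty$ for every $k > 0$, so the map $k \mapsto P_{\varphi,w}(kf)$ is a finite convex (hence continuous) nondecreasing function on $[0,\infty)$ tending to $\infty$; by the intermediate value theorem it attains the value $1$ at some $k_0$, and the Luxemburg-norm definition then forces $k_0 = 1$, so $P_{\varphi,w}(f) = 1$ (and similarly for $Q_{\varphi,w}$). You instead run a two-sided sandwich: the upper bound from Proposition~\ref{lem:basiclux}, and a lower bound obtained by noting $P_{\varphi,w}\bigl(\tfrac{n+1}{n}f\bigr) > 1$, writing $\tfrac{n+1}{n}f = (1 - \tfrac{1}{2n})f + \tfrac{1}{2n}(3f)$, invoking convexity, and letting $n \to \infty$ using the finiteness of both $P_{\varphi,w}(f)$ (from the upper bound) and $P_{\varphi,w}(3f)$ (from Theorem~\ref{Order}). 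Both arguments ultimately rest on the same two ingredients — Theorem~\ref{Order} for finiteness at all scalings and convexity of the modular — but your version avoids the slightly delicate IVT/monotonicity reasoning (in particular, the paper's line ``there exists $k_0$ such that $P_{\varphi,w}(k_0 f) = 1$, so $k_0 = 1$'' tacitly requires choosing the right $k_0$ when the modular is flat at the value $1$, which your sandwich sidesteps entirely). Your flagged concern about convexity of $P_{\varphi,w}$ is well placed but not a gap: $P_{\varphi,w}$ is established as a convex modular in \cite{KR2}, which is what makes the Luxemburg norm a norm in the first place, and the paper itself uses this convexity repeatedly (e.g.\ in Propositions~\ref{lem:basiclux} and~\ref{modnormeq}).
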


\begin{proof}
	In view of Theorem \ref{Order}, for $f\in (\mathcal{M}_{\varphi,w})_a$,  the mapping $k \mapsto P_{\varphi,w}(kf)$ is continuous on $[0, \infty)$, and {\bf $\lim_{k\to\infty} P_{\varphi,w}(kf) = \infty$}. Hence, there exists $k_0$ such that $P_{\varphi,w}(k_0f) = 1$. By the definition of the Luxemburg norm of $\mathcal{M}_{\varphi,w}$, we must have $k_0 = \|f\|_{\mathcal{M}_{\varphi,w}} = 1$. By considering the continuous mapping $k \mapsto Q_{\varphi,w}(kf)$ on $[0, \infty)$ with $\lim_{k \to \infty}Q_{\varphi,w}(kf) = \infty$, we can also show that $Q_{\varphi,w}(f) = 1$. 
\end{proof}

Before presenting the main results of this section, we need the following auxiliary results.
	   
\begin{Lemma}\label{cor:notdelta2}
	If $\varphi$ does not satisfy the appropriate $\Delta_2$-condition, there exists $f \in \mathcal{M}_{\varphi,w}$ such that $Q_{\varphi,w}(f) \leq 1$ but $ Q_{\varphi,w}(rf) = \infty$ for $r > 1$. The similar result holds for the space $\mathfrak{m}_{\varphi,w}$.
\end{Lemma}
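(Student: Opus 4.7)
My plan is to reduce the construction to the decreasing case via Proposition \ref{prop:declevel}. Concretely, if $g \ge 0$ is a decreasing function on $I$, then $f := gw$ is again decreasing (as the product of two decreasing non-negative functions), hence $f = f^*$. Proposition \ref{prop:declevel} then gives $(f^*)^0 = (gw)^0 = gw$ a.e., so $(f^*)^0/w = g$ a.e., and consequently
\[
Q_{\varphi,w}(f) = \int_I \varphi(g)\,w, \qquad Q_{\varphi,w}(rf) = \int_I \varphi(rg)\,w
\]
for every $r > 0$ (the second identity uses that $rg$ is also decreasing). So it suffices to build a decreasing $g \ge 0$ with $\int_I \varphi(g)w \le 1$ and $\int_I \varphi(rg)w = \infty$ for every $r>1$.

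For the construction of $g$, I apply Lemma \ref{le:C} to the negation of the appropriate $\Delta_2$-condition: for each $n \in \mathbb{N}$ I select $u_n>0$ with $\varphi\bigl((1+\tfrac{1}{n})u_n\bigr) \ge 2^n\varphi(u_n)$, choosing $u_n\uparrow\infty$ when $\Delta_2^\infty$ fails and $u_n\downarrow 0$ (with $u_n < u_{n-1}$) when only $\Delta_2^0$ fails; in the latter sub-case the freedom in Lemma \ref{le:C} to shrink the threshold $u_0$ lets me enforce strict monotonicity. Set $\alpha_n := 2^{-n}/\varphi(u_n)$, which is positive since $\varphi(u_n) > 0$. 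Using the continuity of $W$ and the hypothesis $W(\infty)=\infty$, I partition part of $I$ into disjoint intervals $J_n$ with $\int_{J_n} w = \alpha_n$: in the $u_n \uparrow \infty$ case I take nested intervals $J_n = [s_{n+1}, s_n)$ with $s_n \downarrow 0$ chosen so that $W(s_n)=\sum_{k \ge n} \alpha_k$ (the tail sum is finite since $\varphi(u_n)\to\infty$ forces $\alpha_n$ to decay); in the $u_n \downarrow 0$ case I take consecutive intervals $J_n = [t_{n-1}, t_n)$ extending rightward from $t_0 = 0$ with $W(t_n) = W(t_{n-1})+\alpha_n$. Either choice makes $g := \sum_n u_n \chi_{J_n}$ (extended by zero) a decreasing function on $I$.

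The verification is then immediate. On one hand,
\[
\int_I \varphi(g)\,w = \sum_n \varphi(u_n)\alpha_n = \sum_n 2^{-n} = 1,
\]
giving $Q_{\varphi,w}(f) \le 1$ and in particular $f \in \mathcal{M}_{\varphi,w}$. On the other hand, given $r>1$, pick $N$ with $1 + 1/n \le r$ for all $n \ge N$; then
\[
\int_I \varphi(rg)\,w \ge \sum_{n\ge N} \varphi\bigl((1+\tfrac{1}{n})u_n\bigr)\alpha_n \ge \sum_{n\ge N} 2^n\varphi(u_n)\alpha_n = \sum_{n\ge N} 1 = \infty,
\]
so $Q_{\varphi,w}(rf) = \infty$. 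The sequence case for $\mathfrak{m}_{\varphi,w}$ proceeds identically after replacing intervals by consecutive index-blocks realizing the prescribed $w$-masses. The main obstacle I anticipate is purely bookkeeping: assembling the step function in a globally monotone fashion across the two sub-regimes ($u_n \to \infty$ versus $u_n \to 0$) and making sure the $w$-measures $\alpha_n$ fit inside the range of $W$ on $I$; the modular identities themselves are transparent thanks to Proposition \ref{prop:declevel}.
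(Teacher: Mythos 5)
Your proof is correct and takes essentially the same route as the paper's: extract $u_n$ from the failure of the appropriate $\Delta_2$-condition via Lemma \ref{le:C}, allocate intervals of $w$-mass $2^{-n}/\varphi(u_n)$, form the decreasing step function times $w$, and compute $Q_{\varphi,w}$ through Proposition \ref{prop:declevel}. The only differences are presentational: you build a single decreasing function directly (the paper instead constructs a whole family $(f_k)$ of disjointly supported translates because the extra structure is needed for Theorem \ref{notD}, and then passes to rearrangements), and you spell out the two sub-regimes $u_n\uparrow\infty$ and $u_n\downarrow 0$ together with the bookkeeping that the $w$-masses fit inside $W(I)$, points the paper compresses into ``similar.''
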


\begin{proof}
		Since the proof for the infinite Lebesgue measure space is similar to the finite Lebesgue measure space case, we only consider when $\gamma < \infty$. In this case, we have $\int_0^{\gamma} w = W(\gamma) < \infty$. Assume that $\varphi$ does not satisfy the $\Delta_2^{\infty}$-condition. Then by Lemma \ref{le:C}, there exists an increasing sequence of real numbers $(u_n)_{n=1}^{\infty}$ with $u_n \uparrow \infty$ such that 
	\begin{equation}\label{eq:eqvD2}
		\varphi\left(\left(1 + \frac{1}{n}\right)u_n\right) > 2^n \varphi(u_n) \ \ \text{for every} \ \  n \in \mathbb{N}.
	\end{equation}
	
	Choose $u_1$ that satisfies $\frac{1}{\varphi(u_1)} < W(\gamma)$. Since $\sum_{n=1}^{\infty} \frac{1}{2^n \varphi(u_n)} \leq \frac{1}{\varphi(u_1)} < W(\gamma)$, there exists $t_0 \in (0, \gamma)$ such that $W(t_0) = \sum_{n=1}^{\infty} \frac{1}{2^n \varphi(u_n)}$. Hence we can choose $t_n \downarrow 0$ such that $\frac{1}{2^n\varphi(u_n)} = \int_{t_n}^{t_{n-1}} w$ for every $n \in \mathbb{N}$. Let $(t_{n_i})_{i=1}^{\infty}$ be a subsequence of $(t_n)_{n=1}^{\infty}$ such that $\sum_{i=0}^{\infty} t_{n_i} < \gamma$, where $t_{n_0} = t_0$.  For $k \in \mathbb{N}$, define
	\[
	f_k = \sum_{n = n_k+1}^{\infty} u_n w \chi_{[t_n + \sum_{i=0}^{k-1}t_{n_i}, t_{n-1} + \sum_{i=0}^{k-1}t_{n_i})}.
	\]
	Notice that $\supp f_k \subset [\sum_{i=0}^{k-1}t_{n_i}, \sum_{i=0}^{k}t_{n_i})$ and that $\supp f_k$'s are pairwise disjoint. 
	
	Due to our choice of $u_n$ and $w$ being decreasing, we see that $f_k^* = \sum_{n = n_k+1}^{\infty} u_n w \chi_{[t_n, t_{n-1})}$ and  $\frac{f_k^*}{w} = \sum_{n = n_k+1}^{\infty} u_n \chi_{[t_n, t_{n-1})}$ are decreasing functions, so $f_k^* = (f_k^*)^0$ by Proposition \ref{prop:declevel}. Hence we obtain 
	\[
	Q_{\varphi,w}(f_k) = \int_I \varphi \left(\frac{(f_k^*)^0}{w}\right)w = \sum_{n = n_k + 1}^{\infty} \varphi(u_n) \int_{t_n}^{t_{n-1}} w = \frac{1}{2^{n_k}}\leq \frac{1}{2^k}.
	\] 
	
	Now, let $s > 1$. Then there exists $j_0$ such that for all $n > j_0$, $1 + \frac{1}{n} < s$. Let  $k\in\mathbb{N}$ and $N = \max\{j_0, n_k +1\}$. For every $n \geq N$, by (\ref{eq:eqvD2}), we have
	
	\[
	Q_{\varphi,w}(sf_k) = \sum_{n = n_k + 1}^{\infty} \varphi(s u_n) \int_{t_n}^{t_{n-1}} w \geq \sum_{n = N}^{\infty} \varphi\left(\left(1 + \frac{1}{n}\right) u_n\right) \int_{t_n}^{t_{n-1}} w > \sum_{n = N}^{\infty} \frac{2^n\varphi(u_n)}{2^n\varphi(u_n)}= \infty.    
	\]
	Hence $\|f_k\|_{\mathcal{M}_{\varphi,w}} = 1$.
\end{proof}

\begin{Lemma}\label{cor:notdelta2-P}
	If $\varphi$ does not satisfy the appropriate $\Delta_2$-condition, there exists $f \in \mathcal{M}_{\varphi,w}$ such that $P_{\varphi,w}(f) \leq 1$ but $ P_{\varphi,w}(rf) = \infty$ for $r > 1$. The similar result holds for the space $\mathfrak{m}_{\varphi,w}$.
\end{Lemma}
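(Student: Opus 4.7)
The plan is to reuse the sequence $(f_k)$ constructed in the proof of Lemma \ref{cor:notdelta2}. Since the general inequality $P_{\varphi,w} \le Q_{\varphi,w}$ holds (noted in Section 3), the upper bound $P_{\varphi,w}(f_k) \le Q_{\varphi,w}(f_k) \le 2^{-k} \le 1$ is immediate. The substantive task will be to upgrade the divergence statement to the $P$-modular: to show $P_{\varphi,w}(rf_k) = \infty$ for every $r > 1$, i.e., that no decreasing $v \prec w$ can render $\int_I \varphi(rf_k^*/v)\,v$ finite. This is strictly stronger than the analogous $Q$-statement because $v$ is no longer forced to be $w$, and the argument will need to exploit the majorization $v \prec w$ globally.

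Fix $r > 1$ and a decreasing $v \prec w$. My approach is to estimate the integral piecewise on $[t_n, t_{n-1})$ where $f_k^* = u_n w$. Applying Jensen's inequality to the convex function $\varphi$ with respect to the finite measure $v(t)\,dt$ on $[t_n, t_{n-1})$ would give
\[
\int_{t_n}^{t_{n-1}} \varphi\!\left(\frac{r u_n w}{v}\right) v\,dt \;\ge\; \Delta V_n\,\varphi\!\left(\frac{r u_n \Delta W_n}{\Delta V_n}\right),
\]
where $\Delta V_n := V(t_{n-1})-V(t_n)$ and $\Delta W_n := W(t_{n-1})-W(t_n) = 1/(2^n\varphi(u_n))$. (A vanishing $\Delta V_n$ forces the left side to be $+\infty$ on the support of $f_k^*$ under the usual convention, so I may assume $\Delta V_n > 0$.) Since $\varphi(x)/x$ is increasing for convex $\varphi$ with $\varphi(0)=0$, the map $s \mapsto s\varphi(a/s)$ is decreasing on $(0,\infty)$, so whenever $\Delta V_n \le \Delta W_n$ the right side is at least $\Delta W_n\,\varphi(r u_n)$.

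The crucial step will be ruling out that $\Delta V_n > \Delta W_n$ holds eventually. If instead $\Delta V_n > \Delta W_n$ for all $n \ge N$, summing would give $V(t_{N-1}) > W(t_{N-1})$, contradicting $v \prec w$; hence $\Delta V_n \le \Delta W_n$ for infinitely many $n$. For such $n$ large enough to satisfy $r \ge 1 + 1/n$, the inequality (\ref{eq:eqvD2}) from the proof of Lemma \ref{cor:notdelta2} yields $\varphi(r u_n) \ge \varphi((1+1/n)u_n) > 2^n \varphi(u_n)$, so $\Delta W_n\,\varphi(r u_n) > 1$. Therefore $\int_I \varphi(rf_k^*/v)\,v$ contains infinitely many summands exceeding $1$ and diverges; taking the infimum over $v$ completes the function-space case. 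The sequence-space assertion is handled in parallel, integrals being replaced by sums. The main obstacle, as I see it, is precisely the majorization step: pure Jensen on each dyadic piece can be defeated by an adversarial choice of $v$ concentrating its mass away from the support of $f_k^*$, and only the global constraint $V \le W$ rules that out.
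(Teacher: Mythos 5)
Your approach is correct, and it takes a genuinely different (and, I think, tighter) route than the paper. You keep the single sequence $(f_k)$ already built relative to $w$ in Lemma~\ref{cor:notdelta2} — so $f_k^* = \sum_{n>n_k} u_n w\chi_{[t_n,t_{n-1})}$ with $\Delta W_n = \int_{t_n}^{t_{n-1}} w = 2^{-n}/\varphi(u_n)$ — and then show that $\int_I \varphi(rf_k^*/v)\,v = \infty$ for \emph{every} admissible $v$, via Jensen on each block $[t_n,t_{n-1})$ combined with the global constraint $V\le W$. That last constraint is exactly what forces $\Delta V_n \le \Delta W_n$ infinitely often, and the monotonicity of $s\mapsto s\varphi(a/s)$ then gives $\int_{t_n}^{t_{n-1}}\varphi(ru_n w/v)\,v \ge \Delta W_n\,\varphi(ru_n) > 1$ for infinitely many $n$ once $r\ge 1+1/n$. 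This handles the infimum over $v$ directly. The paper instead re-runs the block construction with an arbitrary fixed decreasing $v\prec w$ in place of $w$: it picks $t_n$ so that $\int_{t_n}^{t_{n-1}} v = 2^{-n}/\varphi(u_n)$, sets $f_k = \sum u_n v\chi_{[\cdots)}$, and shows divergence of $\int\varphi(sf_k^*/v)\,v$ for that one $v$, concluding ``since this is true for every decreasing $v\prec w$.'' Because the paper's $f_k$ changes with $v$, that concluding step does not directly give $P_{\varphi,w}(sf_k)=\infty$ for a single fixed $f_k$, which is what the lemma requires; your Jensen-plus-majorization argument is precisely the missing bridge from divergence against one $v$ to divergence against all of them. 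One small point of care: when $\Delta V_n = 0$ you invoke ``the usual convention'' that the integrand is $+\infty$ where $v=0<f_k^*$; this is indeed the convention the modular $P_{\varphi,w}$ must carry (otherwise $v\equiv 0$ would make $P_{\varphi,w}\equiv 0$), but it is worth stating explicitly since the paper never writes it down.
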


\begin{proof}
		Since the proof for the infinite Lebesgue measure space is similar to the finite Lebesgue measure space case, we only consider when $\gamma < \infty$. Fix a decreasing function $v \prec w$. In this case, we have $\int_0^{\gamma} v = V(\gamma) \leq W(\gamma) < \infty$. Assume that $\varphi$ does not satisfy the $\Delta_2^{\infty}$-condition. Then by Lemma \ref{le:C}, there exists an increasing sequence of real numbers $(u_n)_{n=1}^{\infty}$ with $u_n \uparrow \infty$ such that 
	\begin{equation}\label{eq:eqvD2-2}
		\varphi\left(\left(1 + \frac{1}{n}\right)u_n\right) > 2^n \varphi(u_n) \ \ \text{for every} \ \  n \in \mathbb{N}.
	\end{equation}
	
	Choose $u_1$ that satisfies $\frac{1}{\varphi(u_1)} < V(\gamma)$. Since $\sum_{n=1}^{\infty} \frac{1}{2^n \varphi(u_n)} \leq \frac{1}{\varphi(u_1)} < V(\gamma)$, there exists $t_0 \in (0, \gamma)$ such that $V(t_0) = \sum_{n=1}^{\infty} \frac{1}{2^n \varphi(u_n)}$. Hence we can choose $t_n \downarrow 0$ such that $\frac{1}{2^n\varphi(u_n)} = \int_{t_n}^{t_{n-1}} v$ for every $n \in \mathbb{N}$. Let $(t_{n_i})_{i=1}^{\infty}$ be a subsequence of $(t_n)_{n=1}^{\infty}$ such that $\sum_{i=0}^{\infty} t_{n_i} < \gamma$, where $t_{n_0} = t_0$.  For $k \in \mathbb{N}$, define
	\[
	f_k = \sum_{n = n_k+1}^{\infty} u_n v \chi_{[t_n + \sum_{i=0}^{k-1}t_{n_i}, t_{n-1} + \sum_{i=0}^{k-1}t_{n_i})}.
	\]
	Notice that $\supp f_k \subset [\sum_{i=0}^{k-1}t_{n_i}, \sum_{i=0}^{k}t_{n_i})$ and that $\supp f_k$'s are pairwise disjoint. 
	
	Due to our choice of $u_n$ and $v$ being decreasing, we see that $f_k^* = \sum_{n = n_k+1}^{\infty} u_n v \chi_{[t_n, t_{n-1})}$ and $\frac{f_k^*}{v} = \sum_{n = n_k+1}^{\infty} u_n \chi_{[t_n, t_{n-1})}$. Hence we obtain 
	\[
	P_{\varphi,w}(f_k) \leq \int_I \varphi \left(\frac{f_k^*}{v}\right)v = \sum_{n = n_k + 1}^{\infty} \varphi(u_n) \int_{t_n}^{t_{n-1}} v = \frac{1}{2^{n_k}}\leq \frac{1}{2^k}.
	\] 
	
	Now, let $s > 1$. Then there exists $j_0$ such that for all $n > j_0$, $1 + \frac{1}{n} < s$. Let  $k\in\mathbb{N}$ and $N = \max\{j_0, n_k +1\}$. For $n \geq N$, by (\ref{eq:eqvD2-2}), we have
	
	\[
	\sum_{n = n_k + 1}^{\infty} \varphi(s u_n) \int_{t_n}^{t_{n-1}} v \geq \sum_{n = N}^{\infty} \varphi\left(\left(1 + \frac{1}{n}\right) u_n\right) \int_{t_n}^{t_{n-1}} v > \sum_{n = N}^{\infty} \frac{2^n\varphi(u_n)}{2^n\varphi(u_n)}= \infty.    
	\]
	Since this is true for every decreasing $v \prec w$, $P_{\varphi,w}(sf_k) = \infty$ for every $s > 1$. 
	Therefore, $\|f_k\|_{\mathcal{M}_{\varphi,w}} = 1$.
\end{proof}

\begin{Theorem}\label{notD}
	If $\varphi$ does not  satisfy the appropriate $\Delta_2$-condition, there exists a sequence of non-negative functions $f_k \in \mathcal{M}_{\varphi,w}$ with pairwise disjoint supports such that $\|f_k\|_{\mathcal{M}_{\varphi,w}} = 1$  and $\|\sum_{k=1}^{\infty} f_k\|_{\mathcal{M}_{\varphi,w}} =1$ for every $k\in \mathbb{N}$. The similar result holds for the space $\mathfrak{m}_{\varphi,w}$.  
\end{Theorem}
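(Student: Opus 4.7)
The plan is to take the sequence $(f_k)$ produced by Lemma \ref{cor:notdelta2-P}, set $F = \sum_{k=1}^{\infty} f_k$, and show $\|F\|_{\mathcal{M}_{\varphi,w}} = 1$. That lemma furnishes disjointly supported $f_k \ge 0$ with $\|f_k\|_{\mathcal{M}_{\varphi,w}} = 1$ together with explicit data: a decreasing $v$ with $v \prec w$, sequences $u_n \uparrow \infty$ and $t_n \downarrow 0$, a subsequence $(n_k)$, and intervals $I_{k,n}$ ($n \ge n_k+1$) that are right-translates of $[t_n,t_{n-1})$ by $s_{k-1} := \sum_{i=0}^{k-1} t_{n_i}$, with $f_k = \sum_{n \ge n_k+1} u_n v\, \chi_{I_{k,n}}$. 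The lower bound $\|F\|_{\mathcal{M}_{\varphi,w}} \ge 1$ is immediate from $0 \le f_1 \le F$ and the lattice property of the norm, so the real work is to prove $P_{\varphi,w}(F) \le 1$, which then forces $\|F\|_{\mathcal{M}_{\varphi,w}} \le 1$.

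Taking $\tilde v = v$ in the defining infimum of $P_{\varphi,w}$ gives $P_{\varphi,w}(F) \le \int_I \varphi(F^*/v)\, v$. The easy ``pointwise'' computation is on $F$ rather than $F^*$: since $F/v = \sum_{k} \sum_{n \ge n_k+1} u_n \chi_{I_{k,n}}$ on $\supp F$, and the translation $y = x+s_{k-1}$ combined with the monotonicity of $v$ yields $\int_{I_{k,n}} v \le \int_{t_n}^{t_{n-1}} v = 1/(2^n\varphi(u_n))$, one gets
\[
\int_I \varphi(F/v)\, v \;=\; \sum_k \sum_{n \ge n_k+1} \varphi(u_n) \int_{I_{k,n}} v \;\le\; \sum_k \frac{1}{2^{n_k}} \;\le\; \sum_k \frac{1}{2^k} \;=\; 1.
\]

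The main obstacle is the rearrangement inequality $\int_I \varphi(F^*/v)\, v \le \int_I \varphi(F/v)\, v$, needed to convert this pointwise bound into the desired bound on the modular. My plan is to use the layer-cake identity $v\varphi(u/v) = \int_0^u \varphi'(s/v)\, ds$ (valid because $\varphi(0)=0$) together with Tonelli to obtain
\[
\int_I v\varphi(F/v)\, dx \;=\; \int_0^\infty \Bigl(\int_I \chi_{\{F > s\}}(x)\, \varphi'(s/v(x))\, dx\Bigr) ds.
\]
For each fixed $s > 0$, the map $x \mapsto \varphi'(s/v(x))$ is non-decreasing on $I$ (as $v$ is decreasing and $\varphi'$ is non-decreasing), while $\{F^* > s\} = [0, d_F(s))$ is the leftmost subset of $I$ of Lebesgue measure $d_F(s)$; a short exchange argument (in the spirit of Hardy's Lemma \ref{lem:hardy}) then gives $\int_I \chi_{\{F > s\}} \varphi'(s/v)\, dx \ge \int_I \chi_{\{F^*>s\}} \varphi'(s/v)\, dx$. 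Integrating in $s$ delivers the required inequality. Combining everything yields $P_{\varphi,w}(F) \le 1$, hence $\|F\|_{\mathcal{M}_{\varphi,w}} = 1$. The sequence case is handled by the analog of Lemma \ref{cor:notdelta2-P} for $\mathfrak{m}_{\varphi,w}$, replacing integrals by sums and $P_{\varphi,w}$ by $\mathfrak{p}_{\varphi,w}$.
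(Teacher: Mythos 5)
Your proof is correct, and it supplies the upper-bound step that the paper's own proof omits. As written, the paper's argument establishes only $\|\sum_k f_k\|_{\mathcal{M}_{\varphi,w}} \ge 1$ (once from $\|f_1\|\le\|\sum_k f_k\|$, and again from $Q_{\varphi,w}(r\sum_k f_k)=\infty$ for $r>1$) and then declares equality; the reverse inequality is the nontrivial one, since the paper itself stresses that $P_{\varphi,w}$ is orthogonally \emph{super}additive, so $P_{\varphi,w}\bigl(\sum_k f_k\bigr)$ cannot be controlled by summing the estimates $P_{\varphi,w}(f_k)\le 2^{-n_k}$. You correctly single this out and prove $P_{\varphi,w}(F)\le 1$ for $F=\sum_k f_k$: compare against the single competitor $v$ in the infimum, compute $\int_I\varphi(F/v)\,v\le\sum_k 2^{-n_k}\le 1$ from the block structure of the $f_k$'s and the monotonicity of $v$, and then transfer the estimate to $F^*$ by the layer-cake/Tonelli identity together with the exchange argument that, for each level $s$, the superlevel set $\{F^*>s\}=[0,d_F(s))$ is the leftmost set of its measure while $x\mapsto\varphi'(s/v(x))$ is non-decreasing (because $v$ decreases and $\varphi'$ is non-decreasing). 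That rearrangement inequality, $\int_I\varphi(F^*/v)v\le\int_I\varphi(F/v)v$ for decreasing $v$, is exactly the lemma needed and your derivation is sound; it uses only $\varphi(0)=0$ and the absolute continuity of the convex function $\varphi$ for the layer-cake step, and since $P_{\varphi,w}(F)\le 1$ gives $\|F\|_{\mathcal{M}_{\varphi,w}}\le 1$ directly from the Luxemburg definition, the argument closes. The sequence case follows verbatim with sums in place of integrals.
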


\begin{proof}	
	Let $(f_k)_{k = 1}^{\infty}$ be the sequence of pairwise disjoint functions constructed in Lemma \ref{cor:notdelta2}. In order to show $\|\sum_{k=1}^{\infty}f_k\|_{\mathcal{M}_{\varphi,w}} = 1$, observe that $1= \|f_k\|_{\mathcal{M}_{\varphi,w}} \leq \|\sum_{k=1}^{\infty} f_k\|_{\mathcal{M}_{\varphi,w}}$. 
	Let $r>1$. Since $Q_{\varphi,w}(r f_k) = \infty$ for all $k$, we have
	\[
	Q_{\varphi,w}\left(r \sum_{k=1}^{\infty}f_k\right) \geq Q_{\varphi,w}(r f_k) = \infty.
	\]
	Thus, $\|\sum_{k=1}^{\infty}f_k\|_{\mathcal{M}_{\varphi,w}} = 1$. The same statement can be proven by using the sequence of $(f_k)_{k = 1}^{\infty}$ constructed in Lemma \ref{cor:notdelta2-P} if one wishes to prove it in terms of the modular $P_{\varphi,w}$.
	
	Since the analogous result for the space $\mathfrak{m}_{\varphi,w}$ can be shown with the similar method, we omit the proof.
\end{proof}

Now, we are ready to prove the main result of this section. Even though the implication (ii) $\implies$ (i) of the next theorem was already shown in \cite[Theorem 6.7]{FLM}, we prove this with a different method. More importantly, we can see that the reverse implication also holds. 

\begin{Theorem}\label{Sep}
	The following statements are equivalent:
	\begin{enumerate}[\rm(i)]
		\item $\mathcal{M}_{\varphi,w} = \{ f \in L_0 : Q_{\varphi,w}(k f) < \infty \,\,\, \text{for all} \,\,\, k >0\} = \{ f \in L_0 : P_{\varphi,w}(k f) < \infty \,\,\, \text{for all} \,\,\, k >0\}$.
		\item $\varphi$ satisfies the appropriate $\Delta_2$-condition. 
	\end{enumerate}
\end{Theorem}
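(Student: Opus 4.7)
By Theorem \ref{Order}, both sets on the right-hand side of (i) coincide with $(\mathcal{M}_{\varphi,w})_a$, so (i) is equivalent to $\mathcal{M}_{\varphi,w} = (\mathcal{M}_{\varphi,w})_a$. The plan is to establish the equivalence with (ii) by a direct $\Delta_2$-iteration in the forward direction and by contraposition via Lemmas \ref{cor:notdelta2} and \ref{cor:notdelta2-P} in the reverse direction.

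For (ii) $\implies$ (i), I would take $f \in \mathcal{M}_{\varphi,w}$, fix $k_0 > 0$ with $Q_{\varphi,w}(k_0 f) < \infty$, and upgrade this to $Q_{\varphi,w}(kf) < \infty$ for every $k > 0$. The case $k \leq k_0$ is immediate by monotonicity of $\varphi$. For $k > k_0$, Lemma \ref{le:C} yields $l > 1$ and $K > 1$ with $\varphi(lu) \leq K\varphi(u)$ on the appropriate range; choosing $n$ with $l^n k_0 \geq k$ and iterating gives $\varphi(l^n u) \leq K^n \varphi(u)$, producing the pointwise bound $\varphi(k (f^*)^0/w) \leq K^n \varphi(k_0 (f^*)^0/w)$ wherever the iteration is valid. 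In the case $\gamma = \infty$ (full $\Delta_2$) the bound is valid on all of $I$, so integration gives $Q_{\varphi,w}(kf) \leq K^n Q_{\varphi,w}(k_0 f) < \infty$. In the case $\gamma < \infty$ (using $\Delta_2^\infty$), the integral is split at the set $A = \{k_0 (f^*)^0/w \geq u_0\}$: the estimate applies on $A$, while on $A^c$ one has $\varphi(k(f^*)^0/w) \leq \varphi(l^n u_0)$, which is integrable against $w$ since $W(\gamma) < \infty$. In the sequence case (using $\Delta_2^0$), since $(x^*)^0 = Dw$ for some decreasing $D$ by the remark following Proposition \ref{prop:Halmono}, and since $\mathfrak{q}_{\varphi,w}(k_0 x) < \infty$ together with $W(\infty) = \infty$ forces $D(i) \to 0$, one has $k_0 D(i) \leq u_0/l^{n-1}$ for $i$ large, which is exactly the shrunken range on which the iterated $\Delta_2^0$-estimate applies to the tail; the finitely many initial terms are individually finite by local integrability. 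The $P_{\varphi,w}$-equality is then automatic from $P_{\varphi,w} \leq Q_{\varphi,w}$.

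For (i) $\implies$ (ii), I would argue by contraposition: if $\varphi$ fails the appropriate $\Delta_2$-condition, Lemma \ref{cor:notdelta2} constructs $f \in \mathcal{M}_{\varphi,w}$ with $Q_{\varphi,w}(f) \leq 1$ but $Q_{\varphi,w}(rf) = \infty$ for every $r > 1$, breaking the first equality in (i); Lemma \ref{cor:notdelta2-P} supplies the analogous witness for the second equality. The main technical obstacle is the iteration under $\Delta_2^0$, where each application of $\varphi(lu) \leq K\varphi(u)$ shrinks the range of validity by a factor of $l$; the crucial observation that keeps the argument clean is that $(x^*)^0/w$ is decreasing and must tend to zero whenever $\mathfrak{q}_{\varphi,w}(k_0 x) < \infty$, which is precisely what enables the tail/prefix split. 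The $\Delta_2^\infty$ and full $\Delta_2$ regimes are simpler since there the iterated inequality is valid on a range that does not shrink.
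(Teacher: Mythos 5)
Your proposal is correct and follows essentially the same route as the paper: contraposition via Lemmas \ref{cor:notdelta2} and \ref{cor:notdelta2-P} for (i)$\implies$(ii), and a $\Delta_2$-based split of the modular $Q_{\varphi,w}$ at the level-threshold set $\{(f^*)^0/w \geq u_0/k\}$ for (ii)$\implies$(i). The paper proves only a single doubling step $Q_{\varphi,w}(2kf) < \infty$ and leaves the remaining iteration implicit, whereas you invoke Lemma \ref{le:C}, make the iteration explicit, and work out the $\Delta_2^0$ shrinking-range bookkeeping for the sequence version (which the paper delegates to Theorem \ref{seqSep} without proof); these are sound refinements of the same argument rather than a different method.
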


\begin{proof}
	Assume to the contrary that $\varphi$ does not satisfy the appropriate $\Delta_2$-condition. Then by Lemma \ref{cor:notdelta2-P}, there exists $f \in \mathcal{M}_{\varphi,w}$ such that $P_{\varphi,w}(f) \leq 1$ and $P_{\varphi,w}(2f) = \infty$. Hence $f \notin \{ f \in L_0 : P_{\varphi,w}(k f) < \infty \,\,\, \text{for all} \,\,\, k >0\}$. Moreover, since $\{ f \in L_0 : Q_{\varphi,w}(k f) < \infty \,\,\, \text{for all} \,\,\, k >0\} \subset \{ f \in L_0 : P_{\varphi,w}(k f) < \infty \,\,\, \text{for all} \,\,\, k >0\}$ in general, we also see that $f \notin \{ f \in L_0 : Q_{\varphi,w}(k f) < \infty \,\,\, \text{for all} \,\,\, k >0\}$. This shows (i) $\Rightarrow$ (ii).
	
	To prove (ii) $\implies$ (i), we only present the proof in regards to finite measure spaces because the proof for infinite measure spaces can be modified  easily. Suppose that $\varphi$ satisfies the $\Delta_2^{\infty}$-condition. Then there exist $K > 0$ and $u_0 \geq 0$ such that $\varphi(2u) \leq K\varphi(u)$ for every $u \geq u_0$. 
	
	Let $f \in \mathcal{M}_{\varphi,w}$. Then $Q_{\varphi,w}(k f) < \infty$ for some $k > 0$. It is enough to show that $P_{\varphi,w}(2k f) < \infty$. Define a set $A = \left\{ \frac{(f^*)^0}{w} \geq \frac{u_0}{k}\right\}$. There exists $ 0<c\le\gamma$ such that $A = (0, c) \subset I$ because $\frac{(f^*)^0}{w}$ is decreasing. From the fact that $\frac{(f^*)^0}{w} < \frac{u_0}{k}$ on $(c, \gamma)$ and the $\Delta_2^{\infty}$-condition of $\varphi$, we have
	\begin{eqnarray*}
		Q_{\varphi,w}(2k f) = \int_I \varphi\left(\frac{2k (f^*)^0}{w}\right) w &=& \int_0^c \varphi\left(\frac{2k (f^*)^0}{w}\right) w + \int_c^{\gamma} \varphi\left(\frac{2k (f^*)^0}{w}\right) w\\
		&\leq& K\int_0^c \varphi\left(\frac{k (f^*)^0}{w}\right) w + \varphi(2u_0)(W(\gamma) - W(c))\\
		&\leq& K Q_{\varphi,w}(k f) + \varphi(2u_0)(W(\gamma) - W(c)) < \infty.
	\end{eqnarray*}
 	Hence, $P_{\varphi,w}(2kf) < \infty$.   
\end{proof}    

We have the analogous result for the sequence spaces.
\begin{Theorem}\label{seqSep}
	The following statements are equivalent:
	\begin{enumerate}[\rm(i)]
		\item $\mathfrak{m}_{\varphi,w}= \{ x \in \ell_0 : \mathfrak{p}_{\varphi,w}(\lambda x) < \infty \,\,\, \text{for all} \,\,\, \lambda >0\} = \{ x \in \ell_0 : \mathfrak{q}_{\varphi,w}(\lambda x) < \infty \,\,\, \text{for all} \,\,\, \lambda >0\}$;
		\item $\varphi$ satisfies $\Delta_2^0$-condition.
	\end{enumerate}
\end{Theorem}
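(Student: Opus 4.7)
The plan is to mirror the proof of Theorem~\ref{Sep}, using the sequence analogs of Lemmas~\ref{cor:notdelta2} and~\ref{cor:notdelta2-P} (which the lemma statements signal hold by the same construction with sums replacing integrals). In the sequence setting the ``appropriate'' $\Delta_2$-condition is $\Delta_2^0$, and by Lemma~\ref{le:C} its failure yields $u_n \downarrow 0$ with $\varphi((1+1/n)u_n) > 2^n \varphi(u_n)$; feeding this into the sequence analog of Lemma~\ref{cor:notdelta2-P} produces $x \in \mathfrak{m}_{\varphi,w}$ with $\mathfrak{p}_{\varphi,w}(x) \leq 1$ but $\mathfrak{p}_{\varphi,w}(sx) = \infty$ for every $s > 1$.

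For (i) $\Rightarrow$ (ii), I argue by contraposition. If $\varphi$ fails $\Delta_2^0$, the sequence $x$ just described belongs to $\mathfrak{m}_{\varphi,w}$ but not to $\{y \in \ell_0 : \mathfrak{p}_{\varphi,w}(\lambda y) < \infty \text{ for all } \lambda > 0\}$, and hence not to the (generally smaller) set with $\mathfrak{q}_{\varphi,w}$ in place of $\mathfrak{p}_{\varphi,w}$ either, since $\mathfrak{p}_{\varphi,w} \leq \mathfrak{q}_{\varphi,w}$. Thus the equalities in (i) fail.

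For (ii) $\Rightarrow$ (i), fix $x \in \mathfrak{m}_{\varphi,w}$, pick $k > 0$ with $\mathfrak{q}_{\varphi,w}(kx) < \infty$, and choose $K > 0$ and $u_0 > 0$ from $\Delta_2^0$ so that $\varphi(2u) \leq K\varphi(u)$ for $0 \leq u \leq u_0$. Writing $D(i) := (x^*)^0(i)/w(i)$, the sequence $D$ is decreasing. If $kD(i) > u_0$ for every $i$, then $\mathfrak{q}_{\varphi,w}(kx) \geq \varphi(u_0)\sum_{i=1}^\infty w(i) = \infty$ by the standing assumption $W(\infty) = \infty$, a contradiction; hence there is $n_0$ with $kD(i) \leq u_0$ for all $i \geq n_0$. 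Splitting
\[
\mathfrak{q}_{\varphi,w}(2kx) = \sum_{i < n_0} \varphi(2kD(i))w(i) + \sum_{i \geq n_0} \varphi(2kD(i))w(i)
\]
yields a finite first sum (finitely many finite terms, since $\varphi$ is real-valued) and a second sum bounded by $K\mathfrak{q}_{\varphi,w}(kx) < \infty$ via $\Delta_2^0$. Iterating this doubling gives $\mathfrak{q}_{\varphi,w}(\lambda x) < \infty$ for every $\lambda > 0$, and the corresponding $\mathfrak{p}_{\varphi,w}$-statement follows from $\mathfrak{p}_{\varphi,w} \leq \mathfrak{q}_{\varphi,w}$. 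The only real obstacle is verifying the sequence analogs of Lemmas~\ref{cor:notdelta2} and~\ref{cor:notdelta2-P}; beyond that, the new structural input compared with the function proof of Theorem~\ref{Sep} is the use of $\sum w(i) = \infty$, in place of monotonicity of a measurable superlevel set, to force $D$ eventually into the range where $\Delta_2^0$ bites.
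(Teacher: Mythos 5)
Your proof is correct and is the natural transplantation of the paper's argument for Theorem \ref{Sep} to the sequence setting, which is exactly what the paper intends since Theorem \ref{seqSep} is stated without proof as ``the analogous result.'' The (i)$\Rightarrow$(ii) direction is a verbatim analogue via the sequence version of Lemma \ref{cor:notdelta2-P} together with $\mathfrak{p}_{\varphi,w}\leq\mathfrak{q}_{\varphi,w}$, exactly as in the function case. For (ii)$\Rightarrow$(i), the mechanism is forced to change shape: in the finite-measure function case the large-argument region $(0,c)$ is controlled by $\Delta_2^\infty$ and the tail $(c,\gamma)$ is controlled by the finiteness of $W(\gamma)-W(c)$, whereas in the sequence case the roles invert: the tail $\{i\ge n_0\}$ is where $\Delta_2^0$ bites (using $W(\infty)=\sum w(i)=\infty$ to force the decreasing sequence $D(i)=(x^*)^0(i)/w(i)$ eventually below $u_0/k$), and the head $\{i<n_0\}$ is controlled simply because it is a finite sum. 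This is the correct and essentially unique adaptation. Your iteration step, and the observation that the $\mathfrak{p}_{\varphi,w}$-statement follows automatically from $\mathfrak{p}_{\varphi,w}\le\mathfrak{q}_{\varphi,w}$, are both fine. The only piece you defer is the construction behind the sequence versions of Lemmas \ref{cor:notdelta2} and \ref{cor:notdelta2-P}, but the paper likewise asserts those without proof, so invoking them as stated is consistent with the paper's own treatment.
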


Hence we have the following consequence of Theorem \ref{Sep} and Theorem \ref{seqSep}.

\begin{Corollary}\label{cor:sep}
	The function space $\mathcal{M}_{\varphi,w}$ is separable if and only if $\varphi$ satisfies the appropriate $\Delta_2$-condition. The sequence space $\mathfrak{m}_{\varphi,w}$ is separable if and only if $\varphi$ satisfies the $\Delta_2^0$-condition. 
\end{Corollary}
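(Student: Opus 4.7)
The plan is to prove this as a direct consequence of Theorem \ref{Sep}, Theorem \ref{seqSep}, and Theorem \ref{notD}, using the general fact noted in Section 2 that the order-continuous part of a Banach function lattice over a separable measure space is itself separable.

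For the reverse direction, I would argue as follows. Suppose $\varphi$ satisfies the appropriate $\Delta_2$-condition. Then by Theorem \ref{Sep}, $\mathcal{M}_{\varphi,w}$ coincides with $\{f \in L_0 : Q_{\varphi,w}(kf) < \infty \text{ for all } k > 0\}$, which by Theorem \ref{Order} is exactly $(\mathcal{M}_{\varphi,w})_a$. Since the Lebesgue measure on $I$ is separable, $(\mathcal{M}_{\varphi,w})_a$ is separable by the standard fact \cite[Theorem 1.5.5]{BS}, and therefore so is $\mathcal{M}_{\varphi,w}$. The sequence-space analogue follows the same path via Theorem \ref{seqSep}, with separability of $(\mathfrak{m}_{\varphi,w})_a$ being automatic for sequence spaces.

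For the forward direction, I would argue by contrapositive. Suppose $\varphi$ fails to satisfy the appropriate $\Delta_2$-condition. By Theorem \ref{notD}, there exists a sequence $(f_k)_{k=1}^{\infty}$ of non-negative functions in $\mathcal{M}_{\varphi,w}$ with pairwise disjoint supports such that $\|f_k\|_{\mathcal{M}_{\varphi,w}} = 1$ for each $k$ and $\bigl\|\sum_{k=1}^{\infty} f_k\bigr\|_{\mathcal{M}_{\varphi,w}} = 1$. For each $A \subset \mathbb{N}$, define $g_A = \sum_{k \in A} f_k \in \mathcal{M}_{\varphi,w}$ (this lies in the space since it is dominated by $\sum_{k=1}^\infty f_k$ and $\mathcal{M}_{\varphi,w}$ is a lattice). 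For distinct $A, B \subset \mathbb{N}$, pick $k_0 \in A \,\Delta\, B$. Because the $f_k$ have pairwise disjoint supports, $|g_A - g_B| = \sum_{k \in A\Delta B} f_k \geq f_{k_0}$ pointwise, and since $\mathcal{M}_{\varphi,w}$ is a Banach function lattice we obtain
\[
\|g_A - g_B\|_{\mathcal{M}_{\varphi,w}} \geq \|f_{k_0}\|_{\mathcal{M}_{\varphi,w}} = 1.
\]
Thus $\{g_A : A \subset \mathbb{N}\}$ is an uncountable $1$-separated subset of $\mathcal{M}_{\varphi,w}$, which is therefore non-separable. The argument transfers verbatim to $\mathfrak{m}_{\varphi,w}$ using the sequence-space version of Theorem \ref{notD}.

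There is no real obstacle beyond assembling the cited ingredients; the only point that requires a moment's care is the lattice inequality $|g_A - g_B| \geq f_{k_0}$, which relies on the disjointness of supports and non-negativity of the $f_k$. With that, the $2^{\aleph_0}$-sized $1$-separated family is immediate and contradicts separability, closing the contrapositive.
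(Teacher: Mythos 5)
Your proposal is correct, and the reverse direction matches the paper's argument exactly: $\Delta_2$ forces $\mathcal{M}_{\varphi,w}=(\mathcal{M}_{\varphi,w})_a$ via Theorems \ref{Sep} and \ref{Order}, and separability of the measure then yields separability of the space.

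For the forward direction (failure of $\Delta_2$ implies non-separability), you take a genuinely different and more explicit route. The paper leans on the abstract characterization behind \cite[Theorem 1.5.5]{BS} (separability of a Banach function lattice over a separable measure space forces order continuity, so the chain of equivalences from Theorems \ref{Sep} and \ref{Order} runs backward as well). You instead invoke Theorem \ref{notD} to obtain pairwise-disjoint, non-negative $(f_k)$ with $\|f_k\|_{\mathcal{M}_{\varphi,w}}=1$ and $\|\sum_k f_k\|_{\mathcal{M}_{\varphi,w}}=1$, and then produce the uncountable $1$-separated family $\{g_A : A\subset\mathbb{N}\}$. Your inclusion $g_A\in\mathcal{M}_{\varphi,w}$ (dominated by $\sum_k f_k$) and the inequality $\|g_A-g_B\|_{\mathcal{M}_{\varphi,w}}\geq\|f_{k_0}\|_{\mathcal{M}_{\varphi,w}}=1$ for $k_0\in A\,\Delta\, B$ are both valid by the lattice property, so the argument is complete. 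The trade-off: the paper's version is shorter and folds everything into the order-continuity equivalence, while yours is self-contained, makes the non-separability concrete, and in fact shows the stronger fact that the space contains a $1$-separated family of continuum size (essentially recovering a lattice copy of $\ell^\infty$, which is also the mechanism behind Theorem \ref{th:copy} later in the paper).
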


\begin{proof}
	From Corollary \ref{Sep}, $\varphi$ satisfies the appropriate $\Delta_2$-condition if and only if $\mathcal{M}_{\varphi,w} = \{ f \in L_0 : P_{\varphi,w}(\lambda f) < \infty \,\,\, \text{for all} \,\,\, \lambda >0\} =  \{ f \in L_0 : Q_{\varphi,w}(\lambda f) < \infty \,\,\, \text{for all} \,\,\, \lambda >0\}$. In view of Theorem \ref{Order}, $\mathcal{M}_{\varphi,w} = (\mathcal{M}_{\varphi,w})_a = (\mathcal{M}_{\varphi,w})_b$, so the space $\mathcal{M}_{\varphi,w}$ is order-continuous. Since the Lebesgue measure $m$ is separable, the space $\mathcal{M}_{\varphi,w}$ is also separable \cite[Theorem 1.5.5]{BS}.
	
	For the space $\mathfrak{m}_{\varphi,w}$, from Theorem \ref{seqSep}, $\varphi$ satisfies the $\Delta_2^0$-condition if and only if $\mathfrak{m}_{\varphi,w} = \{ x\in \ell_0 : \mathfrak{p}_{\varphi,w}(\lambda x) < \infty \,\,\, \text{for all} \,\,\, \lambda >0\} = \{ x\in \ell_0 : \mathfrak{q}_{\varphi,w}(\lambda x) < \infty \,\,\, \text{for all} \,\,\, \lambda >0\}$. By Theorem \ref{Order}, $\mathfrak{m}_{\varphi,w} = (\mathfrak{m}_{\varphi,w})_a = (\mathfrak{m}_{\varphi,w})_b$ is separable.
\end{proof}

A Banach function lattice $X$ is a {\it $KB$-space} if the space is order-continuous and has the Fatou property \cite[Definition 5.1, pg 84]{Sch}. In addition, we mention that a Banach function lattice $X$ is a $KB$-space if and only if $X$ does not have an isomorphic copy of $c_0$ \cite[Theorem 4.60]{AB2}.  

\begin{Theorem}\label{th:copy}
	The following statements are equivalent:
	\begin{enumerate}[\rm(i)]
		\item $\varphi$ satisfies the $\Delta_2$-condition when $\gamma = \infty$ or $\varphi$ satisfies the $\Delta_2^{\infty}$-condition when $\gamma < \infty$ (resp. $\varphi$ satisfies the $\Delta_2^0$-condition);
		\item $\mathcal{M}_{\varphi,w}$ (resp. $\mathfrak{m}_{\varphi,w}$) does not have an isometric copy of $l^{\infty}$;
		\item $\mathcal{M}_{\varphi,w}$ (resp. $\mathfrak{m}_{\varphi,w}$) does not have an isometric copy of $c_0$;
		\item $\mathcal{M}_{\varphi,w}$ (resp. $\mathfrak{m}_{\varphi,w}$) does not have an isomorphic copy of $c_0$;
		\item $\mathcal{M}_{\varphi,w}$ (resp. $\mathfrak{m}_{\varphi,w}$) is a $KB$-space;
		\item $\mathcal{M}_{\varphi,w}$ (resp. $\mathfrak{m}_{\varphi,w}$) is separable. 
	\end{enumerate}
\end{Theorem}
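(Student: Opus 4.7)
The plan is to close the equivalences by exhibiting a cycle of implications, using Theorem \ref{Order}, Theorem \ref{Sep}, Theorem \ref{notD}, and Corollary \ref{cor:sep} as the principal inputs, together with the two external facts stated just before the theorem: $\mathcal{M}_{\varphi,w}$ (and $\mathfrak{m}_{\varphi,w}$) have the Fatou property \cite[Theorem 4.7]{KR}, and a Banach function lattice is a $KB$-space iff it contains no isomorphic copy of $c_0$ \cite[Theorem 4.60]{AB2}. The equivalence (i) $\Leftrightarrow$ (vi) is already Corollary \ref{cor:sep}, so it remains to chain (i) $\Rightarrow$ (v) $\Rightarrow$ (iv) $\Rightarrow$ (iii) $\Rightarrow$ (ii) $\Rightarrow$ (i).

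For (i) $\Rightarrow$ (v): under the appropriate $\Delta_2$-condition, Theorem \ref{Sep} combined with Theorem \ref{Order} gives $\mathcal{M}_{\varphi,w}=(\mathcal{M}_{\varphi,w})_a$, i.e.\ order-continuity of the whole space; together with the Fatou property this is precisely the definition of a $KB$-space. The implication (v) $\Rightarrow$ (iv) is a direct invocation of \cite[Theorem 4.60]{AB2}. The implications (iv) $\Rightarrow$ (iii) and (iii) $\Rightarrow$ (ii) are trivial: any isometric copy is an isomorphic copy, and any isometric copy of $\ell^\infty$ contains an isometric copy of $c_0$ (via the canonical inclusion $c_0\hookrightarrow\ell^\infty$).

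The substantive step is (ii) $\Rightarrow$ (i), which I argue contrapositively. Assume $\varphi$ fails the appropriate $\Delta_2$-condition. Theorem \ref{notD} provides a sequence $(f_k)\subset\mathcal{M}_{\varphi,w}$ of non-negative, pairwise disjointly supported functions with $\|f_k\|_{\mathcal{M}_{\varphi,w}}=1$ and $\bigl\|\sum_{k=1}^\infty f_k\bigr\|_{\mathcal{M}_{\varphi,w}}=1$. Define
\[
T:\ell^\infty\to\mathcal{M}_{\varphi,w},\qquad T(a)=\sum_{k=1}^\infty a_k f_k,
\]
where for $a=(a_k)\in\ell^\infty$ the series converges pointwise (at each point at most one summand is nonzero by disjointness). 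Using disjointness, $|T(a)|=\sum_k|a_k|f_k\le\|a\|_\infty\sum_k f_k$ pointwise, so by the lattice property $\|T(a)\|_{\mathcal{M}_{\varphi,w}}\le\|a\|_\infty$, which in particular shows $T(a)\in\mathcal{M}_{\varphi,w}$. Conversely, for every $k_0$ one has $|T(a)|\ge|a_{k_0}|f_{k_0}$, hence $\|T(a)\|_{\mathcal{M}_{\varphi,w}}\ge|a_{k_0}|$, and taking the supremum gives $\|T(a)\|_{\mathcal{M}_{\varphi,w}}\ge\|a\|_\infty$. Thus $T$ is a linear isometry, i.e.\ $\mathcal{M}_{\varphi,w}$ contains an isometric copy of $\ell^\infty$, contradicting (ii). The sequence case is identical, applying the sequence version of Theorem \ref{notD}.

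The only place where care is required is the (ii) $\Rightarrow$ (i) step: one must check that the sum in $T(a)$ genuinely defines an element of $\mathcal{M}_{\varphi,w}$, but this is immediate from the domination by $\|a\|_\infty\sum_k f_k$ and the Fatou property. Everything else is a bookkeeping chain of implications; there is no hidden obstacle.
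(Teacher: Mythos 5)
Your proof is correct and follows essentially the same route as the paper: the equivalences (i) $\iff$ (vi), (v) $\iff$ (vi), (iv) $\iff$ (v) and the descending chain (iv) $\Rightarrow$ (iii) $\Rightarrow$ (ii) are dispatched with the same ingredients (Corollary \ref{cor:sep}, the Fatou property, \cite[Theorem 4.60]{AB2}), and the crucial step (ii) $\Rightarrow$ (i) uses the same operator $T$ built from the disjoint sequence of Theorem \ref{notD}. The only noteworthy departure is in showing $\|T(a)\|\geq\|a\|_\infty$: the paper argues via the modular blow-up $Q_{\varphi,w}(rf_{k_0})=\infty$ for $r>1$ from Lemma \ref{cor:notdelta2}, whereas you observe directly that $|T(a)|\geq |a_{k_0}|f_{k_0}$ pointwise and invoke the lattice norm monotonicity together with $\|f_{k_0}\|=1$, which is a slightly cleaner way to extract the same inequality and avoids re-examining the modular. (Two small remarks: the paper's displayed definition $Tx=\sum_k |x(k)|f_k$ is not linear as written — your version $T(a)=\sum_k a_k f_k$ is what is actually meant; and the well-definedness of $T(a)\in\mathcal{M}_{\varphi,w}$ needs only the ideal property of the Banach function lattice, not the Fatou property, though citing Fatou causes no harm.)
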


\begin{proof}
	(iv) $\implies$ (iii) $\implies$ (ii) is clear. Since the Lebesgue measure $m$ is separable, the order-continuity of the space is equivalent to the separability of that space \cite[Theorem 1.5.5]{BS}. Moreover, from the fact that the spaces $\mathcal{M}_{\varphi,w}$ and $\mathfrak{m}_{\varphi,w}$ have the Fatou property \cite[Theorem 4.7]{KR}, we obtain (v) $\iff$ (vi).  The implication (iv) $\iff$ (v) is from \cite[Theorem 4.60]{AB2}. The implication (i) $\iff$ (vi) was shown in Corollary \ref{cor:sep}. 
	
	Hence, we only have to show (ii) $\implies$ (i). Let $\varphi$ be an Orlicz function which does not satisfy the appropriate $\Delta_2$-condition. In view of Theorem \ref{notD}, let $(f_k)_{k=1}^{\infty}$ be a sequence of non-negative functions in $\mathcal{M}_{\varphi,w}$ with pairwise disjoint supports such that $\|f_k\|_{\mathcal{M}_{\varphi,w}} =1$ and $\|\sum_{k=1}^{\infty}f_k\|_{\mathcal{M}_{\varphi,w}} = 1$ for every $k\in \mathbb{N}$. Define an operator $T : l^{\infty} \rightarrow \mathcal{M}_{\varphi,w}$ such that $Tx = \sum_{k=1}^{\infty} |x(k)| f_k$, where $x = (x(k))_{k=1}^{\infty} \in \ell_{\infty}$. Then we get  
	\[
	\|Tx\|_{\mathcal{M}_{\varphi,w}} = \left\|\sum_{k=1}^{\infty} |x(k)| f_k\right\|_{\mathcal{M}_{\varphi,w}} \leq \|x\|_{\infty}\left\|\sum_{k=1}^{\infty} f_k\right\|_{\mathcal{M}_{\varphi,w}} = \|x\|_{\infty}.
	\]
	
	Observe that for any $0<\lambda<1$, there exists $|x(k_0)|$ such that $\frac{|x(k_0)|}{\lambda \|x\|_{\infty}} > 1$. Hence by Lemma \ref{cor:notdelta2}, we have
	\[
	Q_{\varphi,w} \left(\frac{\sum_{k=1}^{\infty}|x(k)| f_k}{\lambda \|x\|_{\infty}}\right) > Q_{\varphi, w}\left(\frac{|x(k_0)| f_{k_0}}{\lambda \|x\|_{\infty}}\right) = \infty,
	\]
	which shows that $\|Tx\|_{\mathcal{M}_{\varphi,w}} = \|x\|_{\infty}$. Thus, the space $\mathcal{M}_{\varphi,w}$ contains an isometric copy of $l^\infty$ if $\varphi$ does not satisfy the appropriate $\Delta_2$-condition. The proof is finished.
	
	To show (ii) $\implies$ (i) for the space $\mathfrak{m}_{\varphi,w}$, the similar argument from the function case is applied in view of Theorem \ref{notD} and Lemma \ref{cor:notdelta2}.
\end{proof}

We finish this section with an observation. A Banach space $X$ is said to have the {\it Radon-Nikod\'ym property with respect to $(\Omega, \Sigma, \mu)$} if for every $\mu$-continuous vector measure $G: \Sigma \rightarrow X$ of bounded variation, there exists a Bochner integrable function $g:\Omega \rightarrow X$ such that $G(A) = \int_A g d\mu$ for every $A \in \Sigma$. If this holds for every finite measure space $(\Omega, \Sigma, \mu)$, then we say the space $X$ has the {\it Radon-Nikod\'ym property} (RNP). 

It is well-known that every separable dual space has the RNP, which in turn implies that the space has slices of arbitrarily small diameter (c.f. \cite{DU}). This property is in the opposite spectrum of Banach spaces with the Daugavet property and the diameter two properties, which have been active research areas in geometry of Banach spaces. In view of Theorem \ref{cor:sep}, we obtain the following consequence.

\begin{Theorem}\label{th:RNPM}
	Let $\varphi$ be an Orlicz N-function at infinity that satisfies the appropriate $\Delta_2$-condition. Then the space $\mathcal{M}_{\varphi,w}$ has the RNP. As a consequence, the unit ball of ${\mathcal{M}_{\varphi,w}}$ has slices of arbitrarily small diameters. 
\end{Theorem}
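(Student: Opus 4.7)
The plan is to identify $\mathcal{M}_{\varphi,w}$ as a separable dual Banach space and apply the classical fact that every separable dual space has the RNP. Set $\psi = \varphi_*$. Since $\varphi$ is an $N$-function at infinity, the complementary function $\psi$ is finite, and since $\varphi$ is convex with $\varphi(0)=0$ we have $\psi_* = \varphi_{**} = \varphi$. Theorem \ref{th:KLR} then yields the isometric K\"othe-dual identification $\mathcal{M}_{\varphi,w} = \mathcal{M}_{\psi_*,w} \simeq (\Lambda^{0}_{\psi,w})'$.

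The key step is to upgrade this to an identification with a genuine Banach dual. For this I would invoke \cite[Theorem 2.5.5]{BS} together with \cite[Corollary 1.4.2]{BS}: it suffices to verify that $\lim_{t\to 0^+}\phi_{\Lambda^{0}_{\psi,w}}(t) = 0$. This follows from the elementary estimate $\phi_{\Lambda^{0}_{\psi,w}}(t) \le 2/\psi^{-1}(1/W(t))$ together with $W(t)\to 0$ as $t\to 0^+$ and $\psi^{-1}(u)\to\infty$ as $u\to\infty$ (the latter holding because $\psi$ is finite and unbounded). Consequently $(\Lambda^{0}_{\psi,w})_a = (\Lambda^{0}_{\psi,w})_b$, simple functions are norm-dense in the order-continuous subspace, and one obtains the isometric isomorphism $\mathcal{M}_{\varphi,w} \simeq ((\Lambda^{0}_{\psi,w})_a)^{*}$.

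To finish, Corollary \ref{cor:sep} provides separability of $\mathcal{M}_{\varphi,w}$ from the $\Delta_2$-hypothesis on $\varphi$, so $\mathcal{M}_{\varphi,w}$ is a separable dual Banach space and therefore has the RNP. The assertion about slices of arbitrarily small diameter is then the standard dentability characterization of the RNP applied to the unit ball. The main technical obstacle is the passage from K\"othe dual to Banach dual, but it is handled cleanly by the short fundamental-function argument above; everything else is either an invocation of Theorem \ref{th:KLR} and Corollary \ref{cor:sep} or a citation of classical Banach space facts.
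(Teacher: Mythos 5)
Your proposal follows essentially the same route as the paper's own proof: identify $\mathcal{M}_{\varphi,w}$ (via Theorem~\ref{th:KLR} and $\varphi = \varphi_{**}$) as the Banach dual of $(\Lambda_{\varphi_*,w}^0)_a$, upgrade the Köthe-dual identification to a Banach-dual one by checking $\lim_{t\to 0+}\phi_{\Lambda^0_{\varphi_*,w}}(t)=0$ (which rests on $\varphi_*$ being finite because $\varphi$ is an $N$-function at infinity, plus the equivalence of the Luxemburg and Orlicz norms), invoke Corollary~\ref{cor:sep} for separability, and conclude by the separable-dual criterion for the RNP. The argument is correct and matches the paper's proof step for step.
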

 
 \begin{proof}
 	We claim that $\mathcal{M}_{\varphi,w}$ is a separable dual space. Since $\varphi$ is an $N$-function at infinity, the complementary function $\varphi_*$ is finite \cite{KLT4}, and so the inverse function $\varphi_*^{-1}$ is well-defined on $\mathbb{R}^{+}$. It is well-known that the fundamental function of $\Lambda_{\varphi_*, w}$ equipped with the Luxemberg norm is
 	\[
 	\phi_{\Lambda_{\varphi_*, w}}(t) = \|\chi_{[0, t]}\|_{\Lambda_{\varphi_*, w}} = \frac{1}{\varphi_*^{-1}(1/W(t))},
 	\]
 	where $W(t) = \int_0^t w(s)ds$, $t \in (0, \gamma)$ \cite{KR, KLT3}. Since the Luxemburg norm and the Orlicz norm are equivalent for Orlicz-Lorentz spaces (see pg 4), we have 
 	\[
 	\lim_{t \rightarrow 0+}\phi_{\Lambda_{\varphi_*,w}^0}(t) \leq 2\lim_{t \rightarrow 0+}\phi_{\Lambda_{\varphi_*,w}}(t).
 	\] 
 	
 	Hence, $\lim_{t \rightarrow 0+}\phi_{\Lambda_{\varphi_*,w}^0}(t) = 0$, and so $(\Lambda_{\varphi_*,w}^0)_a = (\Lambda_{\varphi_*,w}^0)_b$ by \cite[Theorem 2.5.5]{BS}. Moreover, from the fact that $\varphi = \varphi_{**}$, we have $ ((\Lambda_{\varphi_*,w}^0)_a)^* = (\Lambda_{\varphi_*,w}^0)' =  \mathcal{M}_{\varphi,w}$ because $\mathcal{M}_{\varphi,w}$ has the Fatou property. Therefore, the space $\mathcal{M}_{\varphi,w}$ is a dual space. The separability is a direct consequence of Theorem \ref{cor:sep}. Thus the conclusion is proved.
 \end{proof}

\begin{Remark}
	For an arbitrary Orlicz function, in particular not being an $N$-function at infinity, the analogous statement in Theorem \ref{th:RNPM} does not hold. For instance, if $\varphi(u) = ku$ for some $k > 0$, then the space $\mathcal{M}_{\varphi,w}$ becomes $L_1$. Since $L_1$ has the Daugavet property, the space does not have the RNP.
\end{Remark}

\section{Comparison between the spaces $\mathcal{M}_{\varphi,w}$}
In this section, we study the comparison between $\mathcal{M}_{\varphi,w}$ spaces given by different Orlicz functions $\varphi$. For this purpose, a certain order between Orlicz functions plays an important role. For two Orlicz functions $\varphi$ and $\psi$, we denote the order $\varphi \prec \psi$ (resp. $\varphi \prec_{\infty} \psi$)
if there exists $b > 0$
(resp. there exist $b > 0$ and $u_0 \geq 0$) such that $\varphi(u) \leq \psi(bu)$ for every $u \geq 0$ (resp. $u \geq u_0$) \cite[Definition 2.2.1]{RR}. Using this order relation, the comparison results on Orlicz-Lorentz spaces were studied in \cite{K, BRST}.
    
\begin{Theorem}\label{th:comparison}
	Let $\varphi_1$ and $\varphi_2$ be Orlicz functions. Then the following statements are equivalent:
	\begin{enumerate}[\rm(i)]
		\item $\varphi_1 \prec \varphi_2$ (resp. $\varphi_1 \prec_{\infty} \varphi_2)$ when $\gamma = \infty$ (resp. $\gamma < \infty$).  
		\item $\mathcal{M}_{\varphi_2,w} \hookrightarrow \mathcal{M}_{\varphi_1,w}$. 
	\end{enumerate}
	
\end{Theorem}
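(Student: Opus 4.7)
The plan is to establish the two implications by very different means: modular manipulation for (i) $\Rightarrow$ (ii) and the fundamental function applied to characteristic functions for (ii) $\Rightarrow$ (i). For (i) $\Rightarrow$ (ii), assume $\varphi_1(u) \leq \varphi_2(bu)$ for all $u \geq 0$ in the infinite measure case, or for $u \geq u_0$ in the finite measure case. Given $f \in \mathcal{M}_{\varphi_2, w}$ with $\|f\|_{\mathcal{M}_{\varphi_2, w}} = \epsilon > 0$, so that $Q_{\varphi_2, w}(f/\epsilon) \leq 1$, the homogeneity $((cf)^*)^0 = c(f^*)^0$ combined with the pointwise bound on $\varphi_1$ yields
\[
Q_{\varphi_1, w}(f/(b\epsilon)) = \int_I \varphi_1\!\left(\frac{(f^*)^0}{b\epsilon\, w}\right) w \leq \int_I \varphi_2\!\left(\frac{(f^*)^0}{\epsilon\, w}\right) w = Q_{\varphi_2, w}(f/\epsilon) \leq 1,
\]
so $\|f\|_{\mathcal{M}_{\varphi_1, w}} \leq b\|f\|_{\mathcal{M}_{\varphi_2, w}}$. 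When $\gamma < \infty$ and only $\varphi_1 \prec_\infty \varphi_2$ is available, I would split the integral over $\{(f^*)^0/w \geq b\epsilon u_0\}$, where the inequality still applies, and its complement, where the integrand is bounded by $\varphi_1(u_0)$ and the base measure by $W(\gamma) < \infty$; the resulting additive defect $\varphi_1(u_0) W(\gamma)$ is then absorbed into a single constant by convexity after a further rescaling of $b$.

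For (ii) $\Rightarrow$ (i), suppose the embedding gives $\|g\|_{\mathcal{M}_{\varphi_1, w}} \leq C\|g\|_{\mathcal{M}_{\varphi_2, w}}$ for every $g$. Testing on $g = \chi_{[0, t]}$ and invoking Lemma \ref{lem:Mfund}, the fundamental function inequality reduces, after cancellation of the common factor $t/W(t)$, to a pointwise comparison of $\varphi_1^{-1}$ and $\varphi_2^{-1}$ at the value $s = 1/W(t)$. Since $W$ is continuous and strictly increasing with $W(0+) = 0$ and, by standing assumption, $W(\infty) = \infty$, the range of $s$ is $(0, \infty)$ when $\gamma = \infty$ and $(1/W(\gamma), \infty)$ when $\gamma < \infty$. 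The substitution $u = \varphi_2^{-1}(s)$ then converts this into $\varphi_1(v) \leq \varphi_2(Cv)$ for $v$ in the corresponding range, which is precisely $\varphi_1 \prec \varphi_2$ in the infinite case and $\varphi_1 \prec_\infty \varphi_2$ in the finite case, with the threshold $u_0$ arising from the endpoint $1/W(\gamma)$.

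The main delicate point will be the inversion step in (ii) $\Rightarrow$ (i). Since Orlicz functions in the paper's sense need not be strictly increasing or finite, $\varphi^{-1}$ must be understood as the generalized (right-continuous) inverse, and the equivalence $\varphi(v) \leq s \Leftrightarrow \varphi^{-1}(s) \geq v$ must be justified carefully on the appropriate range of $s$. A related technicality is pinning down the correct threshold $u_0$ from the endpoint $1/W(\gamma)$ in the finite measure case, and, symmetrically on the forward side, ensuring that the additive defect produced by the bounded-integrand region is genuinely absorbed into a single uniform multiplicative norm constant.
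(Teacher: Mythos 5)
Your proposal is correct, and the two directions are of different characters relative to the paper's argument.

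For (i) $\Rightarrow$ (ii), your plan is essentially the paper's: split $I$ into the set where $(f^*)^0/w$ is small (bounded contribution, controlled by $W(\gamma)<\infty$) and its complement (where $\varphi_1(u)\leq\varphi_2(bu)$ applies), then absorb the additive defect multiplicatively via convexity. The paper does this by writing $M = \varphi_1(u_0)(W(\gamma)-W(\gamma-mE))+1$ and showing $Q_{\varphi_1,w}\big(f/(Mb\|f\|_{\mathcal{M}_{\varphi_2,w}})\big)\leq 1$. Your plan to rescale $b$ further is a cosmetic difference.

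For (ii) $\Rightarrow$ (i), you take a genuinely different route, and a cleaner one. The paper argues contrapositively: assuming $\varphi_1\not\prec_\infty\varphi_2$, it extracts a sequence $(a_n)$ with $\varphi_1(a_n)>\varphi_2(2^n n^2 a_n)$ and hand-builds the decreasing function $f=\sum_n na_n w\chi_{[t_n,t_{n-1})}$, using Proposition \ref{prop:declevel} to make the level function trivial and then showing $Q_{\varphi_2,w}(f)<1$ but $Q_{\varphi_1,w}(\epsilon f)=\infty$ for every $\epsilon>0$. Your approach instead observes that the bounded inclusion forces $\phi_{\mathcal{M}_{\varphi_1,w}}\leq C\,\phi_{\mathcal{M}_{\varphi_2,w}}$, which by Lemma \ref{lem:Mfund} cancels to $\varphi_2^{-1}(s)\leq C\,\varphi_1^{-1}(s)$ for $s$ in the range of $1/W(t)$, namely $(0,\infty)$ when $\gamma=\infty$ and $(1/W(\gamma),\infty)$ when $\gamma<\infty$. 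The substitution then converts this to $\varphi_1(v)\leq\varphi_2(Cv)$ on the corresponding $v$-range, giving $\prec$ or $\prec_\infty$ with threshold $u_0=\varphi_1^{-1}(1/W(\gamma))$. This is shorter and conceptually nicer: it shows the constraint is already visible at the level of characteristic functions. What the paper's construction buys is independence from the precise closed-form of the fundamental function, and the constructed $f$ is reusable elsewhere (it is the same template used in Lemma \ref{cor:notdelta2}). Two points to make precise in your write-up: (a) the uniform constant $C$ needs a word about why the inclusion is bounded (closed graph, using the Fatou property of both spaces); (b) the worry about generalized inverses is actually moot here, since an Orlicz function $\varphi:\mathbb{R}^+\to[0,\infty)$ that is convex with $\varphi(0)=0$ and $\varphi(u)>0$ for $u>0$ is automatically continuous, strictly increasing, and onto $[0,\infty)$, so $\varphi^{-1}$ is an honest inverse. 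Also be careful reading Lemma \ref{lem:Mfund}: the displayed $\phi_{\mathcal{M}_{\varphi,w}}(t)$ must be understood as $\frac{t}{W(t)\,\varphi^{-1}(1/W(t))}$ (this is what the computation from (\ref{eq:charlevel}) gives and what makes the stated limit at $0+$ correct), so that cancellation yields $\varphi_2^{-1}\leq C\varphi_1^{-1}$ rather than the reverse; with that reading your substitution $u=\varphi_2^{-1}(s)$ lands exactly on $\varphi_1(v)\leq\varphi_2(Cv)$.
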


\begin{proof}
	We only show the proof of the statement when $\gamma < \infty$. Suppose that $\varphi_1 \prec_{\infty} \varphi_2$. Let $f \in \mathcal{M}_{\varphi_2,w}$. There exist $b > 0$ and $u_0 \ge 0$ such that $\varphi_1(u) \leq \varphi_2(bu)$ for every $u \geq u_0$. Define a set $E = \{t \in I : (f^*)^0(t) \leq u_0b\|f\|_{\mathcal{M}_{\varphi_2,w}}w(t)\}$. From the fact that $\frac{(f^*)^0}{w}$ is a decreasing function, the set $E$ is actually the interval $[\gamma - mE, \gamma]$. Let $M = \varphi_1(u_0)(W(\gamma) - W(\gamma - mE)) + 1$. Since $Q_{\varphi, w}$ is a convex modular and $Q_{\varphi_2,w}\left(\frac{f}{\|f\|_{\mathcal{M}_{\varphi_2,w}}}\right) \leq 1$, 
	
	\begin{eqnarray*}
		Q_{\varphi_1, w} \left(\frac{f}{Mb\|f\|_{\mathcal{M}_{\varphi_2,w}}}\right) &=& \int_I \varphi_1 \left(\frac{(f^*)^0}{Mb\|f\|_{\mathcal{M}_{\varphi_2,w}}w}\right)w\\
		&\leq& \frac{1}{M} \left(\int_{\gamma - mE}^{\gamma}\varphi_1 \left(\frac{(f^*)^0}{b\|f\|_{\varphi_2,w}w}\right)w + \int_I\varphi_1 \left(\frac{(f^*)^0}{b\|f\|_{\varphi_2,w}w}\right)w\right)\\
		&\leq& \frac{1}{M} \left(\varphi_1(u_0)(W(\gamma) - W(\gamma - mE))  + \int_I\varphi_2 \left(\frac{(f^*)^0}{\|f\|_{\varphi_2,w}w}\right)w\right)\\
		&\leq& \frac{1}{M} \cdot M = 1.
	\end{eqnarray*}	
	Thus, we obtain $\|f\|_{\mathcal{M}_{\varphi_2,w}} \leq b \|f\|_{\mathcal{M}_{\varphi_1,w}}$ by the definition of the Luxemburg norm $\|\cdot\|_{\mathcal{M}_{\varphi,w}}$, and so $\mathcal{M}_{\varphi_2,w} \subset \mathcal{M}_{\varphi_1,w}$. This shows (i) $\implies$ (ii). 
	
	For (ii) $\implies$ (i), assume to the contrary that $\varphi_1 \not\prec_{\infty} \varphi_2$. Then for every $\epsilon > 0$, there exists a sequence of real numbers $(a_n)_{n=1}^{\infty}\uparrow \infty$ such that $\varphi_1 (a_n) > \varphi_2(2^n n^2 a_n)$ for every $n \in \mathbb{N}$. We see from the convexity of the Orlicz function $\varphi_2$ that $\varphi_1(a_n) > \varphi_2(2^n n^2 a_n) \geq 2^n\varphi_2(n^2 a_n)$ for all $n \in \mathbb{N}$.
	
	Passing to a subsequence if necessary, we can choose the sequence $(a_n)_{n=1}^{\infty}$ such that 
	\[
	\sum_{n=1}^{\infty} \frac{1}{2^n \varphi_2(n^2 a_n)} < \int_{0}^{\gamma} w.
	\]       
	Let $t_0$ be a positive real number such that 
	\[
	\sum_{n=1}^{\infty} \frac{1}{2^n \varphi_2(n^2 a_n)}  = \int_0^{t_0}w.
	\]
	Then, there exists a sequence of real numbers $(t_n)_{n=1}^{\infty} \downarrow 0$ such that 
	\[
	\frac{1}{2^n \varphi_2(n^2 a_n)} = \int_{t_n}^{t_{n-1}}w.
	\]  
	Now, define
	\[
	f := \sum_{n=1}^{\infty} na_nw\chi_{[t_n, t_{n-1})}.
	\]
	By Proposition \ref{prop:declevel}, we see that $f = f^* = (f^*)^0$. Hence, 
	\[
	Q_{\varphi_2,w}(f) = \sum_{n=1}^{\infty} \varphi_2(na_n) \int_{t_n}^{t_{n-1}}w = \sum_{n=1}^{\infty} \frac{\varphi_2(na_n)}{2^n \varphi_2(n^2 a_n)} < \sum_{n=1}^{\infty}\frac{1}{2^n} = 1.
	\] 
	This shows that $f \in \mathcal{M}_{\varphi_2,w}$. On the other hand, for every $\epsilon > 0$ there exists $n_0 \in \mathbb{N}$ such that $n_0 \epsilon > 1$, so 
		\begin{eqnarray*}
			Q_{\varphi_1,w}(\epsilon f) \geq \sum_{n=n_0 + 1}^{\infty} \varphi_1 (\epsilon n a_n) \int_{t_n}^{t_{n-1}}w \geq \sum_{n=n_0 + 1}^{\infty} \varphi_1 \left(a_n\right) \int_{t_n}^{t_{n-1}}w \geq \sum_{n=n_0 + 1}^{\infty} \frac{2^n\varphi_2(n^2 a_n)}{2^n\varphi_2(n^2 a_n)} = \infty. 
	\end{eqnarray*}
	Therefore, $f \notin \mathcal{M}_{\varphi_1,w}$ and so $\mathcal{M}_{\varphi_2,w} \not\subset \mathcal{M}_{\varphi_1,w}$.
\end{proof}

\section{The Orlicz norm in $\mathcal{M}_{\varphi,w}$}\label{sec:OrliczM}
From Theorem \ref{th:KLR} and the fact that $\mathcal{M}_{\varphi,w}$ is r.i. \cite[Theorem 4.7]{KR}, we can also look at the Orlicz norm for $\mathcal{M}_{\varphi,w}$ as the K\"othe dual norm of $\Lambda_{\varphi_*,w}$ given by
\small
\begin{equation}\label{eq:orlicz}
	\|f\|^0_{\mathcal{M}_{\varphi,w}} = \sup\left\{ \int_I |fg| : \rho_{\varphi_*,w}(g) \leq 1\right\} = \sup\left\{ \int_I f^*g^* : \rho_{\varphi_*,w}(g) \leq 1\right\} = \inf_{k >0} \left(\frac{1}{k}(1 + P_{\varphi,w}(kf))\right).
\end{equation}
\normalsize
Now, we want to see when the infimum in (\ref{eq:orlicz}) is achieved. If $\varphi$ is an $N$-function, then $\varphi_*$ is also an $N$-function \cite{Kra} and its right-derivative $p(t) = \varphi'_+(t) \rightarrow \infty$ as $t \rightarrow \infty$. Also, we have $P_{\varphi,w}(f) = Q_{\varphi,w}(f)$ for every $f \in L_0$ \cite[Theorem 4.7]{KLR}. First, we define the interval $\bar{K}(f) = \left[{\bar{k}}^*, {\bar{k}}^{**}\right]$ and the constant $\bar{\theta}(f)$ for $f \in \mathcal{M}_{\varphi,w}$ by
\begin{align*}
	{\bar{k}}^*&= {\bar{k}}^*(f) = \inf\left\{k > 0 : \rho_{\varphi_*,w}\left(p\left(\frac{k(f^*)^0}{w}\right)\right) \geq 1 \right\}, \\
	{\bar{k}}^{**} &={\bar{k}}^{**}(f) = \sup\left\{k > 0 : \rho_{\varphi_*,w}\left(p\left(\frac{k(f^*)^0}{w}\right)\right) \leq 1\right\},\\
	\bar{\theta} &= \bar{\theta}(f) = \inf\left\{\lambda > 0 : P_{\varphi,w}\left(\frac{f}{\lambda}\right) < \infty\right\},
\end{align*}
In general, for $f \in \mathcal{M}_{\varphi,w}^0 \setminus \{0\}$ we see that $0 < \bar{k}^{*} \leq \bar{k}^{**} \leq \infty$. However, under our assumption on $\varphi$, the interval $\bar{K}(f)$ is always bounded, as we show in the next result.

	\begin{Lemma}\label{lem:NfuncinftyK}
		If $\varphi$ is an $N$-function, then  $\bar{k}^{**}(f) < \infty$ for every $f \in \mathcal{M}_{\varphi,w}$.
	\end{Lemma}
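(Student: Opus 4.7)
The plan is to argue by contradiction. Fix $f \in \mathcal{M}_{\varphi,w}$ with $f \neq 0$, and suppose $\bar{k}^{**}(f) = \infty$. By the definition of $\bar{k}^{**}$, this means
\[
\rho_{\varphi_*,w}\!\left(p\!\left(\frac{k(f^*)^0}{w}\right)\right) \le 1 \quad \text{for every } k > 0.
\]
My first move would be to reduce the left-hand side to an ordinary integral. By the structural result for Halperin's level function recalled in Section~3, one may write $(f^*)^0 = Dw$ with $D$ decreasing. Since $p = \varphi'_+$ is nondecreasing, $p(kD)$ is decreasing on $I$, so it coincides with its own decreasing rearrangement. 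The inequality above therefore collapses to
\[
\int_I \varphi_*(p(kD))\,w \le 1 \qquad \text{for every } k > 0.
\]

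The next step is to pin down a set on which $D$ is bounded below by a positive constant. Since $f \neq 0$ and $f^*$ is decreasing, there exists $\beta > 0$ with $f^* > 0$ on $(0, \beta)$; note also that $w > 0$ everywhere, as this follows from $W(\infty) = \infty$ together with the monotonicity of $w$. Inspecting the definition of the level function shows $D > 0$ on $(0, \beta)$: on a maximal level interval $(a, b)$ meeting $(0, \beta)$ one has $F(a, b) > 0$, hence $D \equiv F(a, b)/W(a, b) > 0$ there; outside such intervals $D = f^*/w > 0$. Fix any $t_0 \in (0, \beta)$ and set $c := D(t_0) > 0$. By the monotonicity of $D$, $D(t) \ge c$ on $[0, t_0]$, and hence $p(kD(t)) \ge p(kc)$ there.

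To finish, I would invoke the two facts about $N$-functions quoted in the paper just above the statement, namely that $\varphi_*$ is itself an $N$-function and that $p(u) \to \infty$ as $u \to \infty$. Together they yield $\varphi_*(p(kc)) \to \infty$ as $k \to \infty$, and then
\[
\int_I \varphi_*(p(kD))\,w \;\ge\; \int_0^{t_0} \varphi_*(p(kc))\,w \;=\; \varphi_*(p(kc))\, W(t_0) \;\longrightarrow\; \infty,
\]
contradicting the uniform bound by $1$. The only mildly technical point is the reduction of $\rho_{\varphi_*,w}$ to a plain integral, which rests on the monotonicity of $D$ and the increasingness of $p$; everything else is a direct monotonicity argument, so I do not expect a real obstacle.
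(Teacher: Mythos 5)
Your proof is correct and follows the same contradiction-as-$k\to\infty$ strategy as the paper, but is more direct. The paper bounds $f^*$ below by a decreasing simple function $\tilde{f}$ with support of finite measure, explicitly computes $(\tilde f)^0$, extracts the first maximal level interval $(0,t_{i_1})$, and then drives $\varphi_*(p(k\tilde F(t_{i_1})/W(t_{i_1})))W(t_{i_1})\to\infty$. You instead work directly with the known structural form $(f^*)^0 = Dw$, $D$ decreasing, observe that $p(kD)$ is then decreasing so the rearranged modular is a plain integral, and lower-bound $D$ by a positive constant on an initial interval. Both constructions serve the same purpose---producing a positive constant $c$ with $W$-positive support so that $\varphi_*(p(kc))W(t_0)\to\infty$---but yours avoids the detour through simple functions and the explicit computation of their level functions, and it also makes transparent that only $p(u)\to\infty$ and $\varphi_*(u)\to\infty$ are needed (the paper uses the stronger $\varphi_*(u)/u\to\infty$ but that is not actually required). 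One small point of precision: the argument that $w>0$ on all of $I$ from $W(\infty)=\infty$ is only valid when $\gamma=\infty$; for $\gamma<\infty$ the paper allows $W(\gamma)<\infty$, so you should instead say that $w$ is strictly positive on an initial interval $(0,s)$ (otherwise $\Lambda_{\varphi,w}$ degenerates) and pick $t_0<\min(\beta,s)$ so that $W(t_0)>0$; this does not affect the substance of the argument.
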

	
	\begin{proof}
		Assume to the contrary that $\bar{k}^{**} = \infty$. Then  $\rho_{\varphi_*,w}\left(p\left(\frac{k(f^*)^0}{w}\right)\right) \leq 1$ for every $k > 0$. Let $\tilde{f} = \sum_{i=1}^{n}c_n \chi_{[t_{i-1}, t_i)}$, where $t_0 = 0 < t_1 < t_2 < \cdots < t_n < \infty$ be a decreasing simple function with support of finite measure such that $\tilde{f} \leq f^*$ a.e. Denote $\tilde{F}(a, b) = \int_a^b \tilde{f}$. Then
		\[
		(\tilde{f})^0(t) = \sum_{j=1}^{m} \frac{\tilde{F}(t_{i_{j-1}, t_{i_j}})}{W(t_{i_{j-1}, t_{i_j}})}w\chi_{[t_{i_{j-1}}, t_{i_j})}(t),
		\]
		where $t_{i_0} = t_0 = 0$ and $t_{i_j} = t_s$ for some $s \in \{1, \cdots, n\}$, and $t_{i_1} < t_{i_2} < \cdots < t_{i_m}$. From Proposition \ref{prop:Halmono}, we have $p\left(\frac{k(\tilde{f})^0}{w}\right) \leq p\left(\frac{k(f^*)^0}{w}\right)$ a.e., and so 
		\begin{eqnarray*}
			\int_0^{t_{i_1}} \varphi_*\left(p\left(\frac{k(\tilde{f})^0}{w}\right)\right)w &=& \varphi_*\left(p\left(\frac{k\tilde{F}(t_{i_1})}{W(t_{i_1})}\right)\right)W(t_{i_1})\\
			&\leq&\sum_{j=1}^{m} \varphi_*\left(p\left(\frac{\tilde{F}(t_{i_{j-1}}, t_{i_{j}} )}{W(t_{i_{j-1}}, t_{i_{j}})}\right)\right)W(t_{i_{j-1}}, t_{i_{j}})\\
			&=& \int_I \varphi_*\left(p\left(\frac{k(\tilde{f})^0}{w}\right)\right)w\\
			&\leq& \int_I \varphi_*\left(p\left(\frac{k(f^*)^0}{w}\right)\right)w \leq 1, 
		\end{eqnarray*}
		where $(0, t_{i_1})$ is the first maximal level interval of $\tilde{f}$. Therefore,
		\begin{equation*}
			\varphi_*\left(p\left(\frac{k\tilde{F}(t_{i_1})}{W(t_{i_1})}\right)\right) \bigg{/} p\left(\frac{k\tilde{F}(t_{i_1})}{W(t_{i_1})}\right)\leq 1 \bigg{/} \left(p\left(\frac{k\tilde{F}(t_{i_1})}{W(t_{i_1})}\right) W(t_{i_1})\right).
		\end{equation*}
		Since $\varphi$ is an $N$-function, the complementary function $\varphi_*$ is also an $N$-function. So $\lim_{u \rightarrow \infty} \frac{\varphi_*(u)}{u} = \infty$. Moreover, $\lim_{t \rightarrow \infty} p(t) = \infty$. Hence the left-hand side goes to infinity as $k \rightarrow \infty$, but the right-hand side goes to $0$ as $k \rightarrow \infty$, which is a contradiction.
	\end{proof}

\begin{Lemma}\label{lem:relyoung}
	Let $f \in \mathcal{M}_{\varphi,w}$ and $k >0$, $n\in\mathbb{N}$. If $f_n = f\chi_{\{\frac{1}{n} < |f| \leq n\}}$, then $\rho_{\varphi_*,w}\left(p\left(\frac{k(f_n^*)^0}{w}\right)\right) < \infty$.    
\end{Lemma}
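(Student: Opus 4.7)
The plan is to reduce the problem to bounding $p((f_n^*)^0/w)$ by a multiple of an indicator, then integrate. First I would observe that by construction $|f_n| \leq n$ and $\supp f_n \subset \{|f| > 1/n\}$, which has finite measure by Proposition \ref{prop:distfin} since $f \in \mathcal{M}_{\varphi,w}$. Setting $s_n := m(\supp f_n^*) < \infty$, equimeasurability gives $f_n^* \leq n\chi_{(0, s_n)}$ a.e.

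Next I would apply monotonicity of the level function (Proposition \ref{prop:Halmono}) together with the explicit formula (\ref{eq:charlevel}) to obtain the pointwise bound
\[
\frac{(f_n^*)^0}{w} \leq \frac{(n\chi_{(0, s_n)})^0}{w} = \frac{n s_n}{W(s_n)}\chi_{(0, s_n)} \quad \text{a.e.}
\]
Since $(f_n^*)^0$ has the form $Dw$ with $D$ decreasing, the ratio $(f_n^*)^0/w$ is itself decreasing; because $p$ is non-decreasing with $p(0) = 0$ (which holds as $\varphi$ is an N-function), the function $p\bigl(k(f_n^*)^0/w\bigr)$ is decreasing and hence coincides with its own decreasing rearrangement.

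Combining the two observations, the argument finishes with the direct estimate
\[
\rho_{\varphi_*,w}\!\left(p\!\left(\frac{k(f_n^*)^0}{w}\right)\right) = \int_I \varphi_*\!\left(p\!\left(\frac{k(f_n^*)^0}{w}\right)\right) w \leq \varphi_*\!\left(p\!\left(\frac{k n s_n}{W(s_n)}\right)\right) W(s_n),
\]
which is finite because $\varphi_*$ is an N-function whenever $\varphi$ is, and hence takes finite values everywhere. I do not anticipate any real obstacle; the argument mirrors that of Proposition \ref{lem:pbounded} with $\varphi$ replaced by $\varphi_* \circ p$, and every ingredient (the explicit level function of a characteristic function, monotonicity of the level function under pointwise ordering, finite measure of $\supp f_n$, and finiteness of $\varphi_*$) has already been established in the excerpt.
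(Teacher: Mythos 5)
Your proof is correct and takes essentially the same route as the paper: bound $f_n^*$ by $n\chi_{(0,s_n)}$ with $s_n = m(\supp f_n^*) < \infty$ (finite by Proposition~\ref{prop:distfin}), push this through the level operation via Proposition~\ref{prop:Halmono} and the explicit formula~(\ref{eq:charlevel}), and then integrate the resulting constant upper bound against $w$ over the finite-measure interval. The intermediate remark that $p\bigl(k(f_n^*)^0/w\bigr)$ equals its own decreasing rearrangement is not actually needed for the final estimate, and the appeal to $\varphi_*$ being finite (via the standing $N$-function hypothesis in Section~\ref{sec:OrliczM}) is left implicit in the paper but you correctly make it explicit.
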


\begin{proof}
	Let $f \in \mathcal{M}_{\varphi,w}$ and define $f_n = f\chi_{\{\frac{1}{n} < |f| \leq n\}}$, $n \in \mathbb{N}$. Since $f$ is equimeasurable to $f^*$, by Proposition \ref{prop:distfin}, $m\{\frac{1}{n} < f^* \leq n\} \leq m\{ f^* > \frac{1}{n}\} < \infty$ for all $n \in \mathbb{N}$. Let $c = m\{\frac{1}{n} < f^* \leq n\}$. Hence each function $f_n$ is bounded with support of finite measure. Furthermore, notice that $f_n^* \leq n \chi_{(0,c)}$ a.e., so $(f_n^*)^0 \leq (n \chi_{(0,c)})^0$ a.e. by Proposition \ref{prop:Halmono}. Since $p$ is an increasing function, for $k > 0$ we have $p\left(\frac{k(f_n^*)^0}{w}\right) \leq p\left(\frac{k(n \chi_{(0,c)})^0}{w}\right)$ a.e. Hence
	\[
	\rho_{\varphi_*,w}\left(p\left(\frac{k(f_n^*)^0}{w}\right)\right) = \int_I \varphi_* \left(p\left(\frac{k(f_n^*)^0}{w}\right)\right)w \leq \int_I \varphi_* \left(p\left(\frac{k(n \chi_{(0,c)})^0}{w}\right)\right)w.
	\]
	
	Moreover,
	\[
	\int_I \varphi_* \left(p\left(\frac{k(n \chi_{(0,c)})^0}{w}\right)\right)w = \int_I \varphi_* \left(p\left(kn \frac{c}{W(c)}\chi_{(0,c)}\right)\right)w = \varphi_*(M)W(c), 
	\]
	
	where $M = p\left(kn \frac{c}{W(c)}\right)$. Since $W(c) < \infty$, we have $\rho_{\varphi_*,w}\left(p\left(\frac{k(f_n^*)^0}{w}\right)\right) < \infty$.  
\end{proof}

Before presenting a few basic properties of the Orlicz norm in $\mathcal{M}_{\varphi,w}$, we start with an auxillary lemma. The similar fact with respect to simple functions was shown in \cite{KLT2}, which was essential to prove the M-ideal properties in Orlicz-Lorentz spaces. Here we present a quick proof based on properties of level functions. 

\begin{Lemma}\label{lem:aux}
	Let $\varphi$ be an $N$-function and $f \in L_0$ such that $P_{\varphi,w}(f) < \infty$. Then the inverse level function $w^{f^*}$ satisfies $w^{f^*}\prec w$ and
	\begin{eqnarray*}
	P_{\varphi,w}(f) = Q_{\varphi,w}(f) &=& \int_I\varphi\left(\frac{(f^*)^0}{w}\right)w = \int_I\varphi\left(\frac{f^*}{w^{f^*}}\right)w^{f^*},\\	
	\rho_{\varphi_*,w}\left(p\left(\frac{(f^*)^0}{w}\right)\right) &=& \int_I \varphi_*\left(p\left(\frac{f^*}{w^{f^*}}\right)\right)w^{f^*} 
	\end{eqnarray*} 
\end{Lemma}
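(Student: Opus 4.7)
The plan is to read off all three conclusions from the structure of the level function $(f^*)^0$ on the maximal level intervals of $f^*$. Recall from Section 3 that $(f^*)^0 = c_i w$ on each maximal level interval $(a_i, b_i)$ with constant $c_i = F(a_i, b_i)/W(a_i, b_i)$, and $(f^*)^0 = f^*$ on the complement $E$ of $\bigcup_i (a_i, b_i)$. The inverse level function $w^{f^*}$ is characterized by the identity $f^*/w^{f^*} = (f^*)^0/w$, which forces $w^{f^*} = w$ on $E$ and $w^{f^*} = f^*/c_i$ on each $(a_i, b_i)$. The equality $P_{\varphi,w}(f) = Q_{\varphi,w}(f)$ is invoked directly from \cite[Theorem 8.9]{KR2} under the standing hypothesis that $\varphi$ is an $N$-function, while the two representations $\int_I \varphi((f^*)^0/w)\,w = \int_I \varphi(f^*/w^{f^*})\,w^{f^*}$ are built into the definition of $Q_{\varphi,w}$ in Section 3.

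For the submajorization $w^{f^*} \prec w$, I would fix $t \in I$ and split $\int_0^t w^{f^*}$ into the contributions from $E \cap [0,t]$, from the complete maximal level intervals contained in $[0,t]$, and from a possible partial tail $(a_i, t]$ when $t \in (a_i, b_i)$. On $E \cap [0,t]$ the integrals agree trivially since $w^{f^*} = w$. On any complete level interval $(a_i, b_i) \subset [0, t]$, $\int_{a_i}^{b_i} w^{f^*} = F(a_i, b_i)/c_i = W(a_i, b_i) = \int_{a_i}^{b_i} w$ by the very choice of $c_i$. On the tail $(a_i, t]$, $\int_{a_i}^{t} w^{f^*} = F(a_i, t)/c_i$, and by the defining property of a maximal level interval, $F(a_i, t)/W(a_i, t) \leq c_i$, which gives $\int_{a_i}^{t} w^{f^*} \leq \int_{a_i}^{t} w$. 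Summing yields $\int_0^t w^{f^*} \leq W(t)$, i.e.\ $w^{f^*} \prec w$.

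The third identity $\rho_{\varphi_*, w}(p((f^*)^0/w)) = \int_I \varphi_*(p(f^*/w^{f^*}))\,w^{f^*}$ follows by the same decomposition. On $E$, both integrands coincide as $\varphi_*(p(f^*/w))w$. On each maximal level interval $(a_i, b_i)$, the arguments $(f^*)^0/w$ and $f^*/w^{f^*}$ both equal the constant $c_i$, so the left-hand side contributes $\varphi_*(p(c_i))\,W(a_i, b_i)$ while the right-hand side contributes $\varphi_*(p(c_i)) \int_{a_i}^{b_i} w^{f^*} = \varphi_*(p(c_i))\,W(a_i, b_i)$; summing over $i$ and adding the $E$-contribution closes the identity. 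The principal technical obstacle is justifying the term-by-term reassembly when the family of maximal level intervals is countably infinite and their endpoints may accumulate, but this is handled by the countability of the family recorded in Section 3 together with Tonelli's theorem applied to each integrand as a $\sigma$-additive set function of the measurable domain.
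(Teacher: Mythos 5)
Your proof is correct, and for the third identity it uses exactly the paper's argument: decompose $I$ into the complement of the maximal level intervals (where $w^{f^*}=w$ and $(f^*)^0=f^*$) and the countably many maximal level intervals (where both quotients reduce to the constant $c_i = F(a_i,b_i)/W(a_i,b_i)$). Where you go beyond the paper is in proving $w^{f^*}\prec w$ from scratch — the paper simply cites \cite{KLR, KR2} for the first two displayed identities and the submajorization — and your direct computation (equality on $E$ and on complete level intervals, the inequality $F(a_i,t)/W(a_i,t)\le c_i$ from the definition of a level interval on a partial tail) is a clean, self-contained way to get it.
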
 

\begin{proof}
	The first part and the fact that $w^{f^*} \prec w$ are  well-known \cite{KLR, KR2}. The second part comes from the definition of the inverse level function. Indeed, if we denote a countable family of maximal level intervals by $\{(a_i, b_i): i \in \mathbb{N}\}$, we see that
	\begin{eqnarray*}
		\int_I \varphi_*\left(p\left(\frac{f^*}{w^{f^*}}\right)\right)w^{f^*} &=& \int_{I \setminus \cup(a_i, b_i)}\varphi_*\left(p\left(\frac{f^*}{w}\right)\right)w + \sum_{i = 1}^{\infty}\varphi_*\left(p\left(\frac{F(a_i, b_i)}{W(a_i, b_i)}\right)\right)W(a_i,b_i)\\
		&=& \int_I \varphi_*\left(p\left(\frac{(f^*)^0}{w}\right)\right)w = \rho_{\varphi_*, w}\left(p\left(\frac{(f^*)^0}{w}\right)\right)
	\end{eqnarray*}
	
\end{proof}
Now, we provide some results on the Orlicz norm for $\mathcal{M}_{\varphi,w}$ that are similar to those for Orlicz-Lorentz spaces \cite{WC, WN}. 

\begin{Theorem}\label{th:OrliczM}
	Let $\varphi$ be an N-function. For $f\in \mathcal{M}_{\varphi,w}$,
	\begin{enumerate}[\rm(i)]
		\item $\|f\|_{\mathcal{M}_{\varphi,w}}\leq \|f\|_{\mathcal{M}_{\varphi,w}}^0 \leq 2\|f\|_{\mathcal{M}_{\varphi,w}}$.
		\item If there exists $k>0$ such that $\rho_{\varphi_*,w} \left(p\left(\frac{k(f^*)^0}{w}\right)\right) = 1$, then $\|f\|_{\mathcal{M}_{\varphi,w}}^0 = \int_I f^*p\left(\frac{k(f^*)^0}{w}\right) = \frac{1}{k}(1 + P_{\varphi,w} (kf))$.
		\item $k \in \bar{K}(f)$ if and only if $\|f\|_{\mathcal{M}_{\varphi,w}}^0 = \frac{1}{k}(1 + P_{\varphi,w}(kf))$. 
	\end{enumerate}
\end{Theorem}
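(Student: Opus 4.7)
The plan is to tackle the three parts in sequence, running the standard Orlicz-norm arguments inside the framework set up by Lemma~\ref{lem:aux} (i.e.\ using the two equal representations of $P_{\varphi,w}$ via $(f^*)^0/w$ and via $f^*/w^{f^*}$) so that Young's equality becomes a pointwise tool.

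For (i), I will run the classical Luxemburg-to-Orlicz comparison. Given $\lambda>\|f\|_{\mathcal{M}_{\varphi,w}}$, the definition of the Luxemburg norm together with Proposition~\ref{lem:basiclux} and monotone convergence give $P_{\varphi,w}(f/\lambda)\le 1$; choosing $k=1/\lambda$ in the Orlicz infimum yields $\|f\|^0_{\mathcal{M}_{\varphi,w}}\le 2\lambda$, and letting $\lambda\downarrow\|f\|_{\mathcal{M}_{\varphi,w}}$ gives the upper bound. For the lower bound, fix $k>0$ and set $\lambda_k=\tfrac{1}{k}(1+P_{\varphi,w}(kf))$; since $k\lambda_k\ge 1$, convexity of $P_{\varphi,w}$ (with $P_{\varphi,w}(0)=0$) gives $P_{\varphi,w}(f/\lambda_k)\le P_{\varphi,w}(kf)/(k\lambda_k)\le 1$, so $\|f\|_{\mathcal{M}_{\varphi,w}}\le\lambda_k$. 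Taking the infimum over $k$ finishes (i).

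For (ii), the engine is Young's equality $\varphi(u)+\varphi_*(p(u))=u\,p(u)$. Setting $u=kf^*/w^{f^*}$, multiplying by $w^{f^*}$, and integrating, together with the two representations of $P_{\varphi,w}$ and $\rho_{\varphi_*,w}(p(\cdot))$ supplied by Lemma~\ref{lem:aux}, yields
\begin{equation*}
P_{\varphi,w}(kf)+\rho_{\varphi_*,w}\!\left(p\!\left(\tfrac{k(f^*)^0}{w}\right)\right)=k\int_I f^*\,p\!\left(\tfrac{k(f^*)^0}{w}\right),
\end{equation*}
where I use the identification $\int_I f^*\,p(k(f^*)^0/w)=\int_I f^*\,p(kf^*/w^{f^*})$ (equal a.e.\ off the maximal level intervals, and having equal integrals over each such interval because $p(k(f^*)^0/w)$ is constant there). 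Under the hypothesis $\rho_{\varphi_*,w}(p(k(f^*)^0/w))=1$, this rearranges to $\tfrac{1}{k}(1+P_{\varphi,w}(kf))=\int_I f^*\,p(k(f^*)^0/w)$, giving an upper bound for $\|f\|^0_{\mathcal{M}_{\varphi,w}}$ from the Orlicz-norm definition. For the matching lower bound, I substitute the decreasing function $g:=p(k(f^*)^0/w)$ (which is decreasing because $(f^*)^0/w$ is) into the K\"othe-dual description~(\ref{eq:orlicz}): $g=g^*$, $\rho_{\varphi_*,w}(g)=1$, so $\|f\|^0_{\mathcal{M}_{\varphi,w}}\ge\int_I f^*g$, closing the equality.

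For (iii), I study $\psi(k):=\tfrac{1}{k}(1+P_{\varphi,w}(kf))$ on $(0,1/\bar\theta(f))$. Differentiating under the integral in $P_{\varphi,w}(kf)=\int_I\varphi(k(f^*)^0/w)\,w$ gives $\tfrac{d}{dk}P_{\varphi,w}(kf)=\int_I f^*\,p(k(f^*)^0/w)$, and the Young-equality computation from (ii) produces
\begin{equation*}
k^2\psi'(k)=\rho_{\varphi_*,w}\!\left(p\!\left(\tfrac{k(f^*)^0}{w}\right)\right)-1,
\end{equation*}
interpreted via one-sided derivatives when $p=\varphi'_+$ has jumps. The map $k\mapsto\rho_{\varphi_*,w}(p(k(f^*)^0/w))$ is increasing, so $\psi$ is nonincreasing on $(0,\bar k^*]$ and nondecreasing on $[\bar k^{**},\infty)$, whence the minimizers of $\psi$ coincide with $\bar K(f)=[\bar k^*,\bar k^{**}]$. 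On the interior of $\bar K(f)$ the modular equals $1$ identically and (ii) gives $\|f\|^0=\psi(k)$; at the endpoints I extend by continuity of $\psi$ (finite on the relevant interval by Lemma~\ref{lem:relyoung} and an exhaustion $f_n=f\chi_{\{1/n<|f|\le n\}}$ with monotone convergence). Both implications of the ``if and only if'' then follow: $\|f\|^0=\inf_k\psi(k)$, so the values of $k$ attaining this infimum are exactly the minimizers of $\psi$, i.e.\ the elements of $\bar K(f)$.

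The main obstacle is the non-smooth case in (iii): when $\varphi$ fails to be continuously differentiable the modular $\rho_{\varphi_*,w}(p(k(f^*)^0/w))$ can cross $1$ by a jump, so that $\bar K(f)$ may degenerate to a single point where Young's \emph{equality} is available only in the subdifferential sense. The remedy is to replace classical differentiation by the standard one-sided convexity argument on $\psi$ and to keep the Young equality pointwise via the representation by $w^{f^*}$; once the identifications between $(f^*)^0/w$ and $f^*/w^{f^*}$ and the corresponding integral computations are spelled out, the rest is bookkeeping.
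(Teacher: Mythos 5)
Your proposal is essentially correct and follows the same route as the paper: Lemma~\ref{lem:aux} plus Young's (in)equality via the inverse level function $w^{f^*}$, the truncation $f_n=f\chi_{\{1/n<|f|\le n\}}$ with Lemma~\ref{lem:relyoung} for part (iii), and using part (ii) together with continuity of $T(k)=\tfrac1k(1+P_{\varphi,w}(kf))$ to cover the endpoints of $\bar{K}(f)$. The minor variations are innocuous: for the left inequality in (i) you use convexity of $P_{\varphi,w}$ directly (clean, and avoids the dual-norm chain $\|g\|_{\Lambda_{\varphi_*,w}}\le\|g\|^0_{\Lambda_{\varphi_*,w}}$ that the paper invokes); and in (ii) you obtain the upper bound simply by plugging $k$ into the Orlicz-norm infimum, while the paper re-derives it via Young's inequality against an arbitrary $g$ with $\rho_{\varphi_*,w}(g)\le1$ — both are valid and equivalent.

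For (iii), though, two points need attention. First, the formal identity $k^2\psi'(k)=\rho_{\varphi_*,w}\bigl(p(k(f^*)^0/w)\bigr)-1$ is the right heuristic, but turning it into a proof without smoothness hypotheses requires the finite-difference version of Young's (in)equality; you flag this correctly but don't carry it out, and the exchange of limits behind $\frac{d}{dk}P_{\varphi,w}(kf)=\int_I f^*p(k(f^*)^0/w)$ is exactly what the truncation/monotone-convergence machinery is for. Second, and more substantively, ``extend by continuity of $\psi$ at the endpoints'' silently assumes $\bar{k}^{**}<1/\bar{\theta}(f)$, so that $T$ is actually defined and continuous at $\bar{k}^{**}$. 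Lemma~\ref{lem:NfuncinftyK} only gives $\bar{k}^{**}<\infty$; the further inequality $\bar{k}^{**}<1/\bar{\theta}(f)$ requires a separate argument (Fatou's lemma together with the Young-equality bound $T(k)\le\tfrac1k+\|f\|^0_{\mathcal{M}_{\varphi,w}}$ for $k<\bar{k}^{**}$, then $P_{\varphi,w}(\bar{k}^{**}f)\le\|\bar{k}^{**}f\|^0_{\mathcal{M}_{\varphi,w}}<\infty$ via part (i)). Without this, the endpoint step is not justified. Everything else in your outline matches the paper's proof.
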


\begin{proof}
	
	(i): For $g \in \Lambda_{\varphi_*,w}$, we have $\|g\|_{\Lambda_{\varphi_*,w}} \leq \|g\|_{\Lambda_{\varphi_*,w}}^0 \leq 2\|g\|_{\Lambda_{\varphi_*,w}}$ \cite{WC}. Hence
	\[
	\|f\|_{\mathcal{M}_{\varphi,w}} = \sup\left\{\int_I |fg| : \|g\|_{\Lambda_{\varphi_*,w}}^0 \leq 1\right\} \leq \sup\left\{\int_I |fg| : \|g\|_{\Lambda_{\varphi_*,w}} \leq 1\right\} = \|f\|_{\mathcal{M}_{\varphi,w}}^0.
	\]
	
	By Proposition \ref{lem:basiclux}, we see that $P_{\varphi,w}\left(\frac{f}{\|f\|_{\mathcal{M}_{\varphi,w}}}\right) \leq 1$. Then we have
	\[
	\left\|\frac{f}{\|f\|_{\mathcal{M}_{\varphi,w}}}\right\|_{\mathcal{M}_{\varphi,w}}^0 \leq 1 + P_{\varphi,w}\left(\frac{f}{\|f\|_{\mathcal{M}_{\varphi,w}}}\right) \leq 2.
	\]
	Therefore, $\|f\|_{\mathcal{M}_{\varphi,w}}\leq \|f\|_{\mathcal{M}_{\varphi,w}}^0 \leq 2\|f\|_{\mathcal{M}_{\varphi,w}}$.
	
	(ii): We will use the inverse level functions to show our claim. Denote the inverse level function of $f^*$ by $w^{f^*}$. Assume $k> 0$ such that $\rho_{\varphi_*,w} \left(p\left(\frac{k(f^*)^0}{w}\right)\right) = 1$. By Lemma \ref{lem:aux} we see that
	\begin{equation}\label{eq: modeq1}
		\int_I \varphi_*\left(p\left(\frac{kf^*}{w^{f^*}}\right)\right)w^{f^*} = \rho_{\varphi_*,w}\left(p\left(\frac{k(f^*)^0}{w}\right)\right) = 1.
	\end{equation}
	
	Observe that 
	\[
	\int_I f^* p\left(\frac{k(f^*)^0}{w}\right) = \int_I f^* p\left(\frac{kf^*}{w^{f^*}}\right) = \frac{1}{k} \int_I \frac{kf^*}{w^{f^*}} p\left(\frac{kf^*}{w^{f^*}}\right)w^{f^*}.
	\]  
	From Young's equality \cite[pp. 8, formula (3)]{Chen},
	\begin{equation}\label{rhop}
		\frac{1}{k} \int_I \frac{kf^*}{w^{f^*}} p\left(\frac{kf^*}{w^{f^*}}\right)w^{f^*} = \frac{1}{k} \left( \int_I \varphi\left(\frac{kf^*}{w^{f^*}}\right)w^{f^*} + \int_I \varphi_*\left(p\left(\frac{kf^*}{w^{f^*}}\right)\right)w^{f^*}\right), 
	\end{equation}
	and by Lemma \ref{lem:aux} and (\ref{eq: modeq1}), we obtain 
	\begin{equation}\label{eq:equal}
		\int_I f^* p\left(\frac{k(f^*)^0}{w}\right) = \frac{1}{k}( 1 + P_{\varphi,w}(kf)).
	\end{equation} 
	From the definition of the Orlicz norm for $\mathcal{M}_{\varphi,w}$ and from the fact that $\rho_{\varphi_*,w} \left(p\left(\frac{k(f^*)^0}{w}\right)\right) = 1$, we have 
	\[
	\|f\|_{\mathcal{M}_{\varphi,w}}^0 \geq \int_I f^* p\left(\frac{k(f^*)^0}{w}\right).
	\]
	
	To show the reverse inequality, since $\mathcal{M}_{\varphi,w}$ is r.i., we have
	\[
	\|f\|_{\mathcal{M}_{\varphi,w}}^0 = \sup\left\{\int_I f^*g^* : \rho_{\varphi_*, w}(g) \leq 1\right\} =  \frac{1}{k} \sup\left\{\int_I k f^*g^* : \rho_{\varphi_*, w}(g) \leq 1\right\}.
	\]
	In addition, by Young's inequality and by Lemma \ref{lem:aux}, we see that
	\[
	\int_I k f^*g^* = \int_I \frac{kf^*g^*}{w^{f^*}}w^{f^*} \leq \int_I \varphi\left(\frac{kf^*}{w^{f^*}}\right)w^{f^*} + \int_I \varphi_*(g^*)w^{f^*} \leq P_{\varphi,w}(kf) + \rho_{\varphi_*,w}(g).
	\]
	Hence for $g \in \Lambda_{\varphi_*,w}$ such that $\rho_{\varphi_*,w}(g) \leq 1$, by (\ref{eq:equal}), 
	\[
	\|f\|_{\mathcal{M}_{\varphi,w}}^0 = \frac{1}{k} \sup\left\{\int_I k f^*g^* : \rho_{\varphi_*, w}(g) \leq 1\right\} \leq \frac{1}{k}(P_{\varphi,w}(kf) +1)) = \int_I f^* p\left(\frac{k(f^*)^0}{w}\right),
	\]
	and this proves our claim.
	
	(iii): For $f \in \mathcal{M}_{\varphi,w}^0$, define a function $T(k) = \frac{1}{k}(1 + P_{\varphi,w}(kf))$. Let $\bar{\theta} = \bar{\theta}(f)$. The function $T(k)$ is continuous on the interval $(0, 1/\bar{\theta})$. We first want to show that $\bar{k}^{**} < 1/\bar{\theta}$. Notice that for every $k < \bar{k}^{**}$, $\rho_{\varphi_*,w}\left(p\left(\frac{k(f^*)^0}{w}\right)\right) \leq 1$. In view of (\ref{rhop}), 
	 
	\[
	\int_I kf^* p\left(\frac{k(f^*)^0}{w}\right) =  P_{\varphi,w}(kf) +  \rho_{\varphi_*,w}\left(p\left(\frac{k(f^*)^0}{w}\right)\right).
	\]
	Hence
	\[
	T(k) = \frac{1}{k}(1 + P_{\varphi,w}(kf)) \leq \frac{1}{k}\left(1 + \int_I kf^* p\left(\frac{k(f^*)^0}{w}\right)\right) \leq \frac{1}{k}(1 + \|kf\|^0_{\mathcal{M}_{\varphi,w}}),   
	\]
	and this shows that $T(k) \leq \frac{1}{k} + \|f\|^0_{\mathcal{M}_{\varphi,w}}$ for every $k < \bar{k}^{**}$. Let $(k_n)_{n=1}^{\infty}$ be a sequence of real numbers such that $k_n \uparrow \bar{k}^{**}$. Then by Fatou's lemma, we obtain
	\[
	T(\bar{k}^{**}) \leq \liminf_n\frac{1}{k_n}(1+ P_{\varphi,w}(k_nf)) \leq  \lim_{n\rightarrow \infty} \frac{1}{k_n} + \|f\|^0_{\mathcal{M}_{\varphi,w}} = 1/\bar{k}^{**} + \|f\|^0_{\mathcal{M}_{\varphi,w}}.
	\]
	Since $\varphi$ is an $N$-function, $\bar{k}^{**} < \infty$ by Lemma \ref{lem:NfuncinftyK}. Hence by the statement (i),
	\[
	P_{\varphi,w}(\bar{k}^{**}f) \leq \|\bar{k}^{**}f\|_{\mathcal{M}_{\varphi,w}} \leq \|\bar{k}^{**}f\|^0_{\mathcal{M}_{\varphi,w}} < \infty.
	\]
	Thus, we get $\bar{k}^{**} < 1/\bar{\theta}$.
	
	Now, let $k_1, k_2 \in (0, 1/\bar{\theta})$ such that $k_1 > k_2$. Define $f_n = f\chi_{\{\frac{1}{n} < |f| \leq n\}}$. Then by Young's inequality, for every $n\in\mathbb{N}$,
	\small
	\begin{flalign*}
		\int_I k_1f_n^* p\left(\frac{k_2 (f_n^*)^0}{w}\right) = \int_I \frac{k_1f_n^*}{w^{f_n^*}}p\left(\frac{k_2 f_n^*}{w^{f_n^*}}\right)w^{f_n^*} &\leq \int_I \varphi\left(\frac{k_1f_n^*}{w^{f_n^*}}\right)w^{f_n^*} + \int_I \varphi_*\left(p\left(\frac{k_2 f_n^*}{w^{f_n^*}}\right)\right)w^{f_n^*}\\
		&= P_{\varphi,w}(k_1f_n) + \rho_{\varphi_*,w}\left(p\left(\frac{k_2 (f_n^*)^0}{w}\right)\right).
	\end{flalign*}
	
	\normalsize
	
	Since $\rho_{\varphi_*,w}\left(p\left(\frac{k_2 (f_n^*)^0}{w}\right)\right) < \infty$ by Lemma \ref{lem:relyoung}, 
	\begin{equation}\label{eq:Young1}
		P_{\varphi,w}(k_1f_n) \geq \int_I k_1f_n^* p\left(\frac{k_2 (f_n^*)^0}{w}\right) -  \rho_{\varphi_*,w}\left(p\left(\frac{k_2 (f_n^*)^0}{w}\right)\right).
	\end{equation}
	In addition by Young's equality and Lemma \ref{lem:aux}, we see that
	\small
	\begin{flalign*}
		\int_I k_2f_n^* p\left(\frac{k_2 (f_n^*)^0}{w}\right) = \int_I \frac{k_2f_n^*}{w^{f_n^*}}p\left(\frac{k_2 f_n^*}{w^{f_n^*}}\right)w^{f_n^*} &= \int_I \varphi\left(\frac{k_2f_n^*}{w^{f_n^*}}\right)w^{f_n^*} + \int_I \varphi_*\left(p\left(\frac{k_2 f_n^*}{w^{f_n^*}}\right)\right)w^{f_n^*}\\
		&= P_{\varphi,w}(k_2f_n) + \rho_{\varphi_*,w}\left(p\left(\frac{k_2 (f_n^*)^0}{w}\right)\right).
	\end{flalign*}
	\normalsize
	Then we obtain 
	\begin{equation}\label{eq:Young2}
		P_{\varphi,w}(k_2f_n) = \int_I k_2f_n^* p\left(\frac{k_2 (f_n^*)^0}{w}\right) -  \rho_{\varphi_*,w}\left(p\left(\frac{k_2 (f_n^*)^0}{w}\right)\right).
	\end{equation}
	Observe that
	\[
	\frac{1}{k_1}(1 + P_{\varphi,w}(k_1 f_n)) - \frac{1}{k_2}(1 + P_{\varphi,w}(k_2 f_n)) = \frac{k_2 - k_1}{k_1 k_2} + \frac{1}{k_1}P_{\varphi,w}(k_1 f_n) - \frac{1}{k_2}P_{\varphi,w}(k_2 f_n) .
	\]
	By adding and subtracting by $\frac{1}{k_1}P_{\varphi,w}(k_2 f_n)$,  
	\begin{multline*}
		\frac{k_2 - k_1}{k_1 k_2} + \frac{1}{k_1}P_{\varphi,w}(k_1 f_n) - \frac{1}{k_2}P_{\varphi,w}(k_2 f_n) \\ = \frac{k_1 - k_2}{k_1 k_2}\left( -1 + \frac{k_2}{k_1 - k_2}\left(P_{\varphi,w}(k_1 f_n) - P_{\varphi,w}(k_2 f_n)\right)- P_{\varphi,w}(k_2 f_n)\right).
	\end{multline*}
	From (\ref{eq:Young1}) and (\ref{eq:Young2}), 
	\begin{multline*}
		\frac{k_1 - k_2}{k_1 k_2}\left( -1 + \frac{k_2}{k_1 - k_2}(P_{\varphi,w}(k_1 f_n) - P_{\varphi,w}(k_2 f_n))- P_{\varphi,w}(k_2 f_n)\right) \\ \geq \frac{k_1 - k_2}{k_1 k_2}\left(-1 + \frac{k_2}{k_1 - k_2}\left(\int_I (k_1 - k_2) f_n p\left(\frac{k_2 (f_n^*)^0}{w}\right)\right) - P_{\varphi,w}(k_2 f_n)\right) \\ = \frac{k_1 - k_2}{k_1 k_2}\left(\rho_{\varphi_*,w}\left(p\left(\frac{k_2 (f_n^*)^0}{w}\right)\right) - 1\right).
	\end{multline*}
	Hence, we have
	\[
	\frac{1}{k_1}(1 + P_{\varphi,w}(k_1 f_n)) - \frac{1}{k_2}(1 + P_{\varphi,w}(k_2 f_n))  \geq \frac{k_1 - k_2}{k_1 k_2}\left(\rho_{\varphi_*,w}\left(p\left(\frac{k_2 (f_n^*)^0}{w}\right)\right) - 1\right).
	\]
	
 	From the fact that $(f_n^*)^0 \uparrow (f^*)^0$ a.e. by Proposition \ref{prop:Halmono} and by the Monotone Convergence Theorem, notice that for $k_1 > k_2$,
	\begin{equation}\label{eq:increasing}
		T(k_1) - T(k_2) \geq \frac{k_1 - k_2}{k_1 k_2}\left(\rho_{\varphi_*,w}\left(p\left(\frac{k_2 (f^*)^0}{w}\right)\right) - 1\right).
	\end{equation}
	By the same argument, if $k_1 < k_2$, we can also show that 
	\begin{equation}\label{eq:decreasing}
		T(k_1) - T(k_2) \geq \frac{k_1 - k_2}{k_1 k_2}\left(\rho_{\varphi_*,w}\left(p\left(\frac{k_1 (f^*)^0}{w}\right)\right) - 1\right).
	\end{equation}
	
	For $0 < k_1 < k_2 < \bar{k}^{*}$, $\frac{k_1 - k_2}{k_1k_2} < 0$ and $\rho_{\varphi_*,w}\left(p\left(\frac{k_1 (f^*)^0}{w}\right)\right) < 1$. So from (\ref{eq:decreasing}), $T(k)$ is decreasing on $(0, \bar{k}^{*})$. Moreover, since $\frac{k_1 - k_2}{k_1k_2} >0$ and $\rho_{\varphi_*,w}\left(p\left(\frac{k_2 (f^*)^0}{w}\right)\right) > 1$ for $\bar{k}^{**} < k_1 < k_2 < 1/\bar{\theta}$, the function $T(k)$ is increasing on $(\bar{k}^{**}, 1/\bar{\theta})$ by (\ref{eq:increasing}). Notice from the definitions of $\bar{k}^{*}$ and $\bar{k}^{**}$ that $\rho_{\varphi_*,w}\left(p\left(\frac{k (f^*)^0}{w}\right)\right) = 1$ for any $k \in (\bar{k}^{*},\bar{k}^{**})$. Hence, 
	\[
	\|f\|^0_{\mathcal{M}_{\varphi,w}} = \inf_{l >0} T(l)= \frac{1}{k}(1 + P_{\varphi,w}(kf))
	\]
	by Theorem \ref{th:OrliczM}.(ii). Observe that $T(l) > T(\bar{k}^*)$ for every $l < \bar{k}^*$ and $T(l) > T(\bar{k}^{**})$ for every $l > \bar{k}^{**}$. Since $T(l)$ is continuous on the interval $(0, 1/\bar{\theta})$ and $\bar{k}^{*}, \bar{k}^{**} \in (0, 1/\bar{\theta})$, $\|f\|^0_{\mathcal{M}_{\varphi,w}} = \inf_{l >0} T(l) = T(\bar{k}^*) = T(\bar{k}^{**})$. Hence, if $k \in \bar{K}(f) = [\bar{k}^*, \bar{k}^{**}]$, then $\|f^0\|_{\mathcal{M}_{\varphi,w}} = \frac{1}{k}(1 + P_{\varphi,w}(kf))$.
	
	To show the converse, define $T(k)$ and $\bar{\theta}$ as before. Recall the fact that $T(k)$ is continuous on the interval $(0, 1/\bar{\theta})$. Let $k_0 > 0$ be such that $\|f\|^0_{\mathcal{M}_{\varphi,w}}  =  T(k_0) = \inf_{k > 0}T(k)$. If $k_0 \in (0, \bar{k}^*)$, $T(k_0) > T(\bar{k}^*)$ because $T(k)$ is decreasing on the interval $(0, \bar{k}^*)$ by (\ref{eq:decreasing}). Also, $T(k_0) >  T(\bar{k}^{**})$ for $k_0 \in (\bar{k}^{**}, 1/\bar{\theta})$  since $T(k)$ is increasing on the interval $(\bar{k}^{**}, 1/\bar{\theta})$ by (\ref{eq:increasing}). Hence $T(k_0) = \|f\|^0_{\mathcal{M}_{\varphi,w}}$ only when $k_0 \in \bar{K}(f)$.
\end{proof}

\section{Applications to Orlicz-Lorentz spaces}
The characterization of separable $\mathcal{M}_{\varphi,w}$ spaces in Section 4 allows us to explore various properties in Orlicz-Lorentz spaces. First, we can characterize the reflexivity of Orlicz-Lorentz spaces and their K\"othe duals.
	
	\begin{Theorem}\label{th:reflexiveOL}
		The following statements are equivalent:
		\begin{enumerate}[\rm(i)]
			\item An Orlicz-Lorentz space $\Lambda_{\varphi,w}$ is reflexive.
			\item $\varphi$ and its complementary function $\varphi_*$ satisfy the appropriate $\Delta_2$-condition.  
		\end{enumerate}
		The analogous statement for $\lambda_{\varphi,w}$ also holds.
	\end{Theorem}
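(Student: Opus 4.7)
The plan is to reduce reflexivity to order-continuity of $\Lambda_{\varphi,w}$ together with order-continuity of its K\"othe dual, and then to read off each of these via the $\Delta_2$-type conditions already available in the paper. Recall the classical fact (see, e.g., \cite[Corollary 1.4.4 and Section 1.5]{BS}) that a Banach function lattice $X$ with the Fatou property is reflexive if and only if both $X$ and $X'$ are order-continuous; equivalently, neither $X$ nor $X'$ contains an isomorphic copy of $c_0$. Since $\Lambda_{\varphi,w}$ has the Fatou property \cite[Theorem 4.7]{KR} and $(\Lambda_{\varphi,w})' \simeq \mathcal{M}_{\varphi_*,w}^0$ by Theorem \ref{th:KLR}, this reduces the problem to characterizing when $\Lambda_{\varphi,w}$ and $\mathcal{M}_{\varphi_*,w}^0$ are simultaneously order-continuous.

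For the forward direction (i) $\Rightarrow$ (ii), assume $\Lambda_{\varphi,w}$ is reflexive. Then $\Lambda_{\varphi,w}$ is order-continuous, which is equivalent to $\varphi$ satisfying the appropriate $\Delta_2$-condition by the classical characterization of order-continuity of Orlicz-Lorentz spaces \cite{K}. Moreover, $(\Lambda_{\varphi,w})' \simeq \mathcal{M}_{\varphi_*,w}^0$ is also order-continuous; since the Luxemburg and Orlicz norms on $\mathcal{M}_{\varphi_*,w}$ are equivalent, this is the same as order-continuity of $\mathcal{M}_{\varphi_*,w}$, and Corollary \ref{cor:sep} (or equivalently the equivalence (i) $\Leftrightarrow$ (vi) of Theorem \ref{th:copy}) then forces $\varphi_*$ to satisfy the appropriate $\Delta_2$-condition.

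For the reverse direction (ii) $\Rightarrow$ (i), if $\varphi$ satisfies the appropriate $\Delta_2$-condition, the same classical result gives order-continuity of $\Lambda_{\varphi,w}$, and if $\varphi_*$ satisfies it as well, then Corollary \ref{cor:sep} yields order-continuity (equivalently, no copy of $c_0$, by Theorem \ref{th:copy}) of $\mathcal{M}_{\varphi_*,w}$ and hence of $\mathcal{M}_{\varphi_*,w}^0 \simeq (\Lambda_{\varphi,w})'$. Invoking again the reflexivity criterion for Banach function lattices with the Fatou property completes the proof.

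The sequence-space case $\lambda_{\varphi,w}$ is entirely parallel, now using the sequence versions of Theorem \ref{th:KLR}, Theorem \ref{th:copy}, and Corollary \ref{cor:sep}, with the appropriate $\Delta_2$-condition replaced throughout by $\Delta_2^0$. I do not anticipate a genuinely hard step here: the only mild subtlety is to make sure one is citing the version of the reflexivity criterion valid for Banach function lattices (not merely Banach lattices), and to bridge between order-continuity of $\mathcal{M}_{\varphi_*,w}$ and $\mathcal{M}_{\varphi_*,w}^0$ via the norm equivalence, but both points are standard and already explicitly recorded in the paper.
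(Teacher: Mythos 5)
Your proof is correct and follows essentially the same route as the paper: reduce reflexivity to order-continuity of $\Lambda_{\varphi,w}$ and of its K\"othe dual via \cite[Corollary 1.4.4]{BS}, identify $(\Lambda_{\varphi,w})'$ with $\mathcal{M}_{\varphi_*,w}^0$ by Theorem \ref{th:KLR}, and invoke \cite[Theorem 2.4]{K} together with Theorem \ref{th:copy} (equivalently Corollary \ref{cor:sep}) to translate each order-continuity into the corresponding $\Delta_2$-condition. The only cosmetic difference is that you explicitly flag the Fatou property hypothesis in the reflexivity criterion, which the paper uses implicitly.
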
  
	
	\begin{proof}
		An Orlicz function $\varphi$ satisfies the appropriate $\Delta_2$-condition if and only if $\Lambda_{\varphi, w}$ is order-continuous by \cite[Theorem 2.4]{K}.  Also the complementary function $\varphi_*$ satisfies the appropriate $\Delta_2$-condition if and only if $(\Lambda_{\varphi, w})' \simeq\mathcal{M}_{\varphi_*,w}^0$ is order-continuous by Theorem \ref{th:copy}, Theorem \ref{th:KLR}, and the equivalence between the Orlicz norm and the Luxemburg norm. From the fact that a Banach function lattice $X$ is reflexive if and only if both $X$ and $X'$ are order-continuous \cite[Corollary 1.4.4]{BS}, we obtain the desired result. 
	\end{proof}
		
	\begin{Theorem}
		The following statements are equivalent:
		\begin{enumerate}[\rm(i)]
			\item The space $\mathcal{M}_{\varphi, w}$ is reflexive.
			\item $\varphi$ and its complementary function $\varphi_*$ satisfy the appropriate $\Delta_2$-condition.
		\end{enumerate}
		The analogous statement for $\mathfrak{m}_{\varphi,w}$ also holds.
	\end{Theorem}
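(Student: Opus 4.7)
The plan is to mirror the proof of Theorem \ref{th:reflexiveOL} by using the standard criterion that a Banach function lattice $X$ is reflexive if and only if both $X$ and its Köthe dual $X'$ are order-continuous \cite[Corollary 1.4.4]{BS}. Since $\mathcal{M}_{\varphi,w}$ has the Fatou property \cite[Theorem 4.7]{KR}, this criterion applies directly and reduces the problem to showing order-continuity of $\mathcal{M}_{\varphi,w}$ and of $(\mathcal{M}_{\varphi,w})'$ separately.

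The first step is to identify the Köthe dual of $\mathcal{M}_{\varphi,w}$. By Theorem \ref{th:KLR}, we have $(\Lambda_{\varphi_*,w}^0)' \simeq \mathcal{M}_{\varphi,w}$ (after replacing $\varphi$ with $\varphi_*$ and using $\varphi = \varphi_{**}$ for Orlicz functions). Since $\Lambda_{\varphi_*,w}^0$ has the Fatou property, the Lorentz-Luxemburg theorem gives $(\mathcal{M}_{\varphi,w})' \simeq (\Lambda_{\varphi_*,w}^0)'' \simeq \Lambda_{\varphi_*,w}^0$. Thus reflexivity of $\mathcal{M}_{\varphi,w}$ is equivalent to order-continuity of both $\mathcal{M}_{\varphi,w}$ and $\Lambda_{\varphi_*,w}^0$.

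Next I would invoke the two characterizations of order-continuity already available in the paper. By Theorem \ref{th:copy}, the space $\mathcal{M}_{\varphi,w}$ is order-continuous (equivalently, separable, equivalently a $KB$-space) if and only if $\varphi$ satisfies the appropriate $\Delta_2$-condition. On the other hand, by \cite[Theorem 2.4]{K}, the Orlicz-Lorentz space $\Lambda_{\varphi_*,w}$ (with the Luxemburg norm) is order-continuous if and only if $\varphi_*$ satisfies the appropriate $\Delta_2$-condition, and since the Luxemburg and Orlicz norms on Orlicz-Lorentz spaces are equivalent, the same equivalence holds for $\Lambda_{\varphi_*,w}^0$. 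Combining these two characterizations yields $(\mathrm{i})\Longleftrightarrow(\mathrm{ii})$.

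The main technical point — and the only nontrivial step beyond quoting earlier results — is the identification $(\mathcal{M}_{\varphi,w})' \simeq \Lambda_{\varphi_*,w}^0$, which requires applying the Köthe biduality (Fatou) to Theorem \ref{th:KLR} in the correct direction. The sequence case follows by the same argument, substituting $\mathfrak{m}_{\varphi,w}$, $\lambda_{\varphi_*,w}^0$, and the sequence-space $\Delta_2^0$-condition, and using the sequence-space versions of Theorem \ref{th:KLR} and Theorem \ref{th:copy}.
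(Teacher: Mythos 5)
Your proposal is correct and follows essentially the same route as the paper's proof: both use the criterion that a Banach function lattice with the Fatou property is reflexive iff it and its K\"othe dual are order-continuous, identify the K\"othe dual via Theorem \ref{th:KLR} together with K\"othe biduality, and then invoke Theorem \ref{th:copy} and \cite[Theorem 2.4]{K} for the two $\Delta_2$-characterizations. The only cosmetic difference is that you compute $(\mathcal{M}_{\varphi,w})' \simeq \Lambda_{\varphi_*,w}^0$ directly, whereas the paper words its argument with $\mathcal{M}_{\varphi_*,w}$ and implicitly relabels $\varphi\leftrightarrow\varphi_*$ using $\varphi_{**}=\varphi$.
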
 
	\begin{proof}
	An Orlicz function $\varphi$ satisfies the appropriate $\Delta_2$-condition if and only if $(\mathcal{M}_{\varphi_*,w})' \simeq \Lambda_{\varphi, w}^0$ is order-continuous by \cite[Theorem 2.4]{K}, Theorem \ref{th:KLR}, and the equivalence between the Orlicz norm and the Luxemburg norm. Also the complementary function $\varphi_*$ satisfies the appropriate $\Delta_2$-condition if and only if $\mathcal{M}_{\varphi_*,w}$ is order-continuous by Theorem \ref{th:copy}. From the fact that a Banach function lattice $X$ is reflexive if and only if both $X$ and $X'$ are order-continuous \cite[Corollary 1.4.4]{BS}, we obtain the desired result.
	\end{proof}
	
\subsection{M-embedded Orlicz-Lorentz spaces} 

A closed subspace $Y$ of $X$ is said to be an {\it M-ideal} if there exists a projection $P:X^* \rightarrow X^*$ such that the $P(X^*) = Y^{\perp}$ and $\|x^*\| = \|Px^*\| + \|(I-P)x^*\|$. A Banach space $X$ is said to be {\it $M$-embedded} if $X$ is an $M$-ideal in its bidual $X^{**}$. Assuming that $X$ is an $M$-ideal in its bidual $X^{**}$, if $Y$ is a separable closed subspace of $X$, then $Y^*$ is separable \cite[Theorem 2.6]{Lima}.

The dual $X^*$ of a Banach function lattice $X$ is isometrically isomorphic to the K\"othe dual space $X'$ if and only if $X$ is order-continuous \cite[Corollary 1.4.3]{BS}. For the case of Orlicz spaces equipped with the Luxemburg norm, it is shown that the order-continuous subspace $(L_\varphi)_a$ is $M$-embedded if $\varphi_*$ satisfies the appropriate $\Delta_2$-condition \cite{HWW}. As a matter of fact, the analogous statement is also true for $(\Lambda_{\varphi,w})_a$ and $(\lambda_{\varphi,w})_a$. 
	
Now, we are ready to provide sufficient conditions for M-embeddedness of $(\Lambda_{\varphi, w})_a$.   

\begin{Theorem}\label{MembedF}
\begin{enumerate}[\rm(i)]
	\item If both $\varphi$ and $\varphi_*$ satisfy the appropriate $\Delta_2$-condition, then the order-continuous subspace $(\Lambda_{\varphi,w})_a$ is an $M$-ideal in its bidual $((\Lambda_{\varphi,w})_a)^{**} \simeq \Lambda_{\varphi,w}$.
	\item If neither $\varphi$ nor $\varphi_*$ satisfies the appropriate $\Delta_2$-condition, then the order-continuous subspace $(\Lambda_{\varphi,w})_a$ is not an $M$-ideal in its bidual.
	\item If $\varphi$ does not satisfy the appropriate $\Delta_2$-condition while $\varphi_*$ does, then the order-continuous subspace $(\Lambda_{\varphi,w})_a$ is an $M$-ideal in its bidual $((\Lambda_{\varphi,w})_a)^{**} \simeq \Lambda_{\varphi,w}$.
\end{enumerate}
\end{Theorem}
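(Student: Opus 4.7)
The plan is to handle the three stated cases by first computing the bidual $((\Lambda_{\varphi,w})_a)^{**}$ and then either exhibiting the $M$-ideal structure via the $3$-ball property or ruling it out via a separability obstruction.

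Case (i) follows at once from Theorem \ref{th:reflexiveOL}: when both $\varphi$ and $\varphi_*$ satisfy the appropriate $\Delta_2$-condition, $\Lambda_{\varphi,w}$ is reflexive, so $(\Lambda_{\varphi,w})_a = \Lambda_{\varphi,w} = \Lambda_{\varphi,w}^{**}$ and every Banach space is trivially an $M$-ideal in itself. For case (iii), since $\lim_{t \to 0^+}\phi_{\Lambda_{\varphi,w}}(t) = 0$ the order-continuous subspace $(\Lambda_{\varphi,w})_a = (\Lambda_{\varphi,w})_b$ contains the simple functions, so $((\Lambda_{\varphi,w})_a)^* \simeq \Lambda_{\varphi,w}' \simeq \mathcal{M}_{\varphi_*,w}^0$ by Theorem \ref{th:KLR}. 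The hypothesis $\varphi_* \in \Delta_2$ combined with Theorem \ref{th:copy} makes $\mathcal{M}_{\varphi_*,w}^0$ order-continuous, and Lemma \ref{lem:Mfund} applied to $\varphi_*$ ensures it contains the simple functions; hence $((\Lambda_{\varphi,w})_a)^{**} \simeq (\mathcal{M}_{\varphi_*,w}^0)' \simeq \Lambda_{\varphi,w}$, the last identification using the Fatou property of $\Lambda_{\varphi,w}$. The $M$-ideal property then reduces to the classical $3$-ball property: given $y_1, y_2, y_3 \in B_{(\Lambda_{\varphi,w})_a}$, $f \in B_{\Lambda_{\varphi,w}}$, and $\varepsilon > 0$, find $y \in (\Lambda_{\varphi,w})_a$ with $\|f + y_i - y\|_{\varphi,w} \leq 1 + \varepsilon$ for $i = 1, 2, 3$. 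A natural candidate is a truncation $y = f\chi_{\{1/N \leq |f| \leq N\}}$ for $N$ large, which is bounded with support of finite measure and hence lies in $(\Lambda_{\varphi,w})_a$; the residual $f - y$ is concentrated on $\{|f| > N\} \cup \{|f| < 1/N\}$. Using the orthogonal subadditivity of $\rho_{\varphi,w}$ at the level of decreasing rearrangements together with the rearrangement-invariant estimates provided by Lemma \ref{lem:hardy}, one controls $\rho_{\varphi,w}((f + y_i - y)/(1+\varepsilon))$, following the strategy of \cite{KLT2}.

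Case (ii) is handled by contradiction using the separable-dual transfer theorem \cite[Theorem 2.6]{Lima}: if $X$ is $M$-embedded and $Z \subseteq X$ is a separable closed subspace, then $Z^*$ is separable. Applying this with $Z = X = (\Lambda_{\varphi,w})_a$, which is separable under the standing measure assumptions, $M$-embeddedness would force $((\Lambda_{\varphi,w})_a)^* \simeq \mathcal{M}_{\varphi_*,w}^0$ to be separable as well. But $\varphi_*$ fails the appropriate $\Delta_2$-condition, so Theorem \ref{th:copy} (with $\varphi_*$ in place of $\varphi$) exhibits an isometric copy of $\ell^\infty$ inside $\mathcal{M}_{\varphi_*,w}^0$, contradicting separability. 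The main obstacle is the $3$-ball estimate in case (iii): the modular $\rho_{\varphi,w}$ acts on $f^*$ rather than pointwise, so orthogonal subadditivity must be argued through rearrangements, and the truncation parameter $N$ must be tuned so that the residual $f - y$ contributes negligibly in the Luxemburg norm (a control that ultimately stems from $\varphi_* \in \Delta_2$ via order-continuity of the dual) while the three perturbations $y_i$ can all be accommodated by a single choice of $y$.
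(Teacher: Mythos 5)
Your treatment of cases (i) and (ii) matches the paper exactly: (i) is the same reflexivity argument via Theorem \ref{th:reflexiveOL}, and (ii) is the same separability obstruction — $(\Lambda_{\varphi,w})_a$ is separable, $M$-embeddedness would force its dual $\mathcal{M}_{\varphi_*,w}^0$ to be separable by \cite[Theorem 2.6]{Lima}, but $\varphi_*\notin\Delta_2$ rules this out by Theorem \ref{th:copy}. Your bidual identification in case (iii) is also essentially the paper's (order-continuity of $\mathcal{M}_{\varphi_*,w}^0$ via $\varphi_*\in\Delta_2$, then K\"othe double duality and the Fatou property).

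The gap is in the $M$-ideal part of case (iii). The paper disposes of this by citing \cite[Theorem 3.10]{KLT3}, where the $3$-ball property was established; you instead try to reconstruct that argument, and the mechanism you describe does not work. You write that the truncation parameter $N$ should be chosen ``so that the residual $f-y$ contributes negligibly in the Luxemburg norm,'' attributing this control to $\varphi_*\in\Delta_2$. That claim is false: if $f\in\Lambda_{\varphi,w}\setminus(\Lambda_{\varphi,w})_a$ (which is exactly the nontrivial case when $\varphi\notin\Delta_2$), then $\|f-f\chi_{\{1/N\leq|f|\leq N\}}\|_{\varphi,w}$ does \emph{not} tend to $0$ as $N\to\infty$ — if it did, $f$ would be order-continuous. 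Order-continuity of the dual $\mathcal{M}_{\varphi_*,w}^0$ gives you no control over norms of tails of $f$ inside $\Lambda_{\varphi,w}$ itself. The $3$-ball property does not ask that $\|f-y\|$ be small; it asks for the joint estimate $\|(f-y)+y_i\|\leq 1+\varepsilon$, which is achieved by an almost-disjointness/splitting argument exploiting the interaction between the tail of $f$ and the $y_i$, not by making the tail vanish. This is exactly the delicate step proved in \cite[Theorem 3.10]{KLT3}, and your sketch neither reproduces it nor cites it, so as written case (iii) is incomplete and its heuristic is misleading.
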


\begin{proof}
	(i): In view of Theorem \ref{th:reflexiveOL}, $(\Lambda_{\varphi,w})_a = \Lambda_{\varphi,w} \simeq (\Lambda_{\varphi,w})_a^{**}$. Hence, the statement holds trivially.   
	
	(ii): Suppose that both $\varphi$ and $\varphi_*$ does not satisfy the appropriate $\Delta_2$-condition and assume to the contrary that $(\Lambda_{\varphi,w})_a$ is an $M$-ideal in its bidual $((\Lambda_{\varphi,w})_a)^{**}$.  In view of \cite[Theorem 2.6]{Lima}, the dual of $(\Lambda_{\varphi,w})_a$ has to be separable. From the fact that $X^*$ is isometrically isomorphic to $X'$ for order-continuous Banach function spaces, we have $((\Lambda_{\varphi,w})_a)^* \simeq (\Lambda_{\varphi,w})' \simeq \mathcal{M}_{\varphi_*,w}^0$ by Theorem \ref{th:KLR}. However, the space $\mathcal{M}_{\varphi_*,w}^0$ is not separable by Corollary \ref{cor:sep}, which is a contradiction.
	
	(iii): It was shown in \cite[Theorem 3.10]{KLT3} that the order-continuous subspace $(\Lambda_{\varphi,w})_a$ is an $M$-ideal in $\Lambda_{\varphi,w}$. Since $ ((\Lambda_{\varphi,w})_a)^{**} \simeq (\mathcal{M}_{\varphi_*,w}^0)^* \simeq \Lambda_{\varphi,w}$ by \cite[Corollary 1.4.3]{BS} and Theorem \ref{th:KLR}, the claim holds.  
\end{proof}

The sequence analogue of Theorem \ref{MembedF} can be obtained by a similar argument. Hence we only provide the statement here. For the proof of \ref{MembedS}.(iii), we refer to \cite[Theorem 3.11]{KLT3}. 

\begin{Theorem}\label{MembedS}
	\begin{enumerate}[\rm(i)]
		\item If both $\varphi$ and $\varphi_*$ satisfy the $\Delta_2^0$-condition, then the order-continuous subspace $(\lambda_{\varphi,w})_a$ is an $M$-ideal in its bidual $((\lambda_{\varphi,w})_a)^{**} \simeq \lambda_{\varphi,w}$.
		\item If both $\varphi$ and $\varphi_*$ do not satisfy the $\Delta_2^0$-condition, then the order-continuous subspace $(\lambda_{\varphi,w})_a$ is not an $M$-ideal in its bidual $((\lambda_{\varphi,w})_a)^{**}$.
		\item If $\varphi$ does not satisfy the $\Delta_2^0$-condition while $\varphi_*$ does, then the order-continuous subspace $(\lambda_{\varphi,w})_a$ is an $M$-ideal in its bidual $((\lambda_{\varphi,w})_a)^{**} \simeq \lambda_{\varphi,w}$.
	\end{enumerate}
\end{Theorem}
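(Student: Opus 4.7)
The plan is to mirror the proof of the function analogue, Theorem \ref{MembedF}, almost line-by-line, checking that each supporting result (Theorem \ref{th:KLR}, Corollary \ref{cor:sep}, Theorem \ref{th:reflexiveOL}, and \cite[Corollary 1.4.3]{BS}) has been stated for sequence spaces as well, which the excerpt confirms.

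For part (i), I would invoke the sequence analogue of Theorem \ref{th:reflexiveOL}, which states that $\lambda_{\varphi,w}$ is reflexive if and only if both $\varphi$ and $\varphi_*$ satisfy the $\Delta_2^0$-condition. Under this hypothesis, $(\lambda_{\varphi,w})_a = \lambda_{\varphi,w} \simeq ((\lambda_{\varphi,w})_a)^{**}$, so the space is trivially an M-ideal in its bidual.

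For part (ii), I would argue by contradiction exactly as in the function case. First note that $(\lambda_{\varphi,w})_a$ is separable: it is order-continuous by definition, and since the counting measure is separable, separability of the order-continuous part follows from \cite[Theorem 1.5.5]{BS}. If $(\lambda_{\varphi,w})_a$ were an M-ideal in its bidual, then by \cite[Theorem 2.6]{Lima} its dual would also be separable. Combining order-continuity with \cite[Corollary 1.4.3]{BS} gives $((\lambda_{\varphi,w})_a)^* \simeq (\lambda_{\varphi,w})'$, and by the sequence version of Theorem \ref{th:KLR} this equals $\mathfrak{m}_{\varphi_*,w}^0$. However, Corollary \ref{cor:sep} tells us that $\mathfrak{m}_{\varphi_*,w}^0$ (which is equivalent in norm to $\mathfrak{m}_{\varphi_*,w}$) is separable precisely when $\varphi_*$ satisfies the $\Delta_2^0$-condition, giving the required contradiction.

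For part (iii), as the excerpt indicates, the M-ideal structure itself is already established in \cite[Theorem 3.11]{KLT3}. All that remains is to identify the bidual: by \cite[Corollary 1.4.3]{BS} applied to the order-continuous space $\mathfrak{m}_{\varphi_*,w}^0$ (which is order-continuous because $\varphi_*$ satisfies $\Delta_2^0$, by Theorem \ref{th:copy}) together with Theorem \ref{th:KLR}, one has
\[
((\lambda_{\varphi,w})_a)^{**} \simeq (\mathfrak{m}_{\varphi_*,w}^0)^* \simeq (\mathfrak{m}_{\varphi_*,w}^0)' \simeq \lambda_{\varphi,w}.
\]
There is no real obstacle here: every ingredient has already been developed in the earlier sections of the paper, and the only substitution required relative to the function proof is replacing the Lebesgue measure argument for separability with the trivial observation that the counting measure is separable. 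The author explicitly omits a reproduction of this argument, since it is a verbatim translation of the function-space case into the sequence-space language.
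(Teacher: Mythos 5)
Your proposal is correct and follows precisely the route the paper intends: the paper simply remarks that the sequence analogue of Theorem \ref{MembedF} is obtained by the same argument, pointing to \cite[Theorem 3.11]{KLT3} for part (iii), and your verbatim translation (substituting the separability of the counting measure for that of the Lebesgue measure, and the sequence versions of Theorems \ref{th:reflexiveOL}, \ref{th:KLR}, and Corollary \ref{cor:sep}) is exactly what the authors have in mind. All the supporting lemmas you invoke are indeed stated for the sequence case in the paper, so there is no gap.
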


\subsection{Uniqueness of norm-preserving extension on Orlicz-Lorentz spaces equipped with the Orlicz norm}

By the Hahn-Banach extension theorem, any bounded linear functional on a subspace of a Banach space has a norm-preserving extension to the whole space. In this section, we study the uniqueness of such extension from $(\Lambda_{\varphi,w}^0)_a$ to $\Lambda_{\varphi,w}^0$. In view of \cite[Proposition 1.12]{HWW}, we can deduce that every integral functional $H \in (\Lambda_{\varphi,w})_a^*$ has a unique norm-preserving extension to $\Lambda_{\varphi,w}$ because $(\Lambda_{\varphi,w})_a$ is an $M$-ideal in $\Lambda_{\varphi,w}$. Hence, we only consider when the space is equipped with the Orlicz norm. 

Let us recall the interval $K(f) = [k^*, k^{**}]$ for $f \in \Lambda_{\varphi,w}^0$ \cite{WC}, where   
	\begin{align*}
		k^* &= k^*(f) = \inf\{k > 0 : \rho_{\varphi_*,w}(p(kf)) \geq 1 \},\, \text{and}\\ 
		k^{**} &= k^{**}(f) = \sup\{ k >0 : \rho_{\varphi_*,w}(p(kf)) \leq 1\}.
	\end{align*}  
In general, it is well-known that $0 \leq k^* \leq k^{**} \leq \infty$. By the similar reasoning as Lemma \ref{lem:NfuncinftyK}, we can show that $k^{**} < \infty$ when $\varphi$ is an Orlicz $N$-function. We can also define $k^*(x)$, $k^{**}(x)$, and $K(x)$ for $x \in \lambda_{\varphi,w}$ by replacing $\rho_{\varphi,w}$ with $\alpha_{\varphi,w}$.

Notice that $k^*$ and $k^{**}$ for $\Lambda_{\varphi, w}^0$ as well as $\bar{k}^{*}$ and $\bar{k}^{**}$ for $\mathcal{M}_{\varphi,w}^0$ in section 6 are defined by the same modular $\rho_{\varphi_*, w}$. Such confusion comes from the fact that investigation on the space $\mathcal{M}_{\varphi,w}$ was initiated much later than \cite{WC,WN}. For Orlicz spaces $L_{\varphi}^0$, this is not an issue because the K\"othe dual space of $L_{\varphi}^0$ is $L_{\varphi_*}$, which is another Orlicz space. However, as we mentioned earlier, the K\"othe dual space of an Orlicz-Lorentz space is not precisely an Orlicz-Lorentz space, and its modular shows different behaviors from the one for Orlicz-Lorentz spaces. Now, in view of Proposition \ref{prop:declevel}, we see that
\[
\rho_{\varphi_*, w}(p(kf)) = \rho_{\varphi_*, w}(p(kf^*)) = P_{\varphi_*, w}(p(kf^*)w).
\]
Hence, we provide equivalent definitions of $k^*$ and $k^{**}$ in terms of the modular $P_{\varphi,w}$ to be consistent with the K\"othe duality between Orlicz-Lorentz spaces and the spaces $\mathcal{M}_{\varphi,w}$ as follows.
\begin{align*}
	k^* &= k^*(f) = \inf\{k > 0 : P_{\varphi_*,w}(p(kf^*)w) \geq 1 \},\, \text{and}\\ 
	k^{**} &= k^{**}(f) = \sup\{ k >0 : P_{\varphi_*,w}(p(kf^*)w) \leq 1\}.
\end{align*}  
With the modular $\mathfrak{p}_{\varphi_*,w}$, we can similarly define the constants $k^*$ and $k^{**}$ for Orlicz-Lorentz sequence space $\lambda_{\varphi,w}^0$. 
 
The following results show when the infimum for the Orlicz norm $\|\cdot\|_{\varphi,w}^0$ is attained.

\begin{Theorem}\cite[pg 133]{WC}\label{WC} {\rm{(}c.f. \cite[Proposition 1.5 and 1.8]{WN} for the sequence case\rm{)}}
Let $\varphi$ be an $N$-fucntion.
	\begin{enumerate}[\rm(i)]
		\item If there exists $k>0$ such that $\rho_{\varphi_*,w} (p(k|f|)) = 1$, then $\|f\|_{\varphi,w}^0 = \int_0^{\gamma} f^*p(kf^*)w = \frac{1}{k}(1 + \rho_{\varphi,w} (kf))$.
		\item $k \in K(f)$ if and only if $\|f\|_{\varphi,w}^0 = \frac{1}{k}(1 + \rho_{\varphi,w}(kf))$.
	\end{enumerate}
	The analogous statements occur in Orlicz-Lorentz sequence space when the modular $\rho_{\varphi,w}$ is replaced by the modular $\alpha_{\varphi,w}$.
\end{Theorem}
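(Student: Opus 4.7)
The plan is to adapt the monotonicity argument of Theorem \ref{th:OrliczM} to the simpler setting here. Since $\rho_{\varphi,w}(f) = \int_I \varphi(f^*)w$ does not involve level functions, one can work directly with $f^*$ and $p(kf^*)$ without any detour through Proposition \ref{prop:Halmono} or inverse level functions. The organizing identity is Young's equality $u\,p(u) = \varphi(u) + \varphi_*(p(u))$, which applied to $u = kf^*(t)$ and integrated against $w$ yields
\[
\int_0^\gamma kf^*\,p(kf^*)\,w \;=\; \rho_{\varphi,w}(kf) + \rho_{\varphi_*,w}\bigl(p(kf)\bigr),
\]
where we used that $p$ is non-decreasing, so $(p(kf))^* = p(kf^*)$.

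For (i), the hypothesis $\rho_{\varphi_*,w}(p(k|f|)) = 1$ turns the identity above into $\int f^* p(kf^*)w = \tfrac{1}{k}(1 + \rho_{\varphi,w}(kf))$, which is the second equality. The upper bound $\|f\|_{\varphi,w}^0 \leq \tfrac{1}{k}(1+\rho_{\varphi,w}(kf))$ is immediate from the Amemiya-type definition of the Orlicz norm. The matching lower bound I would derive by invoking Young's \emph{inequality} with \emph{distinct} parameters: for any $k' > 0$,
\[
k' f^*\,p(kf^*) \;\leq\; \varphi(k' f^*) + \varphi_*\bigl(p(kf^*)\bigr)
\]
pointwise, so integrating against $w$, dividing by $k'$, and taking the infimum over $k'$ gives
\[
\tfrac{1}{k}\bigl(1+\rho_{\varphi,w}(kf)\bigr) \;=\; \int f^* p(kf^*)w \;\leq\; \|f\|_{\varphi,w}^0,
\]
completing (i).

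For (ii), I would reproduce the monotonicity analysis of Theorem \ref{th:OrliczM}(iii) verbatim, applied to $T(k) := \tfrac{1}{k}(1+\rho_{\varphi,w}(kf))$ on $(0, 1/\theta(f))$, where $\theta(f) = \inf\{\lambda > 0 : \rho_{\varphi,w}(f/\lambda) < \infty\}$. Applying Young's inequality to the pair $(k_1 f^*, p(k_2 f^*))$ and Young's equality to $(k_2 f^*, p(k_2 f^*))$, subtracting, and simplifying produces
\[
T(k_1) - T(k_2) \;\geq\; \frac{k_1 - k_2}{k_1 k_2}\bigl(\rho_{\varphi_*,w}(p(k_2 f^*)) - 1\bigr) \quad \text{for } k_1 > k_2,
\]
and a symmetric bound for $k_1 < k_2$. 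Hence $T$ is strictly decreasing on $(0, k^*)$, strictly increasing on $(k^{**}, 1/\theta(f))$, and on $[k^*, k^{**}]$ the defining condition $\rho_{\varphi_*,w}(p(kf^*)) = 1$ holds, so part (i) gives $T(k) = \|f\|_{\varphi,w}^0$ there. The biconditional in (ii) follows: if $k \in K(f)$, part (i) supplies the identity; conversely, if $T(k_0) = \|f\|_{\varphi,w}^0$ but $k_0 \notin K(f)$, the strict monotonicity outside $K(f)$ contradicts minimality at $k_0$.

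The anticipated obstacles are twofold. First, one must establish $k^{**} < 1/\theta(f)$ so that $T$ is finite and continuous at the endpoint $k^{**}$; this parallels Lemma \ref{lem:NfuncinftyK} and uses the $N$-function hypothesis, which guarantees $\lim_{u\to\infty}p(u) = \infty$ and $\lim_{u\to\infty}\varphi_*(u)/u = \infty$. Second, the equation $\rho_{\varphi_*,w}(p(kf^*)) = 1$ on the \emph{closed} interval $[k^*, k^{**}]$ is not automatic, since $p$ may have jump discontinuities; one can either work on the open interior and pass to the endpoints by Fatou's lemma and monotone convergence, or reduce to approximating decreasing simple functions as in the proof of Theorem \ref{th:OrliczM}(iii). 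The sequence version is completely analogous, with $\int \cdot\, w$ replaced by $\sum \cdot\, w(i)$, $\rho_{\varphi,w}$ by $\alpha_{\varphi,w}$, and $f^*$ by $x^*$.
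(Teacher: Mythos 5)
The paper itself does not prove Theorem~\ref{WC}: it is cited from Wang--Chen~\cite{WC} and Wang--Ning~\cite{WN}, and the closest in-house analogue is Theorem~\ref{th:OrliczM}, which you explicitly model your argument on. Your specialization is correct, and indeed the passage to $\Lambda_{\varphi,w}$ is cleaner than the $\mathcal{M}_{\varphi,w}$ case because you can bypass inverse level functions entirely: Proposition~\ref{prop:declevel} guarantees $(p(kf^*)w)^0 = p(kf^*)w$, so the modular $\rho_{\varphi_*,w}$ can be applied to $p(kf^*)$ directly without the $w^{f^*}$ bookkeeping of Lemma~\ref{lem:aux}.

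The one place where you genuinely diverge from the route taken in Theorem~\ref{th:OrliczM}(ii) is the proof of part~(i). There, the paper establishes the lower estimate by picking the explicit test function $g$ with $g^* = p(k(f^*)^0/w)$ inside the K\"othe-dual description $\|f\|^0 = \sup\{\int f^*g^* : \rho_{\varphi_*,w}(g)\leq 1\}$, and obtains the upper estimate via Young's inequality applied to $kf^*g^*$. You instead take the Amemiya definition $\|f\|^0 = \inf_{k'>0}\tfrac{1}{k'}(1+\rho_{\varphi,w}(k'f))$ as the trivial upper bound and derive the lower bound by running Young's \emph{inequality} with the mismatched pair $(k'f^*, p(kf^*))$ uniformly in $k'$. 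Both are sound; your route has the small advantage of never invoking the duality pairing with $\Lambda_{\varphi_*,w}$, so it is more self-contained. The monotonicity analysis in part~(ii) is essentially verbatim from Theorem~\ref{th:OrliczM}(iii), and you have correctly flagged the two technical points that proof also handles (finiteness $k^{**}<1/\theta(f)$ via the $N$-function hypothesis, and continuity of $T$ at the endpoints of $K(f)$). One small point to tighten if you write this out in full: the identity $(p(k|f|))^* = p(kf^*)$ needs a sentence of justification, since $p=\varphi'_+$ is only right-continuous and non-decreasing; the standard argument is that $p(k|f|)$ and $p(kf^*)$ are equimeasurable and $p(kf^*)$ is already decreasing, so they coincide a.e., which is all the modular requires.
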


In order to study the existence of a unique norm-preserving extension of a bounded linear functional on $(\Lambda_{\varphi, w}^0)_a$ to $\Lambda_{\varphi, w}^0$, we need to know when a bounded linear functional is norm-attaining. A bounded linear functional $F \in X^*$ on $X$ is said to be {\it norm-attaining} if $|F(f)| = \|F\|_{X^*}$ for some $f \in X$ such that $\|f\|_X = 1$. For Orlicz-Lorentz spaces, it is useful to know an explicit formula for the norm of a bounded linear functional to characterize its norm-attainment. 

\begin{Theorem}\cite[Theorem 3.6]{KLT2} {\rm{(}c.f. \cite[Theorem 3.7]{KLT2} for the sequence case\rm{)}}\label{Orlicz}
	Let $\varphi$ be an $N$-function and $F$ be a bounded linear functional on $\Lambda^0_{\varphi,w}$. Then $F= H + S$, where $H(f) = \int_I fh$ for some
	$h\in \mathcal{M}_{\varphi_*,w}$,
	$\|H\|= \|h\|_{\mathcal{M}_{\varphi_*,w}}$, $S(f)=0$
	for all $f\in (\Lambda_{\varphi,w})_a$, and   $\|F\| = \inf\{\lambda>0 : P_{\varphi_*,w}(\frac{h}{\lambda}) + \frac{1}{\lambda}\|S\| \leq 1\}$. The similar formula also holds for the sequence space $\lambda_{\varphi,w}$.
\end{Theorem}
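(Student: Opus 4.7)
The plan is to combine a Yosida--Hewitt-type decomposition of the dual, the K\"othe duality identification of the integral part via Theorem \ref{th:KLR}, and a Young-type modular inequality to derive the norm formula.

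First, by the standard Nakano--Yosida--Hewitt decomposition for dual functionals on a Banach function lattice, one writes $F = H + S$ uniquely, with $H$ order-continuous and $S$ singular in the sense that $S$ vanishes on the order-continuous ideal $(\Lambda_{\varphi,w})_a$. Second, because $H$ is order-continuous, it lies in the order-continuous dual, which by Theorem \ref{th:KLR} is isometric to $\mathcal{M}_{\varphi_*, w}$ (Luxemburg norm); thus $h \in \mathcal{M}_{\varphi_*, w}$ exists with $H(f) = \int_I fh$ and $\|H\| = \|h\|_{\mathcal{M}_{\varphi_*, w}}$, the norm equality arising from K\"othe duality together with the Fatou property of $\Lambda_{\varphi,w}^0$.

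The heart of the proof is the norm formula. I would first derive the auxiliary Young-type inequality
\[
\left|\int_I fh\right| \;\leq\; \int_I f^* h^* \;\leq\; \rho_{\varphi,w}(kf) + P_{\varphi_*,w}(h/k), \qquad k>0,
\]
by bounding with Hardy--Littlewood, applying pointwise Young's inequality against an arbitrary decreasing $v \prec w$, invoking Hardy's Lemma \ref{lem:hardy} to replace $v$ by $w$ in the $\varphi(kf^*)$ piece (justified since $\varphi(kf^*)$ is decreasing), and finally taking the infimum over $v$ in the definition of $P_{\varphi_*, w}$. For the upper bound $\|F\| \leq \inf\{\lambda : P_{\varphi_*, w}(h/\lambda) + \|S\|/\lambda \leq 1\}$, combine the Young inequality at $k = \lambda$ with the elementary estimate $|S(f)| \leq \|S\|\,\|f\|_{\varphi,w}^0$ and the Orlicz-norm attainment at an optimal $k \in K(f)$ from Theorem \ref{WC}, to obtain $|F(f)| \leq \lambda\,\|f\|_{\varphi,w}^0$ after cancellation of the $\rho_{\varphi,w}(kf)$ term against the Orlicz-norm equation $\rho_{\varphi,w}(kf)+1=k\|f\|_{\varphi,w}^0$. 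For the lower bound, given $\lambda$ with $P_{\varphi_*, w}(h/\lambda) + \|S\|/\lambda > 1$, construct a test function $f = g + g'$ with disjoint supports: take $g$ realising equality in Young's inequality for the pairing with $h/\lambda$ (obtained via Lemma \ref{lem:aux} by choosing $g$ so that its level structure matches $h$, using that $\varphi$ is an $N$-function so $P_{\varphi_*,w} = Q_{\varphi_*,w}$), and $g'$ outside $(\Lambda_{\varphi,w})_a$ so that $|S(g')|$ approaches $\|S\|\,\|g'\|_{\varphi,w}^0$; balancing the Orlicz norms via Theorem \ref{WC} and normalising, $f$ produces $|F(f)|/\|f\|_{\varphi,w}^0 > \lambda - \epsilon$.

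The main obstacle is this lower-bound construction. The Orlicz norm does not decouple additively, and the modular constraint $P_{\varphi_*, w}(h/\lambda) + \|S\|/\lambda \leq 1$ is the precise non-linear coupling dictated by the Orlicz unit sphere. Building $f = g + g'$ requires simultaneous extremality of $g$ in the Young pairing against $h/\lambda$ (where the $N$-function hypothesis is essential to access the equality case through the level-function identification $\varphi_*(p(f^*/w^{f^*}))$ of Lemma \ref{lem:aux}) and extremality of $g'$ for $S$ on a disjoint support, arranged so that the combined Orlicz norm equals $1$. This is where the interplay of the hypotheses---the $N$-function structure of $\varphi$, the level-function theory underlying $\mathcal{M}_{\varphi_*, w}$, and the Orlicz-norm attainment result of Theorem \ref{WC}---all come into play.
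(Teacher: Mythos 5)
The paper does not prove this statement; it is imported verbatim as a citation to \cite[Theorem~3.6]{KLT2}, so there is no in-paper proof to compare against. Your overall architecture (Yosida--Hewitt splitting $F=H+S$, identification of $H$ via K\"othe duality, a Young-type modular inequality for the norm formula) is the natural and correct one, and the inequality $\int_I f^*h^*\leq \frac{\lambda}{k}\big(\rho_{\varphi,w}(kf)+P_{\varphi_*,w}(h/\lambda)\big)$ obtained from Hardy's Lemma, $h^*\prec (h^*)^0$, and pointwise Young against $v\prec w$ is exactly what is needed.

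However, there is a genuine gap in your upper-bound step: the ``elementary estimate'' $|S(f)|\leq\|S\|\,\|f\|^0_{\varphi,w}$ is too weak to close the argument. Carrying it through with $\|f\|^0_{\varphi,w}=1$ and $k\in K(f)$ (so $\rho_{\varphi,w}(kf)=k-1$) gives
\[
|F(f)|\ \leq\ \frac{\lambda}{k}\big(k-1+P_{\varphi_*,w}(h/\lambda)\big)+\|S\|\ \leq\ \lambda+\|S\|\Big(1-\tfrac1k\Big),
\]
which exceeds $\lambda$ whenever $k>1$ and $S\neq 0$; the desired cancellation does not occur. What is actually needed is the sharper bound $|S(f)|\leq \|S\|/k$, which is not formal but follows from Theorem~\ref{theta}: $\|S\|=\sup\{S(g):\rho_{\varphi,w}(g)<\infty\}$, and $\rho_{\varphi,w}(kf)=k-1<\infty$ for $k\in K(f)$, so $|S(kf)|\leq\|S\|$. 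With that refinement the terms cancel exactly to give $|F(f)|\leq\lambda$. Separately, your lower bound is only a sketch: you correctly flag that balancing a Young-extremal $g$ (built from $p$ and the level structure of $h$) against a singular-extremal $g'$ on disjoint supports so that the combined Orlicz norm is $1$ is the crux, but you do not carry it out, and the orthogonal superadditivity/subadditivity mismatch between $P_{\varphi_*,w}$ and $\rho_{\varphi,w}$ over disjoint supports is precisely where such a construction requires real work, not just a gluing. As written the proposal does not establish the result.
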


\begin{Theorem}\cite[Theorem 2.2]{KLT2}\label{theta}
	For any singular functional $S$ of $\Lambda_{\varphi,w}$ equipped with the Luxemburg norm or the Orlicz norm, $\|S\| = \|S\|_{(\Lambda_{\varphi,w})^*} = \|S\|_{(\Lambda_{\varphi,w}^0)^*} = \sup\{S(f) : \rho_{\varphi,w}(f) < \infty\} = \sup\{\frac{S(f)}{\theta(f)} : f \in \Lambda_{\varphi,w} \setminus (\Lambda_{\varphi,w})_a\}$.
	The analogous formulas hold for Orlicz-Lorentz sequence spaces.
\end{Theorem}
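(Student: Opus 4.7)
The plan is to prove all four equalities simultaneously by exploiting that $S$ vanishes on $(\Lambda_{\varphi,w})_a$. For $f \in \Lambda_{\varphi,w}$, set $f_n = f\chi_{\{1/n < |f| < n\}}$ as in Theorem \ref{Order}: this is bounded with support of finite measure (by Proposition \ref{prop:distfin}), hence lies in $(\Lambda_{\varphi,w})_a$, so $S(f) = S(f-f_n)$ for every $n$. Set $\theta(f) = \inf\{\lambda > 0 : \rho_{\varphi,w}(f/\lambda) < \infty\}$. The analogue of Theorem \ref{Order} for $\Lambda_{\varphi,w}$ gives $f \in (\Lambda_{\varphi,w})_a \iff \theta(f) = 0$, and always $\theta(f) \leq \|f\|_{\varphi,w}$ since $\|f\|_{\varphi,w} < \lambda$ forces $\rho_{\varphi,w}(f/\lambda) \leq 1 < \infty$. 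Moreover, $\theta(f-f_n) = \theta(f)$: for $\lambda < \theta(f)$, orthogonal subadditivity of $\rho_{\varphi,w}$ on disjointly supported functions gives $\infty = \rho_{\varphi,w}(f/\lambda) \leq \rho_{\varphi,w}(f_n/\lambda) + \rho_{\varphi,w}((f-f_n)/\lambda)$, and the first term is finite by boundedness of $f_n$, forcing the second to be infinite.

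The key technical lemma is $\|f - f_n\|_{\varphi,w} \to \theta(f)$ and $\|f - f_n\|^0_{\varphi,w} \to \theta(f)$ as $n \to \infty$. The lower bounds $\|f-f_n\|_{\varphi,w} \geq \theta(f-f_n) = \theta(f)$ are immediate. For the Luxemburg upper bound, pick $\lambda > \theta(f)$: then $\rho_{\varphi,w}(f/\lambda) < \infty$, and since $(f-f_n)^* \to 0$ a.e.\ by Lemma \ref{convto0dec} with $(f-f_n)^* \leq f^*$ as dominator, dominated convergence yields $\rho_{\varphi,w}((f-f_n)/\lambda) \to 0$, so $\|f-f_n\|_{\varphi,w} \leq \lambda$ eventually. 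The Orlicz-norm analogue follows by evaluating $(1+\rho_{\varphi,w}(k(f-f_n)))/k$ at $k \uparrow 1/\theta(f)$, whose pointwise-in-$k$ limit is $1/k$ with infimum $\theta(f)$.

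Given the lemma, the equalities drop out. For $\|S\|_{(\Lambda_{\varphi,w})^*} = \|S\|_{(\Lambda_{\varphi,w}^0)^*}$: one direction is from $\|\cdot\|_{\varphi,w} \leq \|\cdot\|_{\varphi,w}^0$; for the other, any $f$ with $\|f\|_{\varphi,w} \leq 1$ satisfies $\theta(f) \leq 1$, hence $|S(f)| = |S(f-f_n)| \leq \|S\|_{(\Lambda_{\varphi,w}^0)^*} \|f-f_n\|^0_{\varphi,w} \to \theta(f)\|S\|_{(\Lambda_{\varphi,w}^0)^*} \leq \|S\|_{(\Lambda_{\varphi,w}^0)^*}$. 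For $\|S\| = \sup\{S(f) : \rho_{\varphi,w}(f) < \infty\}$: any such $f$ has $\theta(f) \leq 1$ (since $\rho_{\varphi,w}(f/1) < \infty$), so $|S(f)| \leq \theta(f)\|S\| \leq \|S\|$; conversely $\{\|f\|_{\varphi,w} \leq 1\} \subseteq \{\rho_{\varphi,w}(f) \leq 1\}$ by the Fatou property. For $\|S\| = \sup\{S(f)/\theta(f) : f \notin (\Lambda_{\varphi,w})_a\}$, the upper bound is the tail estimate $|S(f)| \leq \theta(f)\|S\|$, and the lower bound comes by approximating $\|S\|$ via $f$ in the unit ball, where $S(f) \neq 0$ forces $f \notin (\Lambda_{\varphi,w})_a$ and $|S(f)|/\theta(f) \geq |S(f)|/\|f\|_{\varphi,w} \geq |S(f)|$.

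The main obstacle is the Orlicz-norm version of the key lemma. While the Luxemburg side is handled cleanly by dominated convergence, controlling $\|f-f_n\|^0_{\varphi,w} = \inf_k (1+\rho_{\varphi,w}(k(f-f_n)))/k$ requires a two-parameter analysis as both $k$ varies and $n$ grows: one must show the infimum converges to $\inf_{k < 1/\theta(f)} 1/k = \theta(f)$ and cannot be attained (even approximately) at $k$ approaching $1/\theta(f)$ from below, where $\rho_{\varphi,w}(k(f-f_n))$ may simultaneously blow up. Handling this delicate boundary behavior, together with verifying that $\theta(f-f_n) = \theta(f)$ rigorously via the rearrangement $f^*$, is the technical heart of the proof.
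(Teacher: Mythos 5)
The paper offers no proof of this theorem; it is cited directly from \cite[Theorem 2.2]{KLT2}, so there is no in-text argument to compare against. Evaluated on its own, your reconstruction is correct and takes the natural route: reduce to the ``tail'' $f-f_n$ (which changes neither $S(f)$ nor $\theta$), prove the key lemma $\|f-f_n\|_{\varphi,w} \to \theta(f)$ and $\|f-f_n\|^0_{\varphi,w}\to\theta(f)$, and then read off the four equalities via the tail estimate $|S(f)| \leq \theta(f)\,\|S\|$. The verification that $\theta(f-f_n)=\theta(f)$ (orthogonal subadditivity for $\geq$, domination by $|f|$ for $\leq$) and the Luxemburg--norm convergence by dominated convergence over $f^*$ are both sound.

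The concern you raise in the closing paragraph about the Orlicz-norm convergence is unfounded, and you should not present it as a remaining obstacle. There is no genuine two-parameter interaction to control: one fixes $k_0 < 1/\theta(f)$, applies dominated convergence at that fixed $k_0$ to conclude $\rho_{\varphi,w}(k_0(f-f_n))\to 0$, hence $\limsup_n\|f-f_n\|^0_{\varphi,w} \leq (1+\rho_{\varphi,w}(k_0(f-f_n)))/k_0 \to 1/k_0$, and only \emph{afterwards} takes the infimum over $k_0 < 1/\theta(f)$ to get $\limsup_n\|f-f_n\|^0_{\varphi,w} \leq \theta(f)$. The lower bound $\liminf_n \|f-f_n\|^0_{\varphi,w} \geq \liminf_n \|f-f_n\|_{\varphi,w} \geq \theta(f-f_n) = \theta(f)$ then closes the squeeze. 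Because the limit in $n$ and the infimum in $k$ are decoupled, no analysis at $k$ near $1/\theta(f)$ is needed, and the feared blow-up of $\rho_{\varphi,w}(k(f-f_n))$ near that boundary is irrelevant. With that paragraph replaced by this observation, the proof is complete.
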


Now, we are ready to provide a characterization for the norm-attaining functionals on $\Lambda_{\varphi,w}^0$ and $\lambda_{\varphi,w}^0$. 

\begin{Theorem}\label{th:suppfunc}
	Let $\varphi$ be an $N$-function and let $F = H + S$ be a bounded linear functional on $\Lambda_{\varphi,w}^0$, where $H(f) = \int_I fh$ for some $h \in \mathcal{M}_{\varphi_*,w}$ and $S(f) = 0$ at $f \in (\Lambda_{\varphi,w}^0)_a$. Then $F$ is norm-attaining if and only if there exists $f \in \Lambda_{\varphi, w}^0$ with $\|f\|_{\varphi,w}^0 = 1$ such that for some $k \in K(f)$, the following conditions are satisfied 
	\begin{enumerate}[\rm(i)]
		\item $P_{\varphi_*,w}(\frac{h}{\|F\|}) + \frac{\|S\|}{\|F\|} = 1$,
		\item $\|S\| = S(kf)$,
		\item $\int_I hf= \int_I kh^*f^* = \int_I (h^*)^0f^*$ and $\int_I \frac{k(h^*)^0f^*}{\|F\|}= \rho_{\varphi,w}(kf) + P_{\varphi_*,w}(\frac{h}{\|F\|}).$
	\end{enumerate} 
\end{Theorem}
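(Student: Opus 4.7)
The plan is to prove both implications by chaining together a sequence of sharp inequalities whose equality cases correspond precisely to the three conditions (i)--(iii). The four ingredients I will rely on are: the Hardy--Littlewood inequality, Hardy's Lemma (Lemma~\ref{lem:hardy}) combined with the property $\int_0^t h^*\le\int_0^t(h^*)^0$ inherited from Sinnamon's $w$-concave majorant construction via (\ref{eq:important}), Young's inequality for $\varphi$ and $\varphi_*$ as assembled in Lemma~\ref{lem:aux} (using that $\varphi_*$ is an $N$-function since $\varphi$ is), and the explicit norm formula $\|F\|=\inf\{\lambda:P_{\varphi_*,w}(h/\lambda)+\|S\|/\lambda\le 1\}$ from Theorem~\ref{Orlicz}, which under the $N$-function hypothesis is attained, so $P_{\varphi_*,w}(h/\|F\|)+\|S\|/\|F\|=1$ holds automatically and appears in the argument merely as equality in a chain.

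For the forward direction, assume $F(f_0)=\|F\|$ with $\|f_0\|_{\varphi,w}^0=1$; replacing $f_0$ by a suitable unimodular multiple I may assume $H(f_0)\ge 0$ and $S(f_0)\ge 0$. Fix $k\in K(f_0)$, so that $\rho_{\varphi,w}(kf_0)=k-1$ by Theorem~\ref{WC}. First I would write
\[
\|F\|=H(f_0)+S(f_0)\;\le\;\int_I h^* f_0^*\;+\;S(f_0)\;\le\;\int_I (h^*)^0 f_0^*\;+\;S(f_0),
\]
by Hardy--Littlewood and then by Hardy's Lemma applied to the decreasing $f_0^*$. Next, apply Young's inequality in the form $\frac{k}{\|F\|}f_0^*\cdot\frac{(h^*)^0}{w}\le\varphi(kf_0^*)+\varphi_*\!\bigl(\tfrac{(h^*)^0/\|F\|}{w}\bigr)$, multiply by $w$, integrate, and invoke Lemma~\ref{lem:aux} to turn the resulting $Q_{\varphi_*,w}(h/\|F\|)$ into $P_{\varphi_*,w}(h/\|F\|)$. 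For the singular piece, Theorem~\ref{theta} gives $S(f_0)\le\|S\|\,\theta(f_0)$, and since $\rho_{\varphi,w}(kf_0)<\infty$ we have $\theta(f_0)\le 1/k$, hence $kS(f_0)\le\|S\|$. Dividing the initial chain by $\|F\|$, multiplying by $k$, and substituting $\rho_{\varphi,w}(kf_0)=k-1$, I arrive at
\[
1\;\le\;P_{\varphi_*,w}(h/\|F\|)+\tfrac{kS(f_0)}{\|F\|}\;\le\;P_{\varphi_*,w}(h/\|F\|)+\tfrac{\|S\|}{\|F\|}\;\le\;1,
\]
so every inequality in the entire sequence collapses to equality. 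Reading off the equalities yields (i) from the last step (and Theorem~\ref{Orlicz}), (ii) from $kS(f_0)=\|S\|$, and (iii) from the Hardy--Littlewood / Hardy's Lemma equalities together with the Young's-inequality equality case.

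For the converse, suppose $f$ with $\|f\|_{\varphi,w}^0=1$ and $k\in K(f)$ satisfy (i)--(iii). Using $\rho_{\varphi,w}(kf)=k-1$, rewrite (iii) as $\int_I (h^*)^0 f^* = \frac{\|F\|}{k}(k-1+P_{\varphi_*,w}(h/\|F\|))$, and (ii) as $S(f)=\|S\|/k$. Then
\[
F(f)=\int_I hf+S(f)=\tfrac{\|F\|}{k}\bigl(k-1+P_{\varphi_*,w}(h/\|F\|)\bigr)+\tfrac{\|S\|}{k}
=\tfrac{1}{k}\bigl[\|F\|(k-1)+\|F\|\bigr]=\|F\|
\]
by (i), so $F$ attains its norm at $f$.

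The step I expect to be the main obstacle is the sharp estimate $kS(f_0)\le\|S\|$ in the forward direction: it simultaneously needs the characterization of $\|S\|$ via $\theta$ from Theorem~\ref{theta}, the elementary but slightly delicate bound $\theta(f_0)\le 1/k^{**}$ coming from $\rho_{\varphi,w}(kf_0)<\infty$ for $k\in K(f_0)$, and a careful handling of the case $f_0\in(\Lambda_{\varphi,w})_a$ where both sides vanish. A secondary technical point is reading off the equality case for $\int_I h^* f_0^*=\int_I (h^*)^0 f_0^*$ in (iii): since $(h^*)^0$ differs from $h^*$ only on maximal level intervals of $h^*$, equality here is a genuine structural constraint on how $f_0^*$ behaves over those intervals, and keeping track of this is where the sequence/function parallel will require the most care.
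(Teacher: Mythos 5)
Your proposal is correct and follows essentially the same route as the paper: the forward direction chains Hardy--Littlewood, Hardy's Lemma (via $h^*\prec (h^*)^0$), Young's inequality through Lemma~\ref{lem:aux}, and the singular estimate from Theorem~\ref{theta}, then reads off (i)--(iii) from the collapse to equality; the converse is the same direct computation using Theorem~\ref{WC}. Two small points. First, you obtain $kS(f_0)\le\|S\|$ via $S(f_0)\le\|S\|\theta(f_0)$ and $\theta(f_0)\le 1/k$; the paper uses the equivalent, and slightly simpler, form $\|S\|=\sup\{S(g):\rho_{\varphi,w}(g)<\infty\}$ from the same Theorem~\ref{theta} applied to $kf$, which sidesteps the extra case $f_0\in(\Lambda_{\varphi,w})_a$ you correctly flag. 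Second, your phrasing that ``$P_{\varphi_*,w}(h/\|F\|)+\|S\|/\|F\|=1$ holds automatically'' overstates the matter: what is automatic (and requires the Monotone Convergence Theorem applied along $\lambda_n\downarrow\|F\|$, which the paper proves as inequality (\ref{welld})) is only the non-strict inequality $\le 1$; equality is then forced by the tightness of the norm-attaining chain, which is precisely why (i) is a genuine condition in the equivalence rather than a tautology. Your proof uses it that way in the chain, so the argument stands, but the stated reason should be corrected.
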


\begin{proof}
	Let $f \in \Lambda_{\varphi,w}^0$ be such that $\|f\|_{\varphi,w}^0 = 1$, $\|F\|= F(f)$ and $k \in K(f)$. Then we have $\rho_{\varphi,w}(kf) < \infty$. Indeed, notice that $1 = \|f\|^0 = \frac{1}{k} (1 + \rho_{\varphi,w}(kf))$ for $k \in K(f)$ from Theorem \ref{WC}. Hence $\rho_{\varphi,w}(kf) = k-1 < k^{**}(f) < \infty$. Let $h \in \mathcal{M}_{\varphi_*,w}$. From the fact that $h^* \prec (h^*)^0$, the Young's inequality and Lemma \ref{lem:hardy} give us  
	\begin{eqnarray*}
		1 = \frac{F(f)}{\|F\|} = \frac{H(f)}{\|F\|} + \frac{S(f)}{\|F\|} &\leq& \frac{\int_I h^*f^*}{\|F\|} + \frac{S(f)}{\|F\|} \leq \frac{\int_I (h^*)^0f^*}{\|F\|} + \frac{S(f)}{\|F\|}\\
		&=& \frac{1}{k}\left(\int_I \frac{k(h^*)^0f^*}{w\|F\|}w + \frac{S(kf)}{\|F\|}\right)\\
		&\leq& \frac{1}{k}\left(\int_I \varphi(kf^*)w + \int_I \varphi_*\left(\frac{(h^*)^0}{w\|F\|}\right)w + \frac{S(kf)}{\|F\|}\right)\\
		&=&\frac{1}{k}\left(\rho_{\varphi,w}(kf) + P_{\varphi_*,w}\left(\frac{h}{\|F\|}\right) + \frac{S(kf)}{\|F\|}\right).
	\end{eqnarray*} 
	Now, we claim that
	\begin{equation}\label{welld}
		P_{\varphi_*,w}\left(\frac{h}{\|F\|}\right) + \frac{\|S\|}{\|F\|} \leq 1.
	\end{equation} 
	Indeed, in view of Theorem \ref{Orlicz}, let $(\lambda_n)$ be a sequence of real numbers such that $(\lambda_n) \downarrow \|F\|$ and $P_{\varphi_*,w}(\frac{h}{\lambda_n}) + \frac{\|S\|}{\lambda_n} \leq 1$ for every $n \in \mathbb{N}$. Let $g(k) = P_{\varphi_*,w}(kh) + k\|S\|$ for $k>0$. The function $g(k)$ is increasing and continuous on the interval $(0, 1/\bar{\theta})$, where $\bar{\theta} = \bar{\theta}(h) = \inf\left\{\lambda >0 : P_{\varphi_*,w}\left(\frac{h}{\lambda}\right) < \infty\right\}$. Notice that $P_{\varphi_*,w}(\frac{h}{\lambda_n}) + \frac{\|S\|}{\lambda_n} \leq 1$ for every $n \in \mathbb{N}$. Hence 
	\[
	\lim_{n \rightarrow \infty}P_{\varphi_*,w}\left(\frac{h}{\lambda_n}\right) + \frac{\|S\|}{\|F\|} = \lim_{n \rightarrow \infty} \left(P_{\varphi_*,w}\left(\frac{h}{\lambda_n}\right) + \frac{\|S\|}{\lambda_n}\right) \leq 1.
	\]
	Since $\frac{1}{\lambda_n} \uparrow \frac{1}{\|F\|}$,  we have $\varphi_*\left(\frac{(h^*)^0}{\lambda_n w}\right)w \uparrow \varphi_*\left(\frac{(h^*)^0}{\|F\| w}\right)w$ a.e. So by the Monotone Convergence Theorem, 
	\[
	\lim_{n \rightarrow \infty}P_{\varphi_*,w}\left(\frac{h}{\lambda_n}\right)  = \lim_{n\rightarrow \infty}\int_I\varphi_*\left(\frac{(h^*)^0}{\lambda_n w}\right)w = \int_I \varphi_*\left(\frac{(h^*)^0}{\|F\| w}\right)w = P_{\varphi_*, w}\left(\frac{h}{\|F\|}\right),
	\]
	and this proves inequality (\ref{welld}).
	
	Since $\rho_{\varphi,w}(kf) < \infty$, $S(kf) \leq \|S\|$ by Theorem \ref{theta}. Now, from (\ref{welld}) we obtain 
	\begin{eqnarray*}
		\frac{1}{k}\left(\rho_{\varphi,w}(kf) + P_{\varphi_*,w}\left(\frac{h}{\|F\|}\right) + \frac{S(kf)}{\|F\|}\right) &\leq& \frac{1}{k}\left(\rho_{\varphi,w}(kf) + P_{\varphi_*,w}\left(\frac{h}{\|F\|}\right) + \frac{\|S\|}{\|F\|}\right)\\
		&\leq& \frac{1}{k}(\rho_{\varphi,w}(kf) + 1).
	\end{eqnarray*}
	Notice that $1= \|f\|_{\varphi,w}^0  =  \frac{1}{k}(\rho_{\varphi,w}(kf) + 1)$ from Theorem \ref{WC}.(ii). This consequently shows that 
	
	\begin{eqnarray*}
		1 = \frac{H(f)}{\|F\|} + \frac{S(f)}{\|F\|} = \frac{\int_I h^*f^*}{\|F\|} + \frac{S(f)}{\|F\|} &=& \frac{\int_I (h^*)^0f^*}{\|F\|} + \frac{S(f)}{\|F\|}\\
		&=& \frac{1}{k}\left(\rho_{\varphi,w}(kf) + P_{\varphi_*,w}\left(\frac{h}{\|F\|}\right) + \frac{S(kf)}{\|F\|}\right)\\
		&=& \frac{1}{k}\left(\rho_{\varphi,w}(kf) + P_{\varphi_*,w}\left(\frac{h}{\|F\|}\right) + \frac{\|S\|}{\|F\|}\right)\\
		&=& \frac{1}{k}(\rho_{\varphi,w}(kf) + 1).
	\end{eqnarray*}
	The sixth and seventh expressions above give us condition (i). Condition (ii) comes from the fifth and sixth expressions. From the second, third, and fourth expressions gives us the first part of condition (iii), and the fourth and fifth expressions show the second part of condition (iii). The proof is finished.
	
	To show the converse, let $f \in \Lambda_{\varphi,w}^0$ such that $\|f\|_{\varphi,w}^0 = 1$ and $k \in K(f)$. Suppose that the conditions (i), (ii) and (iii) are satisfied. Then $1 = \|f\|^0 = \frac{1}{k}(1 + \rho_{\varphi,w}(kf))$ by Theorem \ref{WC}. Let $F = H + S$ be a bounded linear functional on $\Lambda_{\varphi,w}^0$ where $H(f) = \int_I fh$ for some $h \in \mathcal{M}_{\varphi_*,w}$ and $S(f) = 0$ for every $f \in (\Lambda_{\varphi,w}^0)_a$. Hence 
	\small
	\[
	1 = \frac{1}{k}(1 + \rho_{\varphi,w}(kf)) \overset{\rm{(i)}}{=} \frac{1}{k}\left(\rho_{\varphi,w}(kf) + P_{\varphi_*,w}\left(\frac{h}{\|F\|}\right) + \frac{\|S\|}{\|F\|}\right) \overset{\rm{(ii), (iii)}}{=} \frac{1}{k}\left(\int_I \frac{khf}{\|F\|} + \frac{S(kf)}{\|F\|}\right).
	\]
	\normalsize
	So $1 = \frac{1}{k}\left(\int_I \frac{khf}{\|F\|} + \frac{S(kf)}{\|F\|}\right) = \frac{H(f)}{\|F\|}+ \frac{S(f)}{\|F\|} = \frac{F(f)}{\|F\|}$, and the proof is finished.
\end{proof}

By the similar argument, we have the sequence analogue of Theorem \ref{th:suppfunc}.

\begin{Theorem}\label{th:suppseq}
	Let $\varphi$ be an $N$-function and let $F = H + S$ be a bounded linear functional on $\lambda_{\varphi,w}^0$, where $H(x) = \sum_{i = 1}^{\infty} x(i)h(i)$ for some $h \in \mathfrak{m}_{\varphi_*,w}$ and $S(x) = 0$ at $x \in (\lambda_{\varphi,w}^0)_a$. Then $F$ is norm-attaining if and only if there exists $x \in \lambda_{\varphi,w}^0$ with $\|x\|_{\varphi,w}^0 = 1$ such that for some $k \in K(x)$, the following conditions are satisfied 
	\begin{enumerate}[\rm(i)]
		\item $p_{\varphi_*,w}(\frac{h}{\|F\|}) + \frac{\|S\|}{\|F\|} = 1$,
		\item $\|S\| = S(kx)$,
		\item $\sum_{i=1}^{\infty} \frac{kh(i)x(i)}{\|F\|}= \alpha_{\varphi,w}(kx) + p_{\varphi_*,w}(\frac{h}{\|F\|}).$
	\end{enumerate} 
\end{Theorem}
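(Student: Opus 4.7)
The plan is to transcribe the proof of Theorem~\ref{th:suppfunc} to the sequence setting by replacing integrals over $I$ with sums over $\mathbb{N}$, the modulars $\rho_{\varphi,w}$ and $P_{\varphi_*,w}$ by their sequence counterparts $\alpha_{\varphi,w}$ and $\mathfrak{p}_{\varphi_*,w}$, and invoking the sequence versions of Theorems~\ref{WC}, \ref{Orlicz}, and~\ref{theta}, together with the Hardy--Littlewood inequality for sequences, Hardy's lemma for sequences, and Young's inequality.

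For the forward direction, I would assume $F(x) = \|F\|$ for some $x$ with $\|x\|_{\varphi,w}^0 = 1$. Because $\varphi$ is an $N$-function, $K(x) = [k^*, k^{**}]$ is nonempty with $k^{**} < \infty$, and for any $k \in K(x)$ the sequence analogue of Theorem~\ref{WC}.(ii) gives
\[
1 = \|x\|_{\varphi,w}^0 = \frac{1}{k}\bigl(1 + \alpha_{\varphi,w}(kx)\bigr),
\]
so $\alpha_{\varphi,w}(kx) = k-1 < \infty$. Writing $F = H + S$ as in Theorem~\ref{Orlicz}, I would then chain
\[
1 = \frac{H(x) + S(x)}{\|F\|} \leq \frac{\sum_i h^*(i) x^*(i)}{\|F\|} + \frac{S(x)}{\|F\|} \leq \frac{\sum_i (h^*)^0(i) x^*(i)}{\|F\|} + \frac{S(x)}{\|F\|},
\]
using the Hardy--Littlewood inequality together with $h^* \prec (h^*)^0$ and Hardy's lemma, and then apply Young's inequality with $u = kx^*(i)$, $v = (h^*)^0(i)/(w(i)\|F\|)$ (multiplied by $w(i)$ and summed) to bound the preceding expression above by
\[
\frac{1}{k}\left(\alpha_{\varphi,w}(kx) + \mathfrak{p}_{\varphi_*,w}\left(\frac{h}{\|F\|}\right) + \frac{S(kx)}{\|F\|}\right).
\]
Since $\alpha_{\varphi,w}(kx) < \infty$, the sequence form of Theorem~\ref{theta} yields $S(kx) \leq \|S\|$, and the sequence form of Theorem~\ref{Orlicz} combined with the Monotone Convergence Theorem for sums gives $\mathfrak{p}_{\varphi_*,w}(h/\|F\|) + \|S\|/\|F\| \leq 1$. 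Plugging both in collapses the upper bound to $\frac{1}{k}(1 + \alpha_{\varphi,w}(kx)) = 1$, so every inequality in the chain must be an equality; these equalities translate directly into (i) (from $\mathfrak{p}_{\varphi_*,w}(h/\|F\|) + \|S\|/\|F\| = 1$), (ii) (from $S(kx) = \|S\|$), and (iii) (from the rearrangement--Hardy--Young chain being an equality).

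For the converse, given $\|x\|_{\varphi,w}^0 = 1$, $k \in K(x)$ and (i)--(iii), I would substitute (iii) to rewrite $kH(x)/\|F\|$ as $\alpha_{\varphi,w}(kx) + \mathfrak{p}_{\varphi_*,w}(h/\|F\|)$, use (ii) to replace $S(kx)/\|F\|$ by $\|S\|/\|F\|$, and apply (i) to obtain
\[
\frac{F(x)}{\|F\|} = \frac{\alpha_{\varphi,w}(kx) + 1}{k} = \|x\|_{\varphi,w}^0 = 1
\]
via Theorem~\ref{WC}.(ii), which finishes the equivalence.

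The main technical point I anticipate is the inequality $\mathfrak{p}_{\varphi_*,w}(h/\|F\|) + \|S\|/\|F\| \leq 1$: I would take $\lambda_n \downarrow \|F\|$ with $\mathfrak{p}_{\varphi_*,w}(h/\lambda_n) + \|S\|/\lambda_n \leq 1$ from the sequence version of Theorem~\ref{Orlicz}, and then pass to the limit using the sequence form of Proposition~\ref{prop:Halmono} (for monotonicity of level sequences) together with the Monotone Convergence Theorem for series. Every remaining step is a routine translation of the function-space argument.
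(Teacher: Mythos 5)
Your proposal is correct and is essentially the proof the paper intends: the paper gives no separate argument for Theorem~\ref{th:suppseq}, simply stating that it follows ``by the similar argument'' from Theorem~\ref{th:suppfunc}, and you have carried out exactly that transcription — replacing $\rho_{\varphi,w}, P_{\varphi_*,w}$ with $\alpha_{\varphi,w}, \mathfrak{p}_{\varphi_*,w}$, integrals with sums, and invoking the sequence versions of Theorems~\ref{WC}, \ref{Orlicz}, \ref{theta}, the Hardy--Littlewood rearrangement inequality for sequences, Hardy's lemma, Young's inequality, and monotone convergence for series. The one small point worth noting is that condition (iii) in the sequence statement compresses the three separate equalities (Hardy--Littlewood, Hardy's lemma on $h^* \prec (h^*)^0$, and Young) of the function-space version into a single chain; your remark that (iii) follows ``from the rearrangement--Hardy--Young chain being an equality'' correctly accounts for this.
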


\begin{Proposition}\label{existext}
	Let $\varphi$ be an $N$-function. If $H$ is a bounded linear functional on $(\Lambda_{\varphi,w}^0)_a$, then it has a norm-preserving extension to the whole space $\Lambda_{\varphi,w}^0$, which is also regular. The analogous statement holds for $\lambda_{\varphi,w}^0$.
\end{Proposition}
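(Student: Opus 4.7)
The plan is to combine the Hahn-Banach theorem with the structural decomposition of bounded linear functionals on $\Lambda_{\varphi,w}^0$ given in Theorem~\ref{Orlicz}. First I would apply Hahn-Banach to extend $H \in ((\Lambda_{\varphi,w}^0)_a)^*$ to a bounded linear functional $\tilde{F}$ on all of $\Lambda_{\varphi,w}^0$ with $\|\tilde{F}\| = \|H\|$. Next, I would invoke Theorem~\ref{Orlicz} to decompose $\tilde{F} = \tilde{H} + \tilde{S}$, where $\tilde{H}(f) = \int_I fh$ for some $h \in \mathcal{M}_{\varphi_*,w}$ with $\|\tilde{H}\| = \|h\|_{\mathcal{M}_{\varphi_*,w}}$, and $\tilde{S}$ vanishes on $(\Lambda_{\varphi,w})_a$. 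Since the Luxemburg and Orlicz norms are equivalent on $\Lambda_{\varphi,w}$, one has $(\Lambda_{\varphi,w})_a = (\Lambda_{\varphi,w}^0)_a$ as sets, so $\tilde{S}$ also vanishes on $(\Lambda_{\varphi,w}^0)_a$. Consequently $\tilde{H}|_{(\Lambda_{\varphi,w}^0)_a} = \tilde{F}|_{(\Lambda_{\varphi,w}^0)_a} = H$, so $\tilde{H}$ will serve as the desired extension, and it is regular by construction, being integration against $h \in \mathcal{M}_{\varphi_*,w}$.

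It then remains to verify norm preservation. The trivial direction $\|\tilde{H}\|_{(\Lambda_{\varphi,w}^0)^*} \geq \|H\|_{((\Lambda_{\varphi,w}^0)_a)^*}$ holds because the defining supremum on the left is over a larger unit ball. For the reverse inequality, I would compare the two Luxemburg-type formulas coming out of Theorem~\ref{Orlicz}:
\[
\|\tilde{H}\| = \|h\|_{\mathcal{M}_{\varphi_*,w}} = \inf\{\lambda > 0 : P_{\varphi_*,w}(h/\lambda) \leq 1\}, \quad \|\tilde{F}\| = \inf\{\lambda > 0 : P_{\varphi_*,w}(h/\lambda) + \|\tilde{S}\|/\lambda \leq 1\}.
\]
Any $\lambda$ admissible in the second infimum is automatically admissible in the first because $\|\tilde{S}\|/\lambda \geq 0$; hence $\|\tilde{H}\| \leq \|\tilde{F}\| = \|H\|$. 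Combined with the trivial bound this yields $\|\tilde{H}\| = \|H\|$, completing the verification.

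The sequence analogue proceeds identically, using the sequence-space version of Theorem~\ref{Orlicz} (referenced in the excerpt as a companion statement) and the equality $(\lambda_{\varphi,w})_a = (\lambda_{\varphi,w}^0)_a$. The whole argument reduces to an elementary comparison of the two norm formulas, so there is no serious obstacle once Theorem~\ref{Orlicz} is in hand; the only subtlety worth flagging is to keep straight which norm (Luxemburg vs.\ Orlicz) lives on which space — the representing element $h$ sits in $\mathcal{M}_{\varphi_*,w}$ with its Luxemburg norm even though $\Lambda_{\varphi,w}$ is carrying the Orlicz norm — so that the bound $\|\tilde{H}\| \leq \|\tilde{F}\|$ is being extracted from the correct infimum formulas.
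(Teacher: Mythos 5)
Your proof is correct, and it takes a genuinely different route from the paper's. The paper proceeds directly: it represents $H$ by integration against some $h \in \mathcal{M}_{\varphi_*,w}$ using the K\"othe duality $((\Lambda_{\varphi,w}^0)_a)^* \simeq \mathcal{M}_{\varphi_*,w}$, declares the extension $\tilde{H}(f)=\int_I fh$ on the whole space (well-defined and bounded by H\"older's inequality), and then verifies $\|\tilde H\|\le\|H\|$ by an elementary approximation argument — truncating a near-extremal $f$ in the unit ball to $f_n = |f|\chi_{\{1/n\le|f|\le n\}} \in (\Lambda^0_{\varphi,w})_a$ and applying the Monotone Convergence Theorem. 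You instead invoke Hahn-Banach first, then Theorem~\ref{Orlicz} to peel off the singular part of the extension, and finally compare the two infimum formulas $\|\tilde H\|=\inf\{\lambda : P_{\varphi_*,w}(h/\lambda)\le1\}$ and $\|\tilde F\|=\inf\{\lambda : P_{\varphi_*,w}(h/\lambda)+\|\tilde S\|/\lambda\le1\}$, noting that the second admissible set is contained in the first. Both arguments land on the same extension $\tilde H = \int_I\,\cdot\, h$; the trade-off is that your approach replaces the truncation/MCT computation with a one-line set inclusion, at the cost of invoking Hahn-Banach and the full $\|F\|$-formula from Theorem~\ref{Orlicz}, whereas the paper's proof uses only H\"older's inequality and order continuity and so is self-contained apart from the duality representation. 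One minor remark: the Hahn-Banach step is in fact dispensable in your own argument — once you know $H(f)=\int_I fh$ with $\|H\|_{((\Lambda^0_{\varphi,w})_a)^*}=\|h\|_{\mathcal{M}_{\varphi_*,w}}$, applying Theorem~\ref{Orlicz} directly to the natural extension $\tilde H$ (with $S=0$) already gives $\|\tilde H\|=\|h\|_{\mathcal{M}_{\varphi_*,w}}=\|H\|$ — but as written the proof is correct.
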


\begin{proof}
	Since the sequence case can be proven by a similar argument, we only prove for the function case. Let $H$ be a bounded linear functional on $(\Lambda_{\varphi,w}^0)_a$. Then, there exists $h \in \mathcal{M}_{\varphi_*,w}$ such that $H(f) = \int hf$ for $f \in (\Lambda_{\varphi,w}^0)_a$. Without loss of generality, assume $h \geq 0$. Denote by $\tilde{H}$ an extension of $H$ to $\Lambda_{\varphi,w}^0$. Then letting  $\tilde{H}(f) = \int_I hf$ for $f \in \Lambda_{\varphi,w}^0$, we have $|\tilde{H}(f)| = |\int_I hf| \leq \|h\|_{\mathcal{M}_{\varphi,w}}\|f\|^0$ by the H\"older's inequality, so $\tilde{H}$ is  well-defined and bounded on $\Lambda^0_{\varphi,w}$. 
	
	Now, for every $\epsilon > 0$, we can choose $f \in \Lambda_{\varphi,w}^0$ with $\|f\|_{\varphi,w}^0 \le 1$ such that $\|\tilde{H}\| - \frac{\epsilon}{2} < \int_I |hf|$. Define $f_n = |f| \chi_{\{\frac{1}{n} \leq |f| \leq n\}}$, $n\in\mathbb{N}.$ Since $(\Lambda_{\varphi,w}^0)_a = (\Lambda_{\varphi,w}^0)_b$, we see that $f_n \in (\Lambda_{\varphi,w}^0)_a$. Notice also that $f_n \uparrow |f|$ a.e., and so $\lim_{n \rightarrow \infty} \int_I h f_n = \int_I h |f|$ by the Monotone Convergence Theorem. Hence, for all $\epsilon >0$, there exists $N$ such that for every $n \geq N$, $\int_I |hf| < \int_I |hf_n| + \frac{\epsilon}{2}$, and this shows that $\|\tilde{H}\| < \int_I hf_n +\epsilon$. Since $f_n \in (\Lambda_{\varphi,w}^0)_a$ has norm less than or equal to $1$, we have $\|\tilde{H}\| \leq \|H\|$. Therefore, $\|H\| = \|\tilde{H}\|$.  
\end{proof}

\begin{Theorem}\label{ext}
Let $\varphi$ be an $N$-function. If $H$ is a bounded linear functional on $(\Lambda_{\varphi, w}^0)_a$ which attains its norm on on the unit ball of $(\Lambda_{\varphi, w}^0)_a$, then $H$ has a unique norm-preserving extension to $\Lambda_{\varphi,w}^0$, which is also regular. The analogous statement holds for $\lambda_{\varphi,w}^0$.
\end{Theorem}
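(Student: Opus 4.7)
My plan is to use the norm-attainment hypothesis to force the singular part of any norm-preserving extension to vanish, which will immediately yield both uniqueness and regularity.

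First, by Proposition \ref{existext}, $H$ admits a regular norm-preserving extension $\tilde{H}(f) = \int_I fh$, where $h \in \mathcal{M}_{\varphi_*,w}$ is the K\"othe-dual representative of $H$. Since $H$ attains its norm at some $f_0 \in (\Lambda_{\varphi,w}^0)_a$ with $\|f_0\|_{\varphi,w}^0 = 1$, we have $|\tilde H(f_0)| = |H(f_0)| = \|H\| = \|\tilde H\|$, so $\tilde H$ is norm-attaining on the unit ball of $\Lambda_{\varphi,w}^0$. Applying Theorem \ref{th:suppfunc} to $\tilde H$ (whose singular part is zero), condition~(i) produces the crucial identity
\[
P_{\varphi_*,w}\!\left(\frac{h}{\|H\|}\right) = 1.
\]

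Now let $F$ be any norm-preserving extension of $H$ to $\Lambda_{\varphi,w}^0$. By Theorem \ref{Orlicz}, write $F = H_1 + S$, where $H_1(f) = \int_I f h_1$ for some $h_1 \in \mathcal{M}_{\varphi_*,w}$ and $S$ is singular. Since $S$ vanishes on $(\Lambda_{\varphi,w}^0)_a$ and $F$ agrees with $H$ there, $\int_I fh = \int_I f h_1$ for every $f \in (\Lambda_{\varphi,w}^0)_a$; as this subspace contains all simple functions with finite-measure support, $h_1 = h$ a.e. Theorem \ref{Orlicz} then gives
\[
\|F\| = \inf\left\{\lambda > 0 : P_{\varphi_*,w}\!\left(\frac{h}{\lambda}\right) + \frac{\|S\|}{\lambda} \leq 1\right\} = \|H\|.
\]
Choosing $\lambda_n \downarrow \|H\|$ inside the above set and invoking the Monotone Convergence Theorem on $(h^*)^0/(\lambda_n w)$ (exactly as in the derivation of \eqref{welld} from the proof of Theorem \ref{th:suppfunc}), I can pass to the limit and deduce $P_{\varphi_*,w}(h/\|H\|) + \|S\|/\|H\| \leq 1$. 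Combined with the identity $P_{\varphi_*,w}(h/\|H\|)=1$, this forces $\|S\| = 0$, so $F = H_1 = \tilde H$, proving both uniqueness and regularity.

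The argument for $\lambda_{\varphi,w}^0$ is identical with Theorem \ref{th:suppseq} playing the role of Theorem \ref{th:suppfunc}. The main conceptual obstacle is locating the exact tension between the two representations of the norm: Theorem \ref{th:suppfunc}(i) saturates the Orlicz-norm inequality of Theorem \ref{Orlicz} at $\lambda = \|H\|$ precisely when no singular component is present, and this saturation leaves no slack for a nonzero $\|S\|$ in any competing norm-preserving extension. The only technical care needed is justifying that the infimum defining $\|F\|$ is actually attained in the limit $\lambda \downarrow \|F\|$, which is exactly where the monotone convergence step is required.
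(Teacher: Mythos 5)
Your proof is correct and takes essentially the same approach as the paper: both use Proposition \ref{existext} for existence, both invoke Theorem \ref{th:suppfunc} together with the norm-attainment hypothesis to deduce $P_{\varphi_*,w}(h/\|H\|)=1$, and both then apply the norm formula of Theorem \ref{Orlicz} to rule out any nonzero singular part. The only (minor) presentational difference is at the very end: the paper argues directly that $g(\lambda)=P_{\varphi_*,w}(h/\lambda)+\|S\|/\lambda$ satisfies $g(1)>1$ so that $\|H+S\|\geq\lambda_0>\|H\|$ for some $\lambda_0>1$, whereas you pass to the limit $\lambda_n\downarrow\|H\|$ via monotone convergence to conclude $g(\|H\|)\leq 1$, which contradicts $g(\|H\|)=1+\|S\|/\|H\|>1$; these are logically equivalent routes through the same monotone-continuity fact.
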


\begin{proof}
	
	The proof will be given only for function space. The existence of a norm-preserving extension of a bounded linear functional $H$ on $(\Lambda_{\varphi,w}^0)_a$ to the whole space $\Lambda_{\varphi,w}^0$, denoted by $\tilde{H}$, has been shown in Proposition \ref{existext}.
	
	First we show that this extension is unique among regular functionals. Indeed, suppose that we have another norm-preserving extension of $H$, say $\tilde{G}$. Then for $f \in (\Lambda_{\varphi,w}^0)_a$, we have $0 = H(f) - H(f) = (\tilde{H}-\tilde{G})(f)$. Since $\tilde{H}$ and $\tilde{G}$ are regular functionals, the only possibility is when $\tilde{H}=\tilde{G}$. 
	
	Now we show that none of the functionals  $H+ S$, where $H$ is a regular part and a singular part $S\neq 0$, is a norm-preserving extension. Without loss of generality, assume that $\|H\| = \|h\|_{\mathcal{M}_{\varphi_*,w}} = 1$ for some $h \in \mathcal{M}_{\varphi_*,w}$. Since $H$ attains its norm  there exists $f \in (\Lambda_{\varphi,w}^0)_a$ with $\|f\|_{\varphi,w} = 1$ such that $H(f) = \int_I hf = \|h\|_{\mathcal{M}_{\varphi_*,w}}$. In view of Theorem \ref{th:suppfunc}, we have $P_{\varphi_*,w}(h) = 1$. Now, define the function $g(\lambda) = P_{\varphi_*,w}(\frac{h}{\lambda}) + \frac{1}{\lambda} \|S\|$, $\lambda > 0$. Note that the function $g(\lambda)$ is decreasing and continuous on the interval $(1, \infty)$ and right-continuous at $\lambda = 1$. From the fact that $g(1) = P_{\varphi_*,w}(h) + \|S\| = 1 + \|S\| > 1$, there exists $\lambda_0 > 1$ such that $P_{\varphi_*,w}(\frac{h}{\lambda_0}) + \frac{1}{\lambda_0}\|S\| > 1$. But then, this implies that $\|H+S\| \geq \lambda_0 >1 = \|H\|$ by Theorem \ref{Orlicz}. Thus, if $S \neq 0$, $H+S$ is not norm-preserving, so $\tilde{H}$ is the only norm-preserving extension of $H$ to $\Lambda_{\varphi,w}^0$.     
\end{proof}

\begin{Theorem}\label{everyext}
	Let $\varphi$ be an $N$-function. The following statements are equivalent:
	\begin{enumerate}[\rm(i)]
		\item Every bounded linear functional $H$ on $(\Lambda_{\varphi,w}^0)_a$ has a unique norm-preserving extension to $\Lambda_{\varphi,w}^0$;
		\item $\varphi$ or $\varphi_*$ satisfies the appropriate $\Delta_2$-condition.
	\end{enumerate}
	The analogous statement holds for $\lambda_{\varphi,w}^0$.
\end{Theorem}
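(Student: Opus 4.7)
The approach is to prove (ii) $\Rightarrow$ (i) by splitting on which function satisfies the appropriate $\Delta_2$-condition, and (i) $\Rightarrow$ (ii) by contrapositive construction. Both directions center on the decomposition $F = H + S$ with the norm formula $\|F\| = \inf\{\lambda>0 : P_{\varphi_*,w}(h/\lambda) + \|S\|/\lambda \leq 1\}$ from Theorem \ref{Orlicz}, combined with the regular norm-preserving extension guaranteed by Proposition \ref{existext}.

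For (ii) $\Rightarrow$ (i), if $\varphi$ satisfies the appropriate $\Delta_2$-condition then $\Lambda_{\varphi,w}^0$ is order-continuous (via the Orlicz/Luxemburg norm equivalence and the order-continuity characterization of Orlicz-Lorentz spaces), so $(\Lambda_{\varphi,w}^0)_a = \Lambda_{\varphi,w}^0$ and the extension is trivially unique. The substantive sub-case is when $\varphi_*$ satisfies the appropriate $\Delta_2$-condition. Given $H \in ((\Lambda_{\varphi,w}^0)_a)^*$, Proposition \ref{existext} yields a regular norm-preserving extension $\tilde H(f) = \int_I fh$ of norm $\lambda_0 := \|h\|_{\mathcal{M}_{\varphi_*,w}}$. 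Any other norm-preserving extension has the form $F = G + S$ with $G$ regular; comparing $G$ and $\tilde H$ on simple functions of finite-measure support (which lie in $(\Lambda_{\varphi,w}^0)_a$) forces $G = \tilde H$ by K\"othe duality. To force $S = 0$, note that $h \in \mathcal{M}_{\varphi_*,w} = (\mathcal{M}_{\varphi_*,w})_a$ by Theorem \ref{Order}, so Corollary \ref{cor:normone} applied to $h/\lambda_0$ yields the sharp identity $P_{\varphi_*,w}(h/\lambda_0) = 1$, and $\lambda \mapsto P_{\varphi_*,w}(h/\lambda)$ is finite and continuous on $(0,\infty)$. Consequently for $\lambda$ in a sufficiently small right neighborhood of $\lambda_0$ the constraint $P_{\varphi_*,w}(h/\lambda) + \|S\|/\lambda \leq 1$ fails whenever $\|S\| > 0$, so Theorem \ref{Orlicz} gives $\|F\| > \lambda_0 = \|\tilde H\|$, contradicting norm-preservation.

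For (i) $\Rightarrow$ (ii), assume neither $\varphi$ nor $\varphi_*$ satisfies the appropriate $\Delta_2$-condition. Applying Lemma \ref{cor:notdelta2-P} to $\varphi_*$ produces $h \in \mathcal{M}_{\varphi_*,w}$ with $\|h\|_{\mathcal{M}_{\varphi_*,w}} = 1$ and $\alpha := P_{\varphi_*,w}(h) \leq 1/2 < 1$. Since $\varphi$ also fails the $\Delta_2$-condition, $(\Lambda_{\varphi,w}^0)_a$ is a proper closed subspace of $\Lambda_{\varphi,w}^0$, so Hahn-Banach together with Theorem \ref{Orlicz} yields a nonzero singular functional, which we rescale to $S_0$ with $\|S_0\| = 1 - \alpha$. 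Set $H(f) = \int_I fh$ on $(\Lambda_{\varphi,w}^0)_a$ with regular extension $\tilde H$ of norm $1$. Both $\tilde H$ and $\tilde H + S_0$ extend $H$; evaluating the norm formula of Theorem \ref{Orlicz} at $\lambda = 1$ gives $P_{\varphi_*,w}(h) + \|S_0\| = \alpha + (1 - \alpha) = 1$, so $\|\tilde H + S_0\| \leq 1 = \|H\| \leq \|\tilde H + S_0\|$ (the last since any extension has norm at least $\|H\|$), producing a second norm-preserving extension distinct from $\tilde H$.

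The main obstacle I anticipate is securing the strict inequality $\|F\| > \|\tilde H\|$ in the second sub-case of (ii) $\Rightarrow$ (i); it rests delicately on combining the exact identity $P_{\varphi_*,w}(h/\lambda_0) = 1$ from Corollary \ref{cor:normone} with the continuity of $\lambda \mapsto P_{\varphi_*,w}(h/\lambda)$, both properties delivered by $\varphi_*$ satisfying the appropriate $\Delta_2$-condition through Theorem \ref{Order}. The sequence-space version proceeds identically, with $P_{\varphi_*,w}$ replaced by $\mathfrak{p}_{\varphi_*,w}$ and invocation of the sequence analogues of Lemma \ref{cor:notdelta2-P}, Theorem \ref{Order}, Corollary \ref{cor:normone}, and Theorem \ref{Orlicz}.
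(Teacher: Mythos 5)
Your proposal is correct and follows essentially the same route as the paper's proof: split on which of $\varphi$, $\varphi_*$ satisfies the $\Delta_2$-condition, use Proposition~\ref{existext} for the canonical regular extension, and use the norm formula of Theorem~\ref{Orlicz} to show that adding a nonzero singular part strictly increases the norm (when $\varphi_*$ is $\Delta_2$) or can leave it unchanged (when both fail $\Delta_2$). The only cosmetic deviations are that you invoke Lemma~\ref{cor:notdelta2-P} where the paper cites Lemma~\ref{cor:notdelta2} (both deliver the needed $\|h\|_{\mathcal{M}_{\varphi_*,w}}=1$ with $P_{\varphi_*,w}(h)<1$), and you obtain the identity $P_{\varphi_*,w}(h/\lambda_0)=1$ by applying Corollary~\ref{cor:normone} via Theorems~\ref{Sep} and~\ref{Order}, whereas the paper reaches the same conclusion by a short continuity-plus-contradiction argument; you also spell out the uniqueness of the regular part, which the paper handles in Theorem~\ref{ext} and treats implicitly here.
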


\begin{proof}
	Since the proof for the sequence case is almost identical, we only provide the proof for the function space. 
	
	(i)$\implies$(ii) Assume that $\varphi$ and $\varphi_*$ do not satisfy the appropriate $\Delta_2$-condition. By the first assumption, there is a bounded linear functional $F = H+S$ on $\Lambda_{\varphi,w}^0$, where $S \neq 0$,  and $H$ is a regular functional. Now, from the fact that $\varphi_*$ does not satisfy $\Delta_2$-condition, we can choose $H$ generated by $h \in \mathcal{M}_{\varphi_*,w}$ such that $\|H\| = \|h\|_{\mathcal{M}_{\varphi_*,w}} =1$ and $P_{\varphi_*,w}(h) < 1$ by Lemma \ref{cor:notdelta2}. Choose $S \neq 0$ such that $\|S\| = 1 - P_{\varphi_*,w}(h)$. Then we obtain $P_{\varphi_*,w}(h) + \|S\| =1$. Define the function $f(\lambda) = P_{\varphi_*,w}(\frac{h}{\lambda}) + \frac{1}{\lambda}\|S\|$, $\lambda> 0$. The function $f(\lambda)$ is strictly decreasing, continuous on $(1, \infty)$, right-continuous at $\lambda = 1$, and $f(1) = 1$. In view of Theorem \ref{Orlicz}, observe that $1 \geq f(\|H+S\|) \geq f(1) = f(\|H\|)= 1$. Hence we have $\|H+S\| = \|H\|$. Let $\tilde{H}$ be an extension of a bounded linear functional $H$ on $(\Lambda_{\varphi,w}^0)_a$ to the whole space, as given in Proposition \ref{existext}. Then $\|H + S\| = \|H\| = \|\tilde{H}\|$. However, $H + S \neq \tilde{H}$ because $S \neq 0$, so we have two distinct norm-preserving extensions of $H$ in this case. 
	
	(ii)$\implies$(i) If $\varphi$ satisfies the appropriate $\Delta_2$-condition, we have $(\Lambda_{\varphi,w}^0)_a = \Lambda_{\varphi,w}^0$, so there is nothing to prove \cite{K}. Hence we only consider when $\varphi$ does not satisfy appropriate $\Delta_2$-condition but $\varphi_*$ does. Assume that $H$ is a bounded linear functional on $(\Lambda_{\varphi,w}^0)_a$ such that $\|H\| =\|h\|_{\mathcal{M}_{\varphi_*,w}} = 1$. Let $\tilde{H}$ be an extension of $H$ to $\Lambda_{\varphi,w}^0$ as given in Proposition \ref{existext}. From such an extension, we also have $\|\tilde{H}\| = \|h\|_{\mathcal{M}_{\varphi_*,w}} =1$. Now, we show that $P_{\varphi_*,w}(h) = 1$. Assume for the contrary that $P_{\varphi_*,w}(h) < 1$. Define a function $g(\lambda) = P_{\varphi_*,w}(\frac{h}{\lambda})$. In view of Theorem \ref{Sep} and from the fact that  $\varphi_*$ satisfies the appropriate $\Delta_2$-condition, the function $g$ is continuous on $(0, \infty)$. Note that $g(\lambda)$ is a strictly decreasing function. Then there exists $\lambda_0 < 1$ such that $P_{\varphi_*,w}(\frac{h}{\lambda_0}) =1$, which is a contradiction to the fact that $\|h\|_{\mathcal{M}_{\varphi,w}} = 1$. 
	
	If $S \neq 0$, we have $1< P_{\varphi_*,w}(h) + \|S\| = 1 + \|S\|$. The function $f(\lambda) = P_{\varphi_*,w}\left(\frac{h}{\lambda}\right) + \frac{1}{\lambda}\|S\|$ is continuous on $(0, \infty)$ by Theorem \ref{Sep} and by the fact that $\varphi_*$ satisfies the appropriate $\Delta_2$-condition. Since $f(1) >1$, there exists $\lambda_0 > 1$ such that $P_{\varphi_*,w}\left(\frac{h}{\lambda_0}\right) + \frac{1}{\lambda_0}\|S\| > 1$. But then, this implies $\|H + S\| \geq \lambda_0 > 1 = \|H\|$  by Theorem \ref{Orlicz}. Thus, $\tilde{H}$ is the only norm-preserving extension of $H$ to $\Lambda_{\varphi,w}^0$.
\end{proof}

\end{document}